\documentclass[11pt]{article}

\usepackage{amsmath, amssymb, amsthm}   
\usepackage{mathrsfs}                  
\usepackage{graphicx}                  
\usepackage{enumitem}                  
\usepackage{hyperref}                  
\usepackage{xcolor,tikz}                    

\usepackage{lmodern}                  
\usepackage[T1]{fontenc}
\usepackage[utf8]{inputenc}

\newtheorem{theorem}{Theorem}[section]

\newtheorem{definition}[theorem]{Definition}
\newtheorem{proposition}[theorem]{Proposition}
\newtheorem{lemma}[theorem]{Lemma}

\title{Universal Cycles for Affine Planes and 3-Subspaces over Finite Fields}
\author{Yu Hsuan Hsieh, Ming-Hsuan Kang}

\def \Gr{\mathrm{Gr}}

\begin{document}

\maketitle

\begin{abstract}
We construct universal cycles for affine planes in \(\mathbb F_q^n\) for
all prime powers \(q\) and all \(n\ge4\), using sliding windows of length
three. The construction is local-to-global: explicit local cycles are built
on frame configurations, the linear \(2\)-subspaces are organized by a
layered frame decomposition, and the resulting cycles are assembled by
gluing along shared directions. The universal cycle obtained has direction
set containing all \(1\)-subspaces. We also extend the construction to
universal cycles for \(3\)-subspaces of \(\mathbb F_q^n\).
\end{abstract}

\section{Introduction}

Universal cycles give compact cyclic encodings of large combinatorial
families. Given a finite collection $\mathcal{F}$, a \emph{universal cycle}
is a cyclic sequence whose consecutive windows encode every element of
$\mathcal{F}$ exactly once. Such cycles are cyclic analogues of exhaustive
listings and are closely related to Gray codes, de Bruijn sequences, and
Hamiltonian cycles in suitable transition graphs.

The theory of universal cycles was initiated by
Chung, Diaconis, and Graham~\cite{CDG92} and is closely related to the
broader theory of combinatorial Gray codes.  In this language, one seeks
cyclic listings of combinatorial objects in which consecutive objects differ
by a controlled local move.  This point of view has been developed for many
classes of discrete structures; see the surveys of Savage~\cite{Sav97} and
M\"utze~\cite{Mut23}.  Representative universal-cycle results include
constructions for permutations, multisets, and other combinatorial families;
see, for example, \cite{HJZ09, Joh09, Knu11}.

For vector spaces over finite fields, related Gray-code questions arise on
projective spaces and Grassmann graphs, with connections to subspace coding
and network coding; see Schwartz~\cite{Sch14}.  In this geometric setting,
a fundamental result of Jackson~\cite{Jac09} constructs universal cycles
for $2$-dimensional subspaces of $\mathbb{F}_q^n$ via a recursive
decomposition, while more recent algebraic constructions exploit the
multiplicative structure of finite fields~\cite{CKH25}.

The affine setting considered here differs from the linear Grassmannian
setting in an essential way.  A linear $2$-subspace is determined by its
directions, whereas an affine plane has both a direction space and a
translation parameter.  Thus the construction must simultaneously control
Grassmannian adjacency among directions and compatibility among parallel
classes.  This is the source of the local frame constructions, translation
lifting, and global gluing developed below.

In recent work~\cite{KangChou26}, the authors studied universal cycles for
affine lines in $\mathrm{AG}(n,q)$. That work introduced a projective
extension framework incorporating points at infinity to encode directions,
and developed a construction based on pairwise and triple configurations
combined with lifting and gluing arguments. This approach provides a
flexible mechanism for handling the parallelism inherent in affine
geometry.

This paper extends the affine-line picture to affine planes. In dimension
two, the interaction between parallel classes and affine parameters is more
intricate, and the local configurations must be organized more carefully.

\medskip
\noindent\textbf{Universal cycles for affine planes.}
Our main object is the family of affine planes in \(\mathbb F_q^n\). An
affine plane
\[
w+P,\qquad P=\langle u_1,u_2\rangle,
\]
is encoded by a sliding window of length three. A typical window has the
form
\[
(\,w,\,[u_1],\,[u_2]\,),
\]
where $[u]$ denotes the direction spanned by a nonzero vector
$u$, i.e.\ the corresponding $1$-dimensional subspace.

The local constructions also use equivalent windows in which some direction
entries are replaced by affine points, such as
\[
(\,w,\; w+u_1,\; [u_2]\,)
\quad\text{or}\quad
(\,w,\; w+u_1,\; w+u_2\,).
\]
This mixture of affine points and directions is the triple-window language
used throughout the paper.

\medskip
\noindent\textbf{Local construction via frames.}
The construction begins locally. In Section~3, the linear parts are
organized into frames: three-frames, four-frames, and a cyclic \(9\)-frame
in the characteristic-\(3\) case. Anchor parametrizations lift these linear
configurations to families of affine planes. The local cycles are built so
that shared directions become gluing interfaces; in particular, for
directions \([u]\) in the relevant direction set, the cycles contain
segments of the form
\[
[u]\to v,
\]
or the reverse orientation, with \(v\) an affine point.

\medskip
\noindent\textbf{Layered and global construction.}
The local pieces are then organized by layers. Section~4 decomposes the
linear \(2\)-subspaces into layers \(\Gr_k(2,V)\). Within each layer, pair
and triple constructions produce connected frame decompositions whose
direction sets contain all directions in \(V_k\), including the affine
directions arising from the parameters \(a+e_k\). Translation lifting
extends base decompositions to whole parallel classes.

Finally, Section~5 assembles the layer-wise decompositions. The frame graph
records when local cycles share a direction, and the connected gluing
principle joins the corresponding cycles. For \(q>2\), this is carried out
through the global frame decomposition. For \(q=2\), the construction uses
the layered decompositions for \(k\ge4\) together with an explicit cycle in
the small case \(\mathbb F_2^3\).

\medskip
\noindent\textbf{Main result.}
\begin{theorem}\label{thm:affine_ucycle}
For every prime power $q$ and every $n \ge 4$, there exists a universal
cycle for affine planes in $\mathbb{F}_q^n$ whose direction set contains
all $1$-subspaces.
\end{theorem}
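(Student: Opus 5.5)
The plan follows the local-to-global scheme sketched in the introduction. The universal cycle is viewed as an Eulerian-type object, a closed walk in a transition digraph. Its vertices are the symbols: affine points of $\mathbb F_q^n$ and directions $[u]$. Its edges are length-three windows, each encoding one affine plane $w+P$. We first partition the set of all affine planes into families indexed by \emph{frames}. A frame is a small configuration of linear $2$-subspaces (three-frames, four-frames, and the cyclic $9$-frame when $\mathrm{char}=3$), lifted to affine planes via anchor parametrizations. For each frame we build an explicit local cycle whose windows cover exactly the affine planes of that family, each once. We also arrange that every direction $[u]$ of the frame occurs in a segment $[u]\to v$ (or its reverse) with $v$ an affine point. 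These segments are the gluing interfaces. The key local fact to verify is that the windows $(w,[u_1],[u_2])$, $(w,w+u_1,[u_2])$ and $(w,w+u_1,w+u_2)$ all determine the same affine plane. Counting then shows that each family is hit bijectively.

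Next we cover all linear $2$-subspaces by frames, organized by layers. Fix the flag $V_k=\langle e_1,\dots,e_k\rangle$ and let $\Gr_k(2,V)$ be the set of $2$-subspaces contained in $V_k$ but not in $V_{k-1}$. Each such subspace contains a direction of the form $[a+e_k]$ with $a\in V_{k-1}$. Using pair and triple constructions, we partition $\Gr_k(2,V)$ into frames so that two properties hold. First, the frame graph restricted to the layer is connected, where frames are adjacent when they share a direction. Second, the union of direction sets contains every direction in $V_k$. Translation lifting then extends each base frame decomposition to all parallel classes: we translate by coset representatives of the direction space and check that the local cycles for different translates remain disjoint as sets of planes and still expose the shared-direction interfaces.

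Finally we glue. The connected gluing principle says the following. Take two edge-disjoint closed walks that both pass through a direction vertex $[u]$ in a segment $[u]\to v$. Splicing them at $[u]$ produces one closed walk whose windows are the union of the two sets of windows. For this to work, we must check that the windows straddling the splice point still encode planes already counted. That is exactly why the interface is the specific shape $[u]\to v$: the windows around the splice are $(x,[u],v)$-type, and these are reassigned consistently. We then show that the global frame graph is connected. Within each layer this was arranged above. Consecutive layers are connected because $V_{k-1}\subset V_k$, so directions of $V_{k-1}$ appear in frames of layer $k$. Splicing along a spanning tree then yields a single universal cycle, and its direction set contains all $1$-subspaces by the layer property for $k=n$.

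The main obstacle is the case $q=2$. There the three- and four-frames degenerate, because each direction has only one nonzero vector, and the lowest layers do not produce enough frames to connect. For $q=2$ we use the layered decompositions only for $k\ge4$ and treat $\mathbb F_2^3$ directly with an explicit cycle: a hand- or computer-found cycle through all $14$ affine planes of $\mathbb F_2^3$ that contains interfaces at each of its $7$ directions. We then glue this cycle to the layer-$4$ frames. This is the reason for the hypothesis $n\ge4$. I expect the most delicate verification to be that translation lifting preserves the exactly-once property while keeping interfaces available after previous splices, that is, that no direction interface is consumed twice along the spanning tree. I would handle this by choosing, for each frame, distinct occurrences of $[u]\to v$ for each tree edge, which requires the local cycles to contain multiple copies of such segments.
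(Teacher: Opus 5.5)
Your architecture (local cycles on three-, four- and $9$-frames, connected layer-wise frame partitions with full direction sets, gluing along shared directions, and an explicit cycle for $q=2$) is the paper's. The bottom of the layer filtration, however, is not handled.

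For $q>2$ you partition every $\Gr_k(2,V)$ into frames. This is impossible for $k=2$: $\Gr_2(2,V)=\{V_2\}$ is a single plane and a frame has at least three, so the parallel class $\mathcal A(V_2)$ is never encoded. (Theorem~\ref{thm:index-k-frame-decomposition} is in any case only available for $k\ge 4$.) The paper treats $\Gr_2\sqcup\Gr_3$ as one block (Proposition~\ref{prop:lower_layers_q_gt_2}). It reserves the classes of $\langle e_1\rangle,\langle e_2\rangle$ and rebuilds their pair construction so that $V_2$ lands in a frame. For odd $q$ it trades $\mathcal T_+\cup\mathcal T_-$ for the three-frame $\{\langle e_1,e_3\rangle,\langle e_2,e_3\rangle,\langle e_1,e_2\rangle\}$ plus the four-frame $\mathcal Q_{a_0}$ with $a_0=e_1+e_2$. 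For even $q>2$ it trades $\mathcal Q_0\cup\mathcal Q_{\alpha a_0}\cup\{V_2\}$ for three explicit three-frames. Connectivity is then automatic, since every such frame lies in $V_3$ and $\mathcal S_4$ contains all directions of $V_4$.

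For $q=2$ the omission is larger. Your explicit cycle encodes only the $14$ affine planes contained in $V_3$. There are $7\cdot 2^{n-2}$ affine planes with linear part in $V_3$, and those of the form $b+P$ with $b\notin V_3$ are encoded neither by your cycle nor by the layers $k\ge4$. You must translate the cycle to all $2^{n-3}$ cosets $b+V_3$ and glue the copies together. Translation moves every affine point, so the copies share no interface of the form $[u]\to v$. The cycle must therefore contain a translation-invariant interface, namely a consecutive direction--direction pair; the paper's $C_0$ contains $[e_2]\to[e_1+e_3]$, and all translates are glued along it. One copy keeps $[e_1]\to 0$ for attaching to layer~$4$.

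Your description of the gluing step also needs correcting, and correcting it removes the obstacle you anticipate at the end. Splicing two cycles at a single symbol $[u]$ produces new windows $(x,[u],v')$ that belong to neither cycle and need not encode the right planes. You should instead splice along a common consecutive \emph{pair} $([u],v)$, as in Lemma~\ref{lem:gluing_cycle}. A common pair is obtained by translating, and if necessary reversing, a cycle whose encoded family is translation-invariant, e.g.\ the incoming local cycle, which encodes full parallel classes (Lemma~\ref{lem:translate_cycle}). Reversal is harmless because the affine span of a window does not depend on the order of its entries. After splicing along a pair, every consecutive pair of both cycles survives in the result, and the shared pair now occurs twice. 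So interfaces are never consumed, and no multiple copies of interface segments are needed along the spanning tree.
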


\medskip
\noindent\textbf{Connections and extensions.}
The affine-plane construction also fits the usual hyperplane
decomposition: affine planes in \(\mathbb F_q^n\) correspond to
\(2\)-subspaces of \(\mathbb F_q^{n+1}\) not contained in a fixed
hyperplane. Using this viewpoint, Section~5 extends the construction to
\(3\)-subspaces.

\begin{theorem}\label{thm:3_subspace_ucycle}
For every prime power $q$ and every $n \ge 4$, there exists a universal
cycle for $3$-subspaces in $\mathbb{F}_q^n$ whose direction set contains
all $1$-subspaces.
\end{theorem}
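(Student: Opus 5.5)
The plan is to reduce Theorem~\ref{thm:3_subspace_ucycle} to the affine-plane theorem through the hyperplane decomposition, and then assemble the pieces with the connected gluing principle. Fix a hyperplane $H\cong\mathbb F_q^{n-1}$ of $V=\mathbb F_q^n$ and a vector $e\notin H$. Every $3$-subspace $W\le V$ satisfies $\dim(W\cap H)\ge 2$, so the family of $3$-subspaces splits into two classes:
\[
\Gr(3,V)=\Gr(3,H)\ \sqcup\ \{W:\dim(W\cap H)=2\}.
\]
A subspace $W$ in the second class meets the affine hyperplane $e+H$ in an affine plane $w+P$ with $P=W\cap H$. This gives a bijection with the affine planes of $e+H\cong\mathbb F_q^{n-1}$. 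In window language, an affine window $(w,[u_1],[u_2])$ encodes the $3$-subspace $\langle w,u_1,u_2\rangle$, with $w\in e+H$. Treating $w$ as a direction $[w]$ turns every window into a window of three directions spanning $W$, which is exactly the window format for $3$-subspaces.

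\textbf{Step 1 (the non-$H$ part).} When $n-1\ge 4$, apply Theorem~\ref{thm:affine_ucycle} to $e+H$. The result is a cycle $C_1$ that encodes exactly the $3$-subspaces not contained in $H$. Each affine point $w$ becomes the projective point $[w]$, and every direction $[u]$ with $u\in H$ occurs in $C_1$. Applying a linear map that fixes $H$ lets us assume the various segments $[u]\to v$ are available for gluing.

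\textbf{Step 2 (the $H$ part).} Recurse on $\Gr(3,H)$ to get a cycle $C_2$ whose direction set contains all $1$-subspaces of $H$.

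\textbf{Step 3 (gluing).} Choose a direction $[u]$ with $u\in H$ that occurs in both cycles, with compatible orientations $[u]\to v$ in one cycle and the reverse in the other. Cut both cycles at these segments and splice them together. The splicing must change no window's underlying subspace, so both cycles need to contain local segments of the same shape. I will invoke the connected gluing principle from Section~5 in exactly the form used for affine planes.

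\textbf{Base cases and the main obstacle.} The recursion bottoms out at $n=4$, where $H=\mathbb F_q^3$. There $\Gr(3,H)$ is the single subspace $H$, and it must be inserted as one extra window inside $C_1$. I would do this by finding in $C_1$ a window run of the form $[a],[b],[c]$ with $a,b,c\in H$ spanning $H$, or by locally modifying a frame cycle to create one. For $n=4$, Step 1 also needs affine planes in $\mathbb F_q^3$, which lies outside the range $n\ge4$ of Theorem~\ref{thm:affine_ucycle}. I would handle this with the layered frame decompositions of Section~4 restricted to $V_3$, together with the explicit $\mathbb F_2^3$ cycle when $q=2$. A cleaner alternative is to dualize: for $n=4$, $3$-subspaces correspond to $1$-subspaces, so a de Bruijn-type cyclic listing of points can be converted into windows.

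I expect the main obstacle to be Step 3, together with the closely related final claim about the direction set. The directions $[w]$ with $w\notin H$ come automatically from the affine points of $C_1$. The directions in $H$ require the affine theorem's guarantee. The hard part is making the junction windows in the spliced cycle encode genuinely new $3$-subspaces without duplicates. My first attempt is to perform the splice at a window $([u],v,\cdot)$ and choose $v$ so that the two junction windows exactly replace the two windows that were cut.
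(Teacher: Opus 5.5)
Your overall architecture matches the paper's proof. That covers the split of $3$-subspaces by a hyperplane $H$, the identification of the part off $H$ with affine planes of $e+H$ via $w\mapsto[w]$, and the induction from $n-1\ge4$ using Theorem~\ref{thm:affine_ucycle}. However, the gluing step as you set it up cannot work, and the base case is not actually established.

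\textbf{The gluing step.} Every entry of $C_2$ is a $1$-subspace of $H$. In a segment $[u]\to v$ of $C_1$, the entry $v$ becomes $[v]$ with $v\in e+H$. So no such pair is common to both cycles, yet Lemma~\ref{lem:gluing_cycle} needs an identical consecutive pair. The connected gluing principle does not rescue this. Its matching step translates affine points, and translating $e+H$ by $a\in H$ is the transvection $x\mapsto x+\phi(x)a$ (with $\ker\phi=H$ and $\phi(e)=1$). That map fixes $H$ pointwise, so it does nothing to $C_2$. Your fallback of splicing at a single shared entry replaces the windows $(x,[u],y)$ and $(x',[u],y')$ by $(x,[u],y')$ and $(x',[u],y)$. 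You give no reason these encode the same two subspaces.

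The missing idea is to use as interface a consecutive pair of two \emph{linear} directions $([u],[u'])$ with $u,u'\in H$.
\begin{itemize}
\item $C_1$ contains such pairs. The Section~3 blocks have adjacent direction entries; for example, $B_i(w)$ begins $(P_i\cap P_{i+1})\to(P_{i+1}\cap P_{i+2})\to w$. Gluing preserves all consecutive pairs.
\item $C_2$ consists entirely of such pairs. Consecutive entries inside a window spanning a $3$-space are independent.
\item $\mathrm{GL}(H)$, extended by fixing $e$, preserves both families and is transitive on ordered pairs of independent lines of $H$.
\end{itemize}
So you can move $C_2$ until its pair coincides with one in $C_1$, then apply Lemma~\ref{lem:gluing_cycle}. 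This is exactly how the paper closes the induction.

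\textbf{The base case $n=4$.} Your first route fails as stated. $C_1$ is universal for the $3$-subspaces not contained in $H$, so it cannot contain a run $[a],[b],[c]$ spanning $H$, since that window would encode $H$ itself. ``Locally modifying a frame cycle'' is left unspecified. This route also needs an affine-plane cycle in $\mathbb F_q^3$, which lies outside Theorem~\ref{thm:affine_ucycle}.

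The duality remark is not yet an argument. What you need is a cyclic ordering of the $N=(q^4-1)/(q-1)$ points whose $N$ consecutive triples span each hyperplane exactly once. The paper obtains this from a Singer cycle:
\begin{itemize}
\item Take $\alpha$ primitive in $\mathbb F_{q^4}$ and the sequence $[\alpha^i]$, $0\le i<N$. The windows are $\alpha^i\langle 1,\alpha,\alpha^2\rangle$.
\item The multiplicative stabilizer of $\langle 1,\alpha,\alpha^2\rangle$ is $\mathbb F_q^\times$. A larger stabilizer would give an intermediate field $K$ with $[K:\mathbb F_q]\in\{2,4\}$ over which a $3$-dimensional $\mathbb F_q$-space is a $K$-space, which is impossible.
\item Hence the $N$ windows are exactly the $N$ hyperplanes. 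Every point appears, and a consecutive pair such as $([1],[\alpha])$ supplies the interface for the induction.
\end{itemize}
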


\medskip
\noindent\textbf{Structure of the paper.}
Section~2 introduces the basic tools: anchor sets, translation, and
gluing. Section~3 develops local cycles on frame configurations. Section~4
constructs connected frame decompositions within each layer
\(\Gr_k(2,V)\). Section~5 assembles the layers, proves the affine-plane
universal cycle theorem, and extends the construction to \(3\)-subspaces.

\medskip
\noindent\textbf{Acknowledgment.}
Portions of the exposition were prepared with the assistance of ChatGPT.
The authors also performed computer-assisted checks of the local frame
constructions and gluing procedures for small prime fields and dimensions;
these checks were used to detect possible indexing errors and to confirm
the behavior of the exceptional cases.

\section{Preliminaries}

Let $V=\mathbb{F}_q^n$. Throughout the paper, we abbreviate
``$k$-dimensional subspace'' to \emph{$k$-subspace}.

\subsection{Affine Planes and Parallel Classes}

We begin with the affine objects used throughout the paper. For a linear
subspace $U\subset V$ of dimension $d\in\{1,2\}$, define its associated
\emph{parallel class} by
\[
\mathcal A(U)
=
\{\,x+U : x\in V\,\},
\]
the set of all affine subspaces with linear part $U$.

We refer to the cases $d=1$ and $d=2$ as \emph{affine lines} and
\emph{affine planes}, respectively.

\subsection{Anchor Sets and Parametrization}

Anchor sets provide canonical representatives for the affine subspaces in
a fixed parallel class.

\begin{definition}
Let $U\subset V$ be a linear subspace. A subset $R\subset V$ is called an
\emph{anchor set} for $\mathcal A(U)$ if every affine subspace
$A\in \mathcal A(U)$ intersects $R$ in exactly one point.
\end{definition}

Thus an anchor set provides a canonical \emph{anchor} for each affine
subspace in the parallel class, giving a parametrization of
\(\mathcal A(U)\).

We first record the resulting parametrization.

\begin{lemma}[Anchor parametrization]\label{lem:anchor_param}
Let $U\subset V$ and let $R$ be an anchor set for $\mathcal A(U)$. Then every
$A\in \mathcal A(U)$ can be written uniquely as
\[
A = r + U
\qquad \text{with } r\in R.
\]
Equivalently, the map
\[
R \longrightarrow \mathcal A(U),
\qquad r \longmapsto r+U,
\]
is a bijection.
\end{lemma}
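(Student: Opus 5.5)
The plan is to show directly that the map $\phi\colon R\to\mathcal A(U)$, $r\mapsto r+U$, is well defined, surjective, and injective. Uniqueness of the representation $A=r+U$ with $r\in R$ is then exactly injectivity, and existence is exactly surjectivity. The only input is the defining property of an anchor set: every $A\in\mathcal A(U)$ meets $R$ in exactly one point. We also use the elementary fact that two cosets of $U$ are either equal or disjoint, and that $x+U=y+U$ if and only if $x-y\in U$.

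Well-definedness is immediate, since $r+U$ is a coset of $U$ and so lies in $\mathcal A(U)$. For surjectivity, take $A\in\mathcal A(U)$. By the anchor property $A\cap R=\{r\}$ for a unique point $r$. Since $r\in A$ and $A$ is a coset of $U$, we get $A=r+U$, because any coset equals the translate of $U$ by any of its points.

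For injectivity, suppose $r,r'\in R$ satisfy $r+U=r'+U=:A$. Then both $r$ and $r'$ lie in $A\cap R$, using $0\in U$. The anchor property says $|A\cap R|=1$, so $r=r'$. This also shows that the representative $r$ found in the surjectivity step is the unique one, which gives the equivalent bijection formulation.

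I do not expect a genuine obstacle here. The one point that needs explicit justification is the coset identity $A=r+U$ whenever $r\in A\in\mathcal A(U)$. To verify it, write $A=x+U$ and $r=x+u_0$ with $u_0\in U$; then $r+U=x+u_0+U=x+U$ because $u_0+U=U$.
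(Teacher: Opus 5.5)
Your proof is correct. Surjectivity comes from the existence part of the anchor property together with the coset identity $A=r+U$ for $r\in A$, and injectivity comes from $|A\cap R|=1$. The paper states this lemma without proof, treating it as an immediate unpacking of the definition of an anchor set, and your argument is exactly that unpacking written out.
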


The main source of anchor sets is a direct-sum complement.

\begin{lemma}\label{lem:linear_anchor}
Suppose $V=U\oplus W$. Then $W$ is an anchor set for $\mathcal A(U)$.
\end{lemma}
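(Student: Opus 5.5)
We must show that if $V=U\oplus W$, then every affine subspace $A=x+U$ in $\mathcal A(U)$ meets $W$ in exactly one point. The argument splits into existence and uniqueness of an intersection point, and both come straight from the direct-sum decomposition.

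For existence, take any $A\in\mathcal A(U)$ and write it as $A=x+U$ with $x\in V$. Since $V=U\oplus W$, decompose $x=u+w$ with $u\in U$ and $w\in W$. Then
\[
w = x-u \in x+U = A,
\]
so $w\in A\cap W$.

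For uniqueness, suppose $w,w'\in A\cap W$. Both lie in the same coset $x+U$, so $w-w'\in U$. Both also lie in the subspace $W$, so $w-w'\in W$. Hence
\[
w-w'\in U\cap W=\{0\},
\]
which gives $w=w'$.

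Therefore $W$ meets each member of $\mathcal A(U)$ in exactly one point, which is the definition of an anchor set. Combined with Lemma~\ref{lem:anchor_param}, the map $w\mapsto w+U$ is a bijection $W\to\mathcal A(U)$. There is no real obstacle here. The only point needing care is to use both halves of the direct-sum hypothesis: $V=U+W$ gives existence, and $U\cap W=0$ gives uniqueness.
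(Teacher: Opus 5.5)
Your proof is correct: $V=U+W$ gives a point of $W$ in each coset $x+U$, and $U\cap W=\{0\}$ makes that point unique. The paper states this lemma without proof, treating it as immediate, and your argument is exactly the standard one it implicitly relies on.
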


\subsection{Translation and Symmetry}

Translation is the basic symmetry of parallel classes. For any
$a\in V$ and any affine subspace $x+U$, we have
\[
(x+U)+a = (x+a)+U \in \mathcal A(U).
\]

It also preserves anchor sets.

\begin{lemma}[Translation of anchor sets]\label{lem:affine_anchor}
Let $R$ be an anchor set for $\mathcal A(U)$ and let $a\in V$. Then
\[
R+a := \{r+a : r\in R\}
\]
is also an anchor set for $\mathcal A(U)$.
\end{lemma}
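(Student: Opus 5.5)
We must prove the translation lemma: if $R$ is an anchor set for $\mathcal A(U)$ and $a\in V$, then $R+a$ is also an anchor set. The plan is to reduce the intersection count for $R+a$ to the intersection count for $R$, using the fact that translation by $a$ is a bijection of $V$ which permutes $\mathcal A(U)$.

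First, fix an arbitrary $A\in\mathcal A(U)$, say $A=x+U$. By the displayed identity preceding the lemma, $A-a=(x-a)+U$ again lies in $\mathcal A(U)$. The map $\tau_a\colon V\to V$, $v\mapsto v+a$, is a bijection with inverse $\tau_{-a}$. Hence it carries $A-a$ onto $A$ and $R$ onto $R+a$.

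Second, since a bijection preserves intersections, we get
\[
A\cap(R+a)=\tau_a(A-a)\cap\tau_a(R)=\tau_a\bigl((A-a)\cap R\bigr).
\]
Because $R$ is an anchor set and $A-a\in\mathcal A(U)$, the set $(A-a)\cap R$ consists of exactly one point $r$. Therefore $A\cap(R+a)=\{r+a\}$, which is again exactly one point. Since $A$ was arbitrary, this is precisely the defining condition for $R+a$ to be an anchor set for $\mathcal A(U)$.

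There is no genuine obstacle here. The only point needing care is the membership $A-a\in\mathcal A(U)$, that is, closure of the parallel class under translation by $-a$, which is the identity $(x+U)+(-a)=(x-a)+U$. Combined with Lemma~\ref{lem:anchor_param}, the argument also shows that the new parametrization is $r+a\mapsto (r+a)+U$, which is simply the old one composed with translation. This is the form in which the lemma will be used for translation lifting.
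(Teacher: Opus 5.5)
Your proof is correct and follows essentially the same route as the paper, which states this lemma without a written proof and relies only on the displayed identity $(x+U)+a=(x+a)+U\in\mathcal A(U)$. You make explicit what the paper leaves implicit: translation by $a$ is a bijection of $V$ that permutes $\mathcal A(U)$, so $A\cap(R+a)=\bigl((A-a)\cap R\bigr)+a$ is a single point.
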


Thus anchor parametrizations may be translated freely. This flexibility
will be used to align anchor sets across different parallel classes.

\medskip

The same symmetry applies to universal cycles.

\begin{lemma}[Translation of universal cycles]\label{lem:translate_cycle}
Let $\mathcal{A}$ be a translation-invariant collection of affine planes,
and let $\mathcal U$ be a universal cycle on $\mathcal{A}$. For any
$a\in V$, define $\mathcal U+a$ by translating all affine-point entries
while leaving direction entries unchanged. Then $\mathcal U+a$ is also a
universal cycle on $\mathcal{A}$.
\end{lemma}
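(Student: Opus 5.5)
The plan is to check directly that translation by $a$ is a bijection compatible with the window encoding. The relevant map is $\tau_a$, which sends an affine point $x$ to $x+a$ and fixes every direction entry $[u]$. First I will show that $\tau_a$ acts on a single window in the way we expect. A window either contains at least one affine point, as in $(w,[u_1],[u_2])$, $(w,w+u_1,[u_2])$ or $(w,w+u_1,w+u_2)$ up to cyclic rearrangement of the entry types, or it consists only of directions. For windows with an affine point, the plane it encodes is $w+\langle u_1,u_2\rangle$, where $w$ is an affine point of the window and the $u_i$ are either the given direction vectors or the differences of affine points. Applying $\tau_a$ shifts every affine point by $a$. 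Differences of affine points are unchanged, and so are direction entries. So the translated window encodes $(w+a)+\langle u_1,u_2\rangle$, which is the translate of the original plane. Windows made only of directions do not encode affine planes; if the paper's convention allows them at all, they are left untouched by $\tau_a$, so their status does not change either.

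Second, I will move from single windows to the whole cycle. Write $\mathcal U=(s_0,\dots,s_{N-1})$ cyclically. Then $\mathcal U+a=(\tau_a s_0,\dots,\tau_a s_{N-1})$, and its $i$-th window is $\tau_a$ applied entrywise to the $i$-th window of $\mathcal U$. By the first step, the plane encoded by window $i$ of $\mathcal U+a$ is $A_i+a$, where $A_i$ is the plane encoded by window $i$ of $\mathcal U$. Since $\mathcal U$ is a universal cycle on $\mathcal A$, the map $i\mapsto A_i$ is a bijection onto $\mathcal A$. The map $A\mapsto A+a$ is a bijection of $\mathcal A$, because $\mathcal A$ is translation-invariant and translation by $-a$ inverts it. Composing the two gives that $i\mapsto A_i+a$ is again a bijection onto $\mathcal A$. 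Hence every plane of $\mathcal A$ appears exactly once in $\mathcal U+a$.

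The only step that needs care is the first one: the encoding of a window has to depend only on differences of affine points and on directions, and not on the absolute positions of the points. This holds for each mixed window type listed in the introduction. I will state it explicitly for all admissible window shapes, so that windows whose base point is not the first entry, such as $([u_1],w,[u_2])$, are covered as well.
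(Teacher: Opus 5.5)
Your proof is correct. The paper states this lemma without proof and treats it as immediate, and your argument supplies the routine verification it leaves implicit: translation commutes with the window encoding, and the window-to-plane bijection composed with the bijection $A\mapsto A+a$ of $\mathcal A$ is again a bijection.

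Two small streamlinings. First, you can skip the enumeration of window shapes. Directly from the definition, $\langle \tau_a x_1,\tau_a x_2,\tau_a x_3\rangle_{\mathrm{aff}}=\langle x_1,x_2,x_3\rangle_{\mathrm{aff}}+a$, because translation by $a$ is an inclusion-preserving bijection on affine subspaces that fixes direction spaces. Second, windows made only of directions cannot occur in a universal cycle on affine planes, since their affine span is not well defined and so they encode nothing.
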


This invariance is used in the gluing process to shift local cycles until
their interface segments match.

\subsection{Gluing of Universal Cycles}

We now formulate the gluing tools. The basic operation is concatenation of
linear segments; cyclic gluing is then obtained by cutting cycles at a
shared interface.

\begin{definition}
A \emph{universal segment} is a finite sequence whose length-$3$ windows
encode each element of a given collection exactly once.

For a sequence $\mathcal U=(x_1,\dots,x_m)$, define
\[
\mathrm{init}(\mathcal U) := (x_1,x_2),
\qquad
\mathrm{term}(\mathcal U) := (x_{m-1},x_m).
\]
\end{definition}

The first lemma gives the concatenation rule for compatible segments.

\begin{lemma}[Gluing of universal segments]\label{lem:gluing_segment}
Let $\mathcal U_1=(x_1,\dots,x_m)$ and $\mathcal U_2=(y_1,\dots,y_n)$ be
universal segments encoding disjoint collections $\mathcal A_1$ and
$\mathcal A_2$. Suppose
\[
\mathrm{term}(\mathcal U_1) = (x_{m-1},x_m)
= (y_1,y_2) = \mathrm{init}(\mathcal U_2).
\]
Define
\[
\mathcal U_1 \cdot \mathcal U_2
:=
(x_1,\dots,x_m,y_3,\dots,y_n),
\]
i.e., the sequence obtained by identifying the common pair
$\mathrm{term}(\mathcal U_1)=\mathrm{init}(\mathcal U_2)$ and removing the
duplicate entries.

Then $\mathcal U_1 \cdot \mathcal U_2$ is a universal segment encoding
$\mathcal A_1 \sqcup \mathcal A_2$.
\end{lemma}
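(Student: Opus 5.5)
The plan is to index the length-$3$ windows of the concatenation $\mathcal U_1\cdot\mathcal U_2=(z_1,\dots,z_{m+n-2})$ directly. Here $z_i=x_i$ for $i\le m$ and $z_{m+j-2}=y_j$ for $j\ge 3$. Since $(x_{m-1},x_m)=(y_1,y_2)$, we also have $z_{m+j-2}=y_j$ for $j=1,2$. Thus the whole sequence agrees with $\mathcal U_1$ on positions $1,\dots,m$ and with $\mathcal U_2$ (shifted by $m-2$) on positions $m-1,\dots,m+n-2$.

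The key step is to classify the windows $(z_i,z_{i+1},z_{i+2})$ for $1\le i\le m+n-4$.
\begin{itemize}
\item If $i\le m-2$, all three entries lie in positions $\le m$, so the window is exactly the $i$-th window of $\mathcal U_1$.
\item If $i\ge m-1$, all three entries lie in positions $\ge m-1$, so the window is the $(i-m+2)$-th window of $\mathcal U_2$.
\end{itemize}
The two index ranges $\{1,\dots,m-2\}$ and $\{m-1,\dots,m+n-4\}$ partition the windows. Moreover $i\mapsto i$ and $i\mapsto i-m+2$ biject them onto the window sets of $\mathcal U_1$ and $\mathcal U_2$ respectively, since each $\mathcal U_k$ of length $\ell$ has $\ell-2$ windows. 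The point to check carefully is that no window straddles the junction in a new way. A window starting at $i=m-2$ uses positions $m-2,m-1,m$, all inside $\mathcal U_1$. A window starting at $m-1$ uses $m-1,m,m+1$, all inside the $\mathcal U_2$ part. So the overlap of length exactly two is precisely what prevents spurious mixed windows; this indexing check is the only real obstacle.

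It follows that the multiset of objects encoded by the windows is the union of those encoded by $\mathcal U_1$ and by $\mathcal U_2$. By hypothesis each element of $\mathcal A_1$ appears exactly once among the first family and each element of $\mathcal A_2$ exactly once among the second. Since $\mathcal A_1\cap\mathcal A_2=\emptyset$, no element is encoded twice, and every element of $\mathcal A_1\sqcup\mathcal A_2$ is encoded exactly once. Hence $\mathcal U_1\cdot\mathcal U_2$ is a universal segment for $\mathcal A_1\sqcup\mathcal A_2$.

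Finally, the argument needs the encoding of a window to depend only on the window's three entries, as in the triple-window language. That holds here, so agreement of entries gives agreement of encoded planes.
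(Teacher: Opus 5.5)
Your proposal is correct and takes the same approach as the paper, whose proof just says the result follows from the gluing definition and the disjointness of $\mathcal A_1$ and $\mathcal A_2$. Your explicit indexing is exactly the check the paper leaves implicit: windows starting at $i\le m-2$ are windows of $\mathcal U_1$, and windows starting at $i\ge m-1$ are windows of the shifted $\mathcal U_2$.
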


\begin{proof}
This follows directly from the gluing definition and the disjointness of
the encoded collections.
\end{proof}

We next pass from segments to cycles by cutting at a common interface.

\begin{lemma}[Gluing of universal cycles]\label{lem:gluing_cycle}
Let $\mathcal A_1$ and $\mathcal A_2$ be disjoint collections with
universal cycles $\mathcal U_1$ and $\mathcal U_2$. If both cycles contain
a common consecutive pair $(a,b)$, then they can be concatenated to form a
universal cycle on $\mathcal A_1 \sqcup \mathcal A_2$.
\end{lemma}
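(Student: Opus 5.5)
We reduce the statement to Lemma~\ref{lem:gluing_segment} by cutting each cycle at the shared pair. Write $\mathcal U_1$ cyclically as $(a,b,x_3,\dots,x_m)$, i.e.\ rotate it so that the common consecutive pair $(a,b)$ occupies the first two positions. Rotating a cycle does not change its multiset of cyclic windows, so this is harmless. Unroll this cycle into the linear sequence
\[
\widetilde{\mathcal U}_1=(a,b,x_3,\dots,x_m,a,b).
\]
Its length-$3$ windows are exactly the $m$ cyclic windows of $\mathcal U_1$: the windows $(a,b,x_3),\dots,(x_{m-2},x_{m-1},x_m)$, together with the two wrap-around windows $(x_{m-1},x_m,a)$ and $(x_m,a,b)$. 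Hence $\widetilde{\mathcal U}_1$ is a universal segment for $\mathcal A_1$ with $\mathrm{init}=\mathrm{term}=(a,b)$.

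Do the same for $\mathcal U_2$, obtaining $\widetilde{\mathcal U}_2=(a,b,y_3,\dots,y_n,a,b)$, a universal segment for $\mathcal A_2$ with $\mathrm{init}=\mathrm{term}=(a,b)$. Since $\mathrm{term}(\widetilde{\mathcal U}_1)=(a,b)=\mathrm{init}(\widetilde{\mathcal U}_2)$ and $\mathcal A_1\cap\mathcal A_2=\emptyset$, Lemma~\ref{lem:gluing_segment} shows that
\[
\widetilde{\mathcal U}_1\cdot\widetilde{\mathcal U}_2=(a,b,x_3,\dots,x_m,a,b,y_3,\dots,y_n,a,b)
\]
is a universal segment for $\mathcal A_1\sqcup\mathcal A_2$. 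Its initial and terminal pairs are both $(a,b)$.

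It remains to close the segment back into a cycle. Delete the final two entries $a,b$ and read the remaining sequence $(a,b,x_3,\dots,x_m,a,b,y_3,\dots,y_n)$ cyclically. The cyclic windows of this cycle are the same as the linear windows of the full segment, because the last two windows of the segment, $(y_{n-1},y_n,a)$ and $(y_n,a,b)$, reappear as the wrap-around windows of the cycle. So each element of $\mathcal A_1\sqcup\mathcal A_2$ is encoded exactly once, and the cycle has length $m+n$.

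The only point requiring care is that window counts are preserved under cutting and closing: no window is lost or duplicated at the seams. This holds because the overlap is exactly two entries, matching the window length $3$ minus one.
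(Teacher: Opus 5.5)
Your proof is correct and takes the same approach as the paper: rotate both cycles to start at $(a,b)$, unroll each into a universal segment with $\mathrm{init}=\mathrm{term}=(a,b)$, concatenate via Lemma~\ref{lem:gluing_segment}, and close the result cyclically. Your explicit window count at the seams and at the final closing step fills in bookkeeping that the paper leaves implicit.
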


\begin{proof}
Rotate both cycles so that $(a,b)$ is the initial consecutive pair. We cut
each cycle at this pair and view it as a universal segment whose initial
and terminal pairs are both $(a,b)$, duplicating the pair at the two ends
of the segment if necessary. The resulting segments are compatible, so they
concatenate by Lemma~\ref{lem:gluing_segment}. Interpreting the result
cyclically gives the desired universal cycle.
\end{proof}

Finally, the same argument applies to a cyclic list of compatible segments.

\begin{lemma}[Cyclic gluing]\label{lem:gluing_multiple}
Let $\mathcal U_1,\dots,\mathcal U_k$ be universal segments encoding
pairwise disjoint collections $\mathcal A_1,\dots,\mathcal A_k$. Suppose
\[
\mathrm{term}(\mathcal U_i)=\mathrm{init}(\mathcal U_{i+1})
\quad (i=1,\dots,k-1),
\qquad
\mathrm{term}(\mathcal U_k)=\mathrm{init}(\mathcal U_1).
\]
Then the concatenation
\[
\mathcal U = \mathcal U_1 \cdot \mathcal U_2 \cdots \mathcal U_k
\]
forms a universal cycle encoding $\bigsqcup_{i=1}^k \mathcal A_i$.
\end{lemma}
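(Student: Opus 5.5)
We argue by induction on $k$, applying Lemma~\ref{lem:gluing_segment} repeatedly and then closing the resulting segment up cyclically. Write $\mathcal S_j := \mathcal U_1\cdot\mathcal U_2\cdots\mathcal U_j$ for $1\le j\le k$.

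\textbf{Step 1: the partial concatenations.} We claim that each $\mathcal S_j$ is a universal segment encoding $\mathcal A_1\sqcup\cdots\sqcup\mathcal A_j$, with $\mathrm{init}(\mathcal S_j)=\mathrm{init}(\mathcal U_1)$ and $\mathrm{term}(\mathcal S_j)=\mathrm{term}(\mathcal U_j)$. The case $j=1$ is trivial. For the inductive step:
\begin{itemize}[nosep]
\item By hypothesis $\mathrm{term}(\mathcal S_j)=\mathrm{term}(\mathcal U_j)=\mathrm{init}(\mathcal U_{j+1})$.
\item The collection $\mathcal A_1\sqcup\cdots\sqcup\mathcal A_j$ is disjoint from $\mathcal A_{j+1}$ by pairwise disjointness.
\item So Lemma~\ref{lem:gluing_segment} applies and gives the claim for $j+1$.
\end{itemize}
The statements about $\mathrm{init}$ and $\mathrm{term}$ follow directly from the definition of the product. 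The implicit assumption is that every $\mathcal U_{j+1}$ has length at least $2$, which is forced because $\mathrm{init}$ and $\mathrm{term}$ must be defined.

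\textbf{Step 2: closing up.} Set $\mathcal S=\mathcal S_k=(z_1,\dots,z_N)$. By Step~1 and the final hypothesis,
\[
(z_{N-1},z_N)=\mathrm{term}(\mathcal U_k)=\mathrm{init}(\mathcal U_1)=(z_1,z_2).
\]
Let $\mathcal U$ be the cyclic sequence $(z_1,\dots,z_{N-2})$, with indices read modulo $N-2$. We need to show that the cyclic length-$3$ windows of $\mathcal U$ are exactly the linear windows $(z_i,z_{i+1},z_{i+2})$ of $\mathcal S$ for $1\le i\le N-2$. For $i\le N-4$ this is immediate. For the two wrap-around windows:
\begin{itemize}[nosep]
\item At $i=N-3$ the cyclic window is $(z_{N-3},z_{N-2},z_1)$, and $z_1=z_{N-1}$.
\item At $i=N-2$ the cyclic window is $(z_{N-2},z_1,z_2)$, and $(z_1,z_2)=(z_{N-1},z_N)$.
\end{itemize}
Hence the multiset of cyclic windows of $\mathcal U$ equals the multiset of linear windows of $\mathcal S$. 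Therefore $\mathcal U$ encodes each element of $\bigsqcup_i\mathcal A_i$ exactly once.

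\textbf{Main obstacle.} The only delicate point is the degenerate bookkeeping in Step~2, which is the same issue as in the proof of Lemma~\ref{lem:gluing_cycle}. The repeated pair appears at both ends of $\mathcal S$, and the argument needs $N\ge 4$ for the reindexing to make sense. If some $\mathcal U_i$ has exactly two entries, it encodes nothing, and under the matching conditions it can simply be deleted. If $N<4$, then the total collection is empty or a single window, and this case is handled by direct inspection, as in Lemma~\ref{lem:gluing_cycle}.
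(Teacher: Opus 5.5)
Your proof is correct and takes the same route as the paper: you iterate Lemma~\ref{lem:gluing_segment} to get a single segment encoding $\bigsqcup_i\mathcal A_i$, then use $\mathrm{term}(\mathcal U_k)=\mathrm{init}(\mathcal U_1)$ to close it up cyclically. Your explicit check that the two wrap-around windows of $(z_1,\dots,z_{N-2})$ coincide with the last two linear windows of $\mathcal S$ fills in the step the paper leaves implicit.
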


\begin{proof}
Repeated application of Lemma~\ref{lem:gluing_segment} gives a universal
segment encoding all $\mathcal A_i$. The final compatibility condition
closes the sequence cyclically, yielding a universal cycle.
\end{proof}

\section{Local Constructions via Frames}

This section provides the local building blocks for universal
triple-window cycles. The basic objects are small configurations of affine
planes organized as three- and four-frames. We encode them by consecutive
triple intersections, and later assemble the resulting local cycles into
global ones.

\subsection{Geometric Encoding and Realization}

We first set up the geometric language used by the local encodings.

\medskip

\noindent\textbf{Affine span with directions.}
We use one notation for affine points and directions. For entries
$x_1,\dots,x_k$, each either a point or a $1$-subspace, define
\[
\langle x_1,\dots,x_k\rangle_{\mathrm{aff}}
\]
to be the smallest affine subspace containing all affine points among the
$x_i$ and whose direction space contains all directions among the $x_i$.

\medskip

\noindent\textbf{Intersection with directions.}
We also extend intersection so that it may return either an affine point
or a direction. Let
\[
A = w + U,\qquad B = w' + V
\]
be affine subspaces of $V$.

\begin{itemize}
\item If $A \cap B \neq \varnothing$, then their intersection is the usual
affine intersection
\[
A \cap B.
\]
\item If $A \cap B = \varnothing$, then we define their intersection to be
the linear subspace
\[
U \cap V,
\]
viewed as a direction.
\end{itemize}

In particular, when $U \cap V$ is $1$-dimensional, the intersection is a
direction.

\medskip

This convention matches the two kinds of entries that occur in the local
cycles: affine points when subspaces meet, and directions otherwise.
It allows a single triple of entries to describe an affine plane even when
some adjacent planes meet only through their linear directions.

\medskip

\noindent\textbf{Triple-transversal sequences.}
We next record the condition under which consecutive triples have entries
of this controlled form.
Let
\[
\Pi_i = w_i + P_i \subset \mathbb{F}_q^n, \qquad i=1,\dots,m,
\]
be a sequence of affine planes. We say that $(\Pi_i)$ is
\emph{triple-transversal} if each consecutive triple intersection
\[
\Pi_i \cap \Pi_{i+1} \cap \Pi_{i+2}
\]
is a single element, either an affine point or a direction.

\noindent\textbf{Encoding and admissibility.}
Once the triple intersections are controlled, the question is whether they
recover the original planes by affine span.
A sequence $v=(v_1,\dots,v_{m+2})$ with affine-point or direction entries
is an \emph{encoding} of $(\Pi_i)$ if
\[
\Pi_i = \langle v_i, v_{i+1}, v_{i+2}\rangle_{\mathrm{aff}},
\qquad i=1,\dots,m.
\]
We call a triple-transversal sequence \emph{admissible} if such an
encoding exists.

When $(\Pi_i)$ is triple-transversal, the interior entries are uniquely
determined by
\[
v_i = \Pi_i \cap \Pi_{i-1} \cap \Pi_{i-2},
\qquad i=3,\dots,m.
\]
Thus only the boundary entries remain to be chosen:
\[
v_1,\, v_2,\, v_{m+1},\, v_{m+2},
\]
and they must be chosen so that
\[
\Pi_i = \langle v_i, v_{i+1}, v_{i+2}\rangle_{\mathrm{aff}}
\]
holds for all $i$.

Consequently, the boundary data
\[
\mathrm{init}(v):=(v_1,v_2),
\qquad
\mathrm{term}(v):=(v_{m+1},v_{m+2}),
\]
determine the encoding, and different choices may give distinct but
equivalent encodings of the same sequence.

\medskip

\noindent\textbf{General position.}
The preceding encoding depends on triple intersections having controlled
size. We record the simple linear condition that guarantees this behavior
in the frame configurations used below.

Informally, subspaces are in \emph{general position} when they intersect
as transversally as possible, meaning that all intersections have the
smallest dimensions compatible with the ambient space.

Here we use this notion only for triples of \(2\)-subspaces inside a
common \(3\)-subspace. This is the local linear situation that will occur
whenever three consecutive affine planes are encoded by a single
triple-window entry.

\begin{definition}[General position]
Three \(2\)-subspaces
\[
P_1,P_2,P_3 \subset U,
\qquad \dim U = 3,
\]
are said to be in \emph{general position} if
\[
\dim(P_i\cap P_j)=1
\quad\text{for all } i\ne j,
\]
and
\[
P_1\cap P_2\cap P_3 = \{0\}.
\]
Equivalently, every pair intersects in a line, while no nonzero vector
lies in all three planes simultaneously.

\end{definition}

We use the same terminology for affine planes in the following sense.
Three affine planes lying in a common affine \(3\)-subspace are said to be
in \emph{general position} if their direction spaces are in general
position. In this case, the three affine planes intersect in a unique
affine point.

The local constructions require this condition not just for one triple but
for several triples inside a small family. Frames package that requirement
for the linear parts that support the affine-plane blocks.

\begin{definition}[Frames]
A \emph{frame} is a finite unordered family of \(2\)-subspaces
\[
\mathcal P=\{P_1,\dots,P_m\}\subset V,
\qquad m\ge3,
\]
such that every triple of distinct planes in \(\mathcal P\) is in general
position.

The \emph{direction set} of \(\mathcal P\) is
\[
\mathcal D(\mathcal P)
=
\{P_i\cap P_j : i\ne j\}.
\]

A frame with \(m=3\) (resp.\ \(m=4\)) is called a \emph{three-frame}
(resp.\ \emph{four-frame}).
\end{definition}

The direction set records the possible linear interfaces between local
blocks. In later gluing arguments, shared elements of these direction sets
will be used to match local cycles.

Although the definition does not require the ambient \(3\)-subspaces
associated with different triples to coincide, the next lemma shows that
all planes in a frame automatically lie in a common \(3\)-subspace. For
three-frames and four-frames, this also yields a common anchor space used
in the local constructions below.

\begin{lemma}\label{lem:frame_common_3space_anchor}
Let \(\mathcal P\) be a frame. Then all planes in \(\mathcal P\) lie in a
common \(3\)-subspace \(U\subset V\).

If moreover \(\mathcal P\) is a three-frame or a four-frame, then there
exists \(w\in U\) not contained in any \(P_i\), and for any such \(w\)
there exists a subspace \(W_{\mathcal P}\) containing \(w\) such that
\[
V=P_i\oplus W_{\mathcal P}
\qquad\text{for all } i.
\]
\end{lemma}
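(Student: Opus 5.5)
The first claim is a rigidity statement about the span of two planes; the second is a counting argument inside that $3$-subspace, followed by an explicit choice of complement.

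\textbf{Common $3$-subspace.} Fix two distinct planes $P_1,P_2\in\mathcal P$. By the frame condition applied to any triple containing them, $\dim(P_1\cap P_2)=1$. Hence $U:=P_1+P_2$ has dimension $2+2-1=3$.

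Now let $P_k$ be any other plane of $\mathcal P$. The triple $\{P_1,P_2,P_k\}$ is in general position, so by definition it lies in some $3$-subspace $U'$. Since $U'$ contains $P_1+P_2=U$ and both have dimension $3$, we get $U'=U$, and therefore $P_k\subset U$. So every plane of the frame lies in $U$.

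\textbf{A vector of $U$ outside every $P_i$.} Let $\mathcal P$ be a three- or four-frame, and count the nonzero vectors of $U$ covered by the planes $P_i$ using inclusion--exclusion.

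First, the lines $P_i\cap P_j$ are pairwise distinct. If $P_i\cap P_j=P_i\cap P_k$ with $j\ne k$, this line lies in the triple intersection $P_i\cap P_j\cap P_k$, which is $\{0\}$. If $P_1\cap P_2=P_3\cap P_4$, the line lies in $P_1\cap P_2\cap P_3=\{0\}$. Both are contradictions.

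Distinct lines share only $0$, and triple intersections are trivial. So a nonzero covered vector lies in at most two of the planes, and in exactly two only when it lies on one of the lines $P_i\cap P_j$. The number of nonzero covered vectors is therefore
\[
m(q^2-1)-\binom m2(q-1)=
\begin{cases}
(q-1)\,3q, & m=3,\\
(q-1)(4q-2), & m=4.
\end{cases}
\]
Compare this with $q^3-1=(q-1)(q^2+q+1)$. We have $q^2+q+1>3q$ always, and $q^2+q+1>4q-2$ because $q^2-3q+3>0$ for all $q$. So some vector $w\in U$ lies in no $P_i$, and $w\neq 0$.

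The only delicate point is $q=2$, where the naive bound $4(q^2-1)<q^3-1$ fails. This is why the distinctness of the pairwise intersection lines is needed.

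\textbf{The common complement.} Given any such $w$, choose a complement $C$ of $U$ in $V$, so $\dim C=n-3$, and set $W_{\mathcal P}:=\langle w\rangle\oplus C$, which has dimension $n-2$.

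Suppose $p=\lambda w+c\in P_i\cap W_{\mathcal P}$ with $c\in C$. Since $p$ and $\lambda w$ lie in $U$, so does $c$, and $U\cap C=0$ gives $c=0$. Then $p=\lambda w\in P_i$, and since $w\notin P_i$ this forces $\lambda=0$, so $p=0$. Thus $P_i\cap W_{\mathcal P}=0$, and because $\dim P_i+\dim W_{\mathcal P}=n$, we conclude $V=P_i\oplus W_{\mathcal P}$ for every $i$.
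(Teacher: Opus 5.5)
Your proof is correct and follows the paper's route: you use the same $U=P_1+P_2$, the same count of covered points (you count nonzero vectors, the paper counts one-dimensional subspaces, giving $3q$ or $4q-2$ against $q^2+q+1$), and the same complement $\langle w\rangle\oplus C$. The only differences are minor. You obtain $P_k\subset U$ from the ambient $3$-subspace built into the paper's definition of general position, whereas the paper derives it from the intersection conditions alone, since the distinct lines $P_k\cap P_1$ and $P_k\cap P_2$ span $P_k$. Also, your side remark is slightly off: the crude bound $4(q^2-1)<q^3-1$ fails at $q=3$ as well as $q=2$, though your exact count covers both cases.
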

\begin{proof}
Choose distinct \(P_1,P_2\in\mathcal P\) and set
\(
U:=P_1+P_2
\).
Since \(\dim(P_1\cap P_2)=1\), we have \(\dim U=3\).

For \(i\ge3\), the lines \(P_i\cap P_1\) and \(P_i\cap P_2\) are
distinct. Otherwise,
\(
P_i\cap P_1\cap P_2\neq\{0\},
\)
contrary to general position. Hence these two lines span \(P_i\), so
\(P_i\subset U\). Thus all planes of \(\mathcal P\) lie in the common
\(3\)-subspace \(U\).

Now suppose \(\mathcal P\) is a three-frame or a four-frame. Since the
planes in \(\mathcal P\) contain at most \(4q-2\) one-dimensional
subspaces of \(U\), while \(U\) contains \(q^2+q+1\), there exists
\(w\in U\) not contained in any \(P_i\).

Let \(L=\langle w\rangle\). Since \(w\notin P_i\) for every \(i\),
we have \(L\not\subset P_i\) for every \(i\). Hence \(P_i\cap L=\{0\}\),
and since \(\dim P_i=2\) and \(\dim U=3\), we obtain
\(
U=P_i\oplus L
\)
for every \(i\).

Choose a complement \(V=U\oplus W_0\) and define
\(
W_{\mathcal P}:=L\oplus W_0
\).
Then
\[
V=P_i\oplus W_{\mathcal P}
\qquad\text{for all } i.
\]
\end{proof}

The preceding lemma supplies the common ambient and anchor spaces needed
for the three-frame and four-frame constructions. We now introduce the
ordered version used in the admissibility criterion.

\begin{definition}[Cycle frames]
A \emph{cycle frame} is a cyclically ordered family
\(
\mathcal P=(P_1,\dots,P_m)
\)
of \(2\)-subspaces, with \(m\ge3\), such that every three consecutive planes
\(
P_i,P_{i+1},P_{i+2}
\)
(indices modulo \(m\)) are in general position.

Throughout a cycle frame, indices are read modulo \(m\) with values in
\(\{1,\dots,m\}\). For each \(i\), define
\[
[u_i]:=P_{i-1}\cap P_i.
\]
Choose a nonzero representative \(u_i\) of each direction \([u_i]\). Then
\(
P_i=\langle u_i,u_{i+1}\rangle
\)
for all \(i\in\mathbb Z/m\mathbb Z\).

The set
\[
\mathcal D(\mathcal P)=\{[u_1],\dots,[u_m]\}
\]
is called the \emph{direction set} of \(\mathcal P\).
\end{definition}
\medskip

Cycle frames are designed for sequences: consecutive triples of linear
parts have the same controlled geometry throughout the cycle. With such an
order fixed, the remaining issue is to decide when affine translates of
these planes are recovered from the corresponding triple intersections.

\medskip

\noindent\textbf{Admissibility over a cyclic frame.}
We now connect the cyclic ordering of the linear parts with the
triple-window encoding of affine translates.

\begin{theorem}\label{thm:admissibility}
Let
\(
\mathcal P=(P_1,\dots,P_m)
\)
be a cycle frame. Let \(r\ge1\), and let
\(
(\Pi_1,\dots,\Pi_{rm})
\)
be affine planes of the form
\[
\Pi_i=w_i+P_{i\bmod m},
\]
for shifts \(w_i\in V\).
Here \(P_{i\bmod m}\) is interpreted using the modulo convention above.

Assume that every three consecutive affine planes
\(
\Pi_{i-2},\Pi_{i-1},\Pi_i
\)
lie in a common affine \(3\)-subspace. Define \((v_1,\dots,v_{rm+2})\) by
\[
v_1:=P_m\cap P_1,\quad
v_2:=P_1\cap P_2,\quad
v_{rm+1}:=P_{m-1}\cap P_m,\quad
v_{rm+2}:=P_m\cap P_1,
\]
and for \(i=3,\dots,rm\),
\[
v_i:=\Pi_i\cap \Pi_{i-1}\cap \Pi_{i-2}.
\]

If \(v_i\neq v_{i+1}\) for all \(i=1,\dots,rm+1\), then \((v_i)\) is an
encoding of \((\Pi_i)\).
\end{theorem}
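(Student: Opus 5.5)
We must show that for each \(i=1,\dots,rm\), \(\Pi_i=\langle v_i,v_{i+1},v_{i+2}\rangle_{\mathrm{aff}}\). The plan is to first establish that every entry \(v_j\) with \(i\le j\le i+2\) is contained in \(\Pi_i\): a point lies in \(\Pi_i\), and a direction lies in \(P_i\). Then we show that the three entries span an affine subspace of dimension at least two. Since the span sits inside \(\Pi_i\), which has dimension two, this forces equality.

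\emph{Step 1: containment.} For interior indices, \(v_j=\Pi_j\cap\Pi_{j-1}\cap\Pi_{j-2}\) is contained in each of \(\Pi_{j-2},\Pi_{j-1},\Pi_j\). This intersection is taken in the extended sense: it is an affine point when the three planes meet, and a direction otherwise. Because \(\Pi_{j-2},\Pi_{j-1},\Pi_j\) lie in a common affine \(3\)-subspace and their linear parts \(P_{j-2},P_{j-1},P_j\) are in general position, the intersection is a single affine point. So each interior \(v_j\) is a point lying in \(\Pi_i\) whenever \(j\in\{i,i+1,i+2\}\). The boundary entries are directions. We have \(v_1=[u_1]\subset P_m\cap P_1\) and \(v_2=[u_2]\subset P_1\cap P_2\), both contained in \(P_1\), and \(v_2\subset P_2\) as well. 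At the other end, \(v_{rm+1}=[u_m]\subset P_{m-1}\cap P_m\) and \(v_{rm+2}=[u_1]\subset P_m\cap P_1\). Since \(\Pi_{rm}\) has linear part \(P_m\) and \(\Pi_{rm-1}\) has linear part \(P_{m-1}\), these boundary directions lie in the direction spaces of exactly the planes whose windows contain them. Hence \(\langle v_i,v_{i+1},v_{i+2}\rangle_{\mathrm{aff}}\subseteq\Pi_i\) for all \(i\).

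\emph{Step 2: dimension count.} Each window contains at least one affine point, except possibly the windows \((v_1,v_2,v_3)\) and \((v_{rm},v_{rm+1},v_{rm+2})\), and these still contain one point because \(rm\ge3\). We then split into cases by the types of the three entries.
\begin{itemize}
\item \textbf{Three points.} Here \(v_{i+1}\neq v_i\) and \(v_{i+2}\neq v_{i+1}\) by hypothesis. We must also rule out collinearity, and in particular \(v_i=v_{i+2}\). I would argue that if \(v_i,v_{i+1},v_{i+2}\) were collinear, the line through them would lie in \(\Pi_{i-2}\cap\Pi_{i-1}\cap\Pi_i\cap\Pi_{i+1}\cap\Pi_{i+2}\) (after checking containments). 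In particular it would lie in \(\Pi_i\cap\Pi_{i+1}\cap\Pi_{i+2}\), which is a single point. This is a contradiction. The same argument handles \(v_i=v_{i+2}\), since then \(v_i\) and \(v_{i+1}\) are two distinct points of \(\Pi_{i}\cap\Pi_{i+1}\cap\Pi_{i+2}\).
\item \textbf{Two points and a direction (boundary windows).} Take, for instance, the window \(v_1,v_2,v_3\) with \(v_1=[u_1]\), \(v_2=[u_2]\) and \(v_3\) a point. The two directions are distinct lines in \(P_1\), because general position gives \(P_m\cap P_1\neq P_1\cap P_2\). They therefore span \(P_1\), and together with the point \(v_3\) they give \(v_3+P_1=\Pi_1\). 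For the window \(v_2,v_3,v_4\), the span contains \(v_3\), \(v_4\) and \([u_2]\). We need \(v_4-v_3\notin[u_2]\). Both points lie in \(\Pi_2\cap\Pi_3\), whose linear part is \([u_3]\neq[u_2]\), and \(v_3\neq v_4\). So \(v_4-v_3\in[u_3]\setminus\{0\}\), giving the dimension-two span \(v_3+\langle u_2,u_3\rangle=v_3+P_2=\Pi_2\). The terminal windows are handled symmetrically.
\end{itemize}

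\emph{Main obstacle.} The hardest part will be the careful verification at the boundary of exactly which planes each boundary direction must lie in. The intersections are computed modulo \(m\) but the sequence is indexed up to \(rm\), so one must check that \(\Pi_{rm}\) and \(\Pi_{rm-1}\) indeed have linear parts \(P_m\) and \(P_{m-1}\), respectively. The non-collinearity argument in the three-point case also needs care. There I would rely on the fact that consecutive points \(v_j,v_{j+1}\) both lie in \(\Pi_{j-1}\cap\Pi_j\), a line with direction \(P_{j-1}\cap P_j=[u_j]\). The line through \(v_i,v_{i+1}\) therefore has direction \([u_{i+1}]\), and the line through \(v_{i+1},v_{i+2}\) has direction \([u_{i+2}]\). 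These are distinct lines because the consecutive directions are distinct by general position, which gives the non-collinearity.
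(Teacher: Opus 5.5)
Your proposal is correct and follows essentially the paper's argument: every entry of the window lies in $\Pi_i$, and the consecutive pairs either span lines inside $\Pi_{i-1}\cap\Pi_i$ and $\Pi_i\cap\Pi_{i+1}$ or are the prescribed boundary directions, with distinct directions, so the span is $2$-dimensional and equals $\Pi_i$. Two index slips need fixing, though neither affects the conclusion since your direction-based argument already settles non-collinearity. First, the line through $v_i,v_{i+1}$ has direction $[u_i]=P_{i-1}\cap P_i$, not $[u_{i+1}]$. Second, in your first collinearity argument the relevant single-point triple intersection is $\Pi_{i-1}\cap\Pi_i\cap\Pi_{i+1}$, not $\Pi_i\cap\Pi_{i+1}\cap\Pi_{i+2}$.
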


\begin{proof}
For \(i\ge3\), the planes
\(
\Pi_{i-2},\Pi_{i-1},\Pi_i
\)
lie in a common affine \(3\)-subspace. Since their direction spaces are
three consecutive planes in the cycle frame \(\mathcal P\), they are in
general position. Hence
\(
v_i=\Pi_i\cap \Pi_{i-1}\cap \Pi_{i-2}
\)
is a single affine point.

We now recover \(\Pi_i\) from the window
\(
(v_i,v_{i+1},v_{i+2})
\).
Since \(v_i,v_{i+1}\in \Pi_i\cap\Pi_{i-1}\) and
\(v_i\neq v_{i+1}\), the line
\(
\langle v_i,v_{i+1}\rangle_{\mathrm{aff}}
\)
has direction \(P_{i-1}\cap P_i\). Similarly,
\(
\langle v_{i+1},v_{i+2}\rangle_{\mathrm{aff}}
\)
has direction \(P_i\cap P_{i+1}\).

At the boundary windows, the same directions are supplied directly by the
prescribed direction entries
\(
v_1,v_2,v_{rm+1},v_{rm+2}
\).

These two directions are distinct since \(\mathcal P\) is a cycle frame.
Hence
\(
\langle v_i,v_{i+1},v_{i+2}\rangle_{\mathrm{aff}}
\)
is a \(2\)-dimensional affine subspace contained in \(\Pi_i\). Therefore
\[
\langle v_i,v_{i+1},v_{i+2}\rangle_{\mathrm{aff}}
=
\Pi_i,
\]
so \((v_i)\) encodes \((\Pi_i)\).
\end{proof}
\medskip

\subsection{Three-Frame Constructions}

We begin with three-frames, where the boundary matching is most transparent.
Let
\(
\mathcal P=\{P_1,P_2,P_3\}
\)
be a three-frame with direction set
\[
\mathcal D(\mathcal P)=\{[u_1],[u_2],[u_3]\},
\]
where
\(
[u_i]=P_{i-1}\cap P_i
\)
and
\(
P_i=\langle u_i,u_{i+1}\rangle
\)
(indices modulo \(3\)). Set
\(
U=\langle u_1,u_2,u_3\rangle.
\)

By Lemma~\ref{lem:frame_common_3space_anchor}, there exists a common anchor set
\(
W_{\mathcal P}\subset V
\)
such that
\(
V=P_i\oplus W_{\mathcal P}
\)
for all \(i\).

\medskip
\noindent\textbf{Basic blocks.}

The first blocks encode one translate of the three-frame at a time.
For $i\in\mathbb Z/3\mathbb Z$, consider the planes
\[
\Pi_1=P_i,\quad \Pi_2=P_{i+1},\quad \Pi_3=P_{i+2},
\]
which lie in $U$ and, with this cyclic order, satisfy the hypotheses of
Theorem~\ref{thm:admissibility}. The theorem gives the segment
\[
B_i(0):
\quad
(P_i\cap P_{i+1})\to(P_{i+1}\cap P_{i+2})\to 0
\to(P_{i+2}\cap P_i)\to(P_i\cap P_{i+1}).
\]
For $w\in W_{\mathcal P}$, define
\[
B_i(w):=B_i(0)+w,
\]
where affine entries are translated and direction entries are unchanged.
Then $B_i(w)$ encodes exactly $w+P_i,\;w+P_{i+1},\;w+P_{i+2}$. We
summarize the resulting properties as follows.

\begin{lemma}\label{lem:block_properties}
For each $w\in W_{\mathcal P}$ and $i\in\mathbb Z/3\mathbb Z$, the block
$B_i(w)$ is a universal segment encoding $w+\mathcal P$. Moreover, for all
$w,w'\in W_{\mathcal P}$,
\[
\mathrm{term}(B_i(w))=\mathrm{init}(B_{i+2}(w')).
\]
\end{lemma}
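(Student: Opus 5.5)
The plan is to verify two things: first the encoding property of \(B_i(0)\), and then its behaviour under translation and at the two ends.

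\textbf{Step 1: the untranslated block.} Write \(B_i(0)=(v_1,\dots,v_5)\) with
\[
v_1=[u_{i+1}],\quad v_2=[u_{i+2}],\quad v_3=0,\quad v_4=[u_i],\quad v_5=[u_{i+1}],
\]
using \(P_j\cap P_{j+1}=[u_{j+1}]\). Here \(m=3\) and \(r=1\), and the three planes \(P_i,P_{i+1},P_{i+2}\), taken in this cyclic order, form a cycle frame. They lie in the common \(3\)-subspace \(U\) by Lemma~\ref{lem:frame_common_3space_anchor}. The interior entry is \(P_i\cap P_{i+1}\cap P_{i+2}\). These are linear subspaces through \(0\) in general position, so this intersection is the point \(0\).

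To apply Theorem~\ref{thm:admissibility} we must check that consecutive entries are distinct. This is clear: a direction never equals the point \(0\), and \([u_{i+1}]\neq[u_{i+2}]\) by general position. So the three windows
\[
(v_1,v_2,v_3),\qquad (v_2,v_3,v_4),\qquad (v_3,v_4,v_5)
\]
span \(P_i\), \(P_{i+1}\) and \(P_{i+2}\). One can also check this directly: each window consists of the point \(0\) together with two distinct directions lying in the relevant plane.

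\textbf{Step 2: translation.} Translation by \(w\) commutes with affine span, provided direction entries are left fixed. Hence the windows of \(B_i(w)\) span \(w+P_i\), \(w+P_{i+1}\) and \(w+P_{i+2}\), which is exactly \(w+\mathcal P\). Each of these planes is encoded once, because the three planes are distinct. This shows that \(B_i(w)\) is a universal segment for \(w+\mathcal P\), in the sense of the segment definition and in the spirit of Lemma~\ref{lem:translate_cycle}.

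\textbf{Step 3: the matching identity.} Since the end entries of each block are directions, they are unaffected by translation. Therefore
\[
\mathrm{term}(B_i(w))=\bigl([u_i],[u_{i+1}]\bigr),
\]
independently of \(w\). For \(B_{i+2}(w')\), reindex with \(j=i+2\) modulo \(3\), so that \(j+1=i\) and \(j+2=i+1\). This gives
\[
\mathrm{init}(B_{i+2}(w'))=\bigl([u_{j+1}],[u_{j+2}]\bigr)=\bigl([u_i],[u_{i+1}]\bigr),
\]
again independently of \(w'\). The two pairs are equal, which proves the identity.

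\textbf{Main care point.} The only delicate step is the index bookkeeping modulo \(3\), and in particular confirming that \(P_j\cap P_{j+1}=[u_{j+1}]\) under the convention \([u_j]=P_{j-1}\cap P_j\). Everything else is a direct consequence of Theorem~\ref{thm:admissibility} together with translation invariance.
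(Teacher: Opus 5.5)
Your proof is correct and follows the paper's route: you realize $B_i(0)$ through Theorem~\ref{thm:admissibility} on the three planes through $0$, translate by $w$ while leaving direction entries fixed, and read off the boundary direction pairs, which do not depend on $w$ and agree after reindexing $j=i+2$. One small point: the displayed block $([u_{i+1}],[u_{i+2}],0,[u_i],[u_{i+1}])$ is what the theorem produces for the cyclic order starting at $P_{i+1}$, so its windows encode $P_{i+1},P_{i+2},P_i$ in that order; your direct window check is therefore the cleaner justification, and the encoded set is the same either way.
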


\medskip
\noindent\textbf{Refinement.}
The basic blocks concatenate directly only when the anchor set splits
evenly among the three boundary types. When this fails, we replace one
basic block by a longer segment with the same boundary pairs.

Choose
\[
w^* \in U \setminus \bigcup_{i=1}^3 P_i,
\qquad
w^* = w_1^* + w_2^* + w_3^*,\ \ w_i^* \in \langle u_i \rangle.
\]
Then each $w_i^* \neq 0$. By Lemma~\ref{lem:frame_common_3space_anchor},
we may assume that
\[
w^* \in W_{\mathcal P}.
\]

At the origin, consider the sequence
\[
P_i,\; P_{i+1},\; P_{i+2},\;
w^*+P_i,\; w^*+P_{i+1},\; w^*+P_{i+2},
\]
which lies in $U$ and whose linear parts, in the displayed cyclic order,
satisfy the hypotheses of
Theorem~\ref{thm:admissibility}. The theorem gives a segment
\[
\begin{aligned}
R_i(0,w^*):\quad
&(P_i\cap P_{i+1})\to (P_{i+1}\cap P_{i+2})
\to 0 \to w_{i}^*\\
&\to w_i^*+w_{i+1}^* \to w^*
\to (P_{i+2}\cap P_i)\to (P_i\cap P_{i+1}).
\end{aligned}
\]
Since $w_i^* \neq 0$ for all $i$, all consecutive entries are distinct, so
Theorem~\ref{thm:admissibility} applies.

For $w\in W_{\mathcal P}$, define
\[
R_i(w,w^*):=R_i(0,w^*)+w.
\]
By Lemma~\ref{lem:translate_cycle}, translation preserves admissibility
and the encoded affine planes. The refinement has the following boundary
compatibility.
 
\begin{lemma}\label{lem:refine_cyclic}
Each $R_i(w,w^*)$ is a universal segment encoding
\[
(w+\mathcal P)\;\cup\;((w+w^*)+\mathcal P),
\]
and satisfies
\[
\mathrm{init}(R_i(w,w^*))=\mathrm{init}(B_i(w)),\qquad
\mathrm{term}(R_i(w,w^*))=\mathrm{term}(B_i(w)).
\]
\end{lemma}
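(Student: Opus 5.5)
The plan is to reduce everything to the case \(w=0\), verify \(R_i(0,w^*)\) directly, and then transport the result by translation.

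\textbf{Step 1: the case \(w=0\).} Apply Theorem~\ref{thm:admissibility} to the six-plane sequence
\[
P_i,\ P_{i+1},\ P_{i+2},\ w^*+P_i,\ w^*+P_{i+1},\ w^*+P_{i+2},
\]
taking the cycle frame \((P_i,P_{i+1},P_{i+2})\) with \(m=3\) and \(r=2\). All six planes lie in \(U\), since \(w^*\in U\). So every three consecutive planes lie in the common affine \(3\)-subspace \(U\), and their linear parts are three distinct planes of the three-frame, hence in general position.

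\textbf{Step 2: the interior entries.} Set \(v_3,\dots,v_6\) equal to the triple intersections. Write points in the basis \(u_i,u_{i+1},u_{i+2}\); note that \(P_i\) is the locus where the \(u_{i+2}\)-coordinate vanishes, and similarly for the other two planes. Then:
\begin{itemize}
\item \(P_i\cap P_{i+1}\cap P_{i+2}=\{0\}\).
\item \(P_{i+1}\cap P_{i+2}\cap(w^*+P_i)\) is the point with coordinates \((\ast,0,0)\) and \(u_{i+2}\)-coordinate equal to that of \(w^*\) in the \(P_i\)-equation. This forces the point to be \(w_i^*\).
\item The next two intersections are \(w_i^*+w_{i+1}^*\) and \(w^*\), by the same coordinate computation.
\end{itemize}
The boundary entries prescribed by the theorem are \(v_1=P_{i+2}\cap P_i\) and \(v_2=P_i\cap P_{i+1}\). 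I need to check that the cyclic indexing in the theorem matches the displayed boundary directions \((P_i\cap P_{i+1})\to(P_{i+1}\cap P_{i+2})\) and \((P_{i+2}\cap P_i)\to(P_i\cap P_{i+1})\). I expect this to work by relabeling the cycle frame starting at \(P_{i+1}\), or by reading the boundary directions off the first and last windows. I expect this bookkeeping of indices to be the main, though minor, obstacle.

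\textbf{Step 3: distinctness and the encoded set.} Consecutive entries are distinct. Among points this holds because each \(w_j^*\neq 0\). Point entries and direction entries are automatically different types. Hence the theorem applies, and \(R_i(0,w^*)\) encodes exactly the six planes \(\mathcal P\cup(w^*+\mathcal P)\). These are six distinct planes: \(w^*\notin P_j\), so \(w^*+P_j\neq P_j\). Therefore the sequence is a universal segment for that collection.

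\textbf{Step 4: translation by \(w\).} Each window's affine span is translation-equivariant: points shift by \(w\) and directions are unchanged. So \(R_i(w,w^*)=R_i(0,w^*)+w\) encodes
\[
(w+\mathcal P)\cup\bigl((w+w^*)+\mathcal P\bigr),
\]
as in Lemma~\ref{lem:translate_cycle}.

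\textbf{Step 5: boundary pairs.} The first two entries and the last two entries of \(R_i(w,w^*)\) are direction entries. Translation leaves them unchanged, and they coincide with the corresponding entries of \(B_i(w)=B_i(0)+w\). Comparing the displayed sequences gives the stated equalities of \(\mathrm{init}\) and \(\mathrm{term}\).
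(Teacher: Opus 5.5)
Your route is the paper's: apply Theorem~\ref{thm:admissibility} with $m=3$, $r=2$ to the six translates inside $U$, use $w_j^*\neq 0$ for distinctness of consecutive entries, then translate by $w$ and compare the unchanged direction entries at the two ends. Step~2, however, is wrong as written. In the basis $u_i,u_{i+1},u_{i+2}$ you have $P_{i+1}=\{x_i=0\}$ and $P_{i+2}=\{x_{i+1}=0\}$. So $P_{i+1}\cap P_{i+2}=[u_{i+2}]$, whose points have coordinates $(0,0,\ast)$, not $(\ast,0,0)$. Consequently
\[
P_{i+1}\cap P_{i+2}\cap(w^*+P_i)=w_{i+2}^*,
\qquad
P_{i+2}\cap(w^*+P_i)\cap(w^*+P_{i+1})=w_i^*+w_{i+2}^*.
\]
For the order $P_i,P_{i+1},P_{i+2},w^*+P_i,\dots$ with cycle frame $(P_i,P_{i+1},P_{i+2})$, the theorem therefore produces
\[
[u_i]\to[u_{i+1}]\to 0\to w_{i+2}^*\to w_i^*+w_{i+2}^*\to w^*\to[u_{i+2}]\to[u_i].
\]
This encodes the same six planes, but its boundary pairs are those of $B_{i+2}(0)$, not $B_i(0)$.

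So the relabeling you mention in passing is required, not an optional way to tidy the indices. Run the argument on the order $P_{i+1},P_{i+2},P_i,\,w^*+P_{i+1},w^*+P_{i+2},w^*+P_i$ with cycle frame $(P_{i+1},P_{i+2},P_i)$. The boundary entries are then $P_i\cap P_{i+1},\ P_{i+1}\cap P_{i+2}$ at the start and $P_{i+2}\cap P_i,\ P_i\cap P_{i+1}$ at the end. The interior triple intersections are exactly $0,\ w_i^*,\ w_i^*+w_{i+1}^*,\ w^*$, which is the displayed $R_i(0,w^*)$. With this change, Steps~3--5 go through as you wrote them.

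The paper's text has the same slip: it names the unrotated order but displays the entries of the rotated one. Equivalently, since $i$ ranges over $\mathbb Z/3\mathbb Z$, your unrotated computation, done correctly, yields $R_{i+2}(0,w^*)$. Applying it with $i$ replaced by $i+1$ gives the lemma for index $i$.
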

\medskip

\noindent\textbf{Universal cycles.}

The local segments are now assembled by selecting blocks whose boundary
pairs form an Eulerian circuit.

\begin{theorem}\label{thm:cyclic_parallel_classes}
Suppose $(q,n)\neq(2,3)$, and let $\mathcal P$ be a three-frame with
anchor set $W_{\mathcal P}$. Then
\[
\bigsqcup_{w\in W_{\mathcal P}} (w+\mathcal P)
=
\mathcal A(P_1)\sqcup \mathcal A(P_2)\sqcup \mathcal A(P_3)
\]
admits a universal triple-window cycle in which, for every
$[u]\in\mathcal D(\mathcal P)$, there exists an affine point $v$ such
that the segment $[u]\to v$ appears.
\end{theorem}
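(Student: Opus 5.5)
The plan is to view the blocks $B_i(w)$ and $R_i(w,w^*)$ as edges of a small directed graph on boundary pairs, and to find a closed walk in it that uses each anchor $w\in W_{\mathcal P}$ exactly once.

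\textbf{Boundary states.} Reading off $B_i(0)$, we have
\[
\mathrm{init}(B_i(w))=([u_{i+1}],[u_{i+2}]),\qquad \mathrm{term}(B_i(w))=([u_i],[u_{i+1}]).
\]
Since these pairs consist only of directions, they do not depend on $w$. Set $s_j:=([u_j],[u_{j+1}])$ for $j\in\mathbb Z/3\mathbb Z$. Then each $B_i(w)$ is an edge $s_{i+1}\to s_i$. By Lemma~\ref{lem:refine_cyclic}, each $R_i(w,w^*)$ is an edge with the same endpoints. The underlying graph is therefore the directed $3$-cycle
\[
s_0\to s_2\to s_1\to s_0.
\]
This is also the content of Lemma~\ref{lem:block_properties}, which says $\mathrm{term}(B_i)=\mathrm{init}(B_{i+2})$. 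A closed walk on this graph must have length $N\equiv 0\pmod 3$, and consecutive blocks in it take the indices $i,\,i+2,\,i+1,\,i,\dots$ in turn.

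\textbf{Partitioning the anchors.} Each $B$ consumes one anchor and each $R$ consumes the two anchors $w$ and $w+w^*$. I will partition $W_{\mathcal P}$ into $k$ disjoint pairs $\{w,w+w^*\}$ and singletons, with $k\in\{0,1,2\}$. Since $|W_{\mathcal P}|=q^{n-2}$, the number of segments is
\[
N=q^{n-2}-k,
\]
and I choose $k\equiv q^{n-2}\pmod 3$.

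Two facts make the pairs available. First, $W_{\mathcal P}$ may be taken to be a linear subspace containing $w^*$, by Lemma~\ref{lem:frame_common_3space_anchor}, so $w+w^*\in W_{\mathcal P}$ whenever $w\in W_{\mathcal P}$. Second, $W_{\mathcal P}$ splits into cosets of $\langle w^*\rangle$, each of size $q\ge2$, so I can take pairs from distinct cosets: $\{0,w^*\}$ and $\{x,x+w^*\}$ with $x\notin\langle w^*\rangle$.

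Two disjoint pairs therefore exist as soon as $q^{n-2}\ge 4$, or $q\ge 4$. The only problematic situation is $k=2$ with $q^{n-2}=2$, i.e. $(q,n)=(2,3)$. There $N$ would be $0$ and both anchors would be used up. This is exactly the excluded case.

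\textbf{Assembly.} Order the $N$ segments as $S_1,\dots,S_N$ and give $S_j$ the index
\[
i_j\equiv i_1+2(j-1)\pmod 3.
\]
Let the $R$-segments be any $k$ of them, carrying the pair anchors, and let the remaining $B$-segments carry the singleton anchors. Then $\mathrm{term}(S_j)=\mathrm{init}(S_{j+1})$ for each $j$, and $\mathrm{term}(S_N)=\mathrm{init}(S_1)$ because $3\mid N$.

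The encoded collections are pairwise disjoint because each anchor is used exactly once. By Lemma~\ref{lem:anchor_param}, their union is exactly
\[
\mathcal A(P_1)\sqcup\mathcal A(P_2)\sqcup\mathcal A(P_3).
\]
Lemma~\ref{lem:gluing_multiple} then yields the universal cycle.

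\textbf{Interface segments.} Every segment of index $i$ contains, after translation, the consecutive entries
\[
(P_{i+1}\cap P_{i+2})\to w,\qquad\text{i.e. } [u_{i+2}]\to w .
\]
Since $N\ge3$ and the indices cycle through all of $\mathbb Z/3\mathbb Z$, all three directions $[u]\in\mathcal D(\mathcal P)$ occur in a segment $[u]\to v$ with $v$ an affine point.

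\textbf{Main obstacle.} The delicate step is the counting and disjointness in the partition of $W_{\mathcal P}$. I need to check carefully that the residue of $q^{n-2}$ mod $3$ can always be corrected with at most two refinement pairs drawn from distinct $\langle w^*\rangle$-cosets. I also need to check that $N\ge3$ remains, which is where $(q,n)\ne(2,3)$ enters.
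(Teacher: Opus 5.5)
Your proposal is correct and follows the paper's proof essentially step for step. You split $W_{\mathcal P}$ into singletons and $k\equiv|W_{\mathcal P}|\pmod 3$ refinement pairs $\{w,w+w^*\}$, chain the blocks around the directed $3$-cycle of boundary pairs, and your positional rule $i_j\equiv i_1+2(j-1)$ is exactly the Eulerian circuit the paper invokes. The one loose justification is that pairs from distinct $\langle w^*\rangle$-cosets need $n\ge4$. When $n=3$ and $q\equiv2\pmod 3$ with $q\ge5$, $W_{\mathcal P}=\langle w^*\rangle$ is a single coset. There you can use $\{0,w^*\}$ and $\{\alpha w^*,(\alpha+1)w^*\}$ with $\alpha\notin\{0,\pm1\}$. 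Alternatively, argue as the paper does by choosing $w_1\notin\{w_2,\,w_2\pm w^*\}$, which is possible since $|W_{\mathcal P}|\ge4$.
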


\begin{proof}
The proof has two parts. First we choose one block for each translate so
that the boundary pairs form a balanced directed graph; then an Eulerian
circuit in this graph gives the required cyclic concatenation.

Let $\mathcal P=\{P_1,P_2,P_3\}$ be a three-frame with direction set
\[
\mathcal D(\mathcal P)=\{P_i\cap P_{i+1}: i\in \mathbb Z/3\mathbb Z\},
\]
and let $U$ be the common $3$-subspace containing $\mathcal P$. By
construction, each translate $w+\mathcal P$ is encoded by the basic blocks
$B_i(w)$.

\medskip
\noindent\textbf{Step 1: Block selection.}

For each $w\in W_{\mathcal P}$, the three blocks
\[
B_1(w),\ B_2(w),\ B_3(w)
\]
all encode the same translate $w+\mathcal P$. Thus we select exactly one
block for each $w$.

\medskip
\noindent\textbf{Step 2: Eulerian assembly.}

Let $G$ be the directed multigraph whose vertex set is
\[
V(G)=\{(P_i\cap P_{i+1},\,P_{i+1}\cap P_{i+2}): i\in \mathbb Z/3\mathbb Z\},
\]
and where each selected block $B_i(w)$ contributes an edge
\[
(P_i\cap P_{i+1},\,P_{i+1}\cap P_{i+2})
\longrightarrow
(P_{i+2}\cap P_i,\,P_i\cap P_{i+1}).
\]
By Lemma~\ref{lem:block_properties}, these pairs are the initial and
terminal vertices of each block.

\medskip
\noindent\textbf{Case 1: $3\mid |W_{\mathcal P}|$.}

Write $|W_{\mathcal P}|=3m$ and partition
\[
W_{\mathcal P}=W_0\sqcup W_1\sqcup W_2,
\qquad |W_0|=|W_1|=|W_2|=m.
\]
For $w\in W_j$, select the block $B_j(w)$.

Each vertex in $G$ has indegree and outdegree equal to $m$, so $G$ is
balanced. Moreover, the transition $i\mapsto i+2$ generates
$\mathbb Z/3\mathbb Z$, so $G$ is strongly connected. Hence $G$ admits an
Eulerian circuit, which gives a cyclic concatenation of the selected
blocks.

\medskip
\noindent\textbf{Case 2: $|W_{\mathcal P}|\equiv k \pmod 3$, $k\in\{1,2\}$.}

Write $|W_{\mathcal P}|=3m+k$ and choose
\[
w^*\in U\setminus\bigcup_{i=1}^3 P_i.
\]
By Lemma~\ref{lem:frame_common_3space_anchor}, after choosing the common anchor
space appropriately, we may assume $w^*\in W_{\mathcal P}$.

\smallskip
If $k=1$, choose $w_1\in W_{\mathcal P}$. Apply Case~1 to
\[
W_{\mathcal P}\setminus\{w_1+w^*\},
\]
and let $i_0$ be the index assigned to $w_1$. Replace the block
$B_{i_0}(w_1)$ by the refinement block $R_{i_0}(w_1,w^*)$.

\smallskip
If $k=2$, choose $w_1,w_2\in W_{\mathcal P}$ such that
\[
w_1\neq w_2\pm w^*.
\]
Since $(q,n)\neq(2,3)$, we have $|W_{\mathcal P}|\ge 4$, so such a choice
is possible. Then
\[
\{w_1,w_2\}\cap\{w_1+w^*,\,w_2+w^*\}=\varnothing.
\]
Apply Case~1 to
\[
W_{\mathcal P}\setminus\{w_1+w^*,\,w_2+w^*\},
\]
and for each $j=1,2$, let $i_j$ be the index assigned to $w_j$. Replace
$B_{i_j}(w_j)$ by $R_{i_j}(w_j,w^*)$.

\medskip
By Lemma~\ref{lem:refine_cyclic}, each refinement block has the same
initial and terminal vertices as the block it replaces. Thus $G$ remains
balanced and strongly connected, and still admits an Eulerian circuit.

\medskip
\noindent\textbf{Step 3: Conclusion.}

The resulting cyclic concatenation encodes each translate $w+\mathcal P$
exactly once. Each block $B_i(w)$ and $R_i(w,w^*)$ contains a segment of
the form
\[
(P_i\cap P_{i+1}) \to v
\]
for some affine point $v$, so every direction in $\mathcal D(\mathcal P)$
appears as required.
\end{proof}

\subsection{Four-Frame Constructions}

We next adapt the same local strategy to four-frames. The only new issue
is that a direct translate of all four planes has a degeneracy, so one
plane is shifted before the blocks are formed.

Let $\mathcal P'=\{P_1,P_2,P_3,Q\}$ be a four-frame, and set
\[
\mathcal P:=\{P_1,P_2,P_3\}.
\]
Then $\mathcal P$ is a three-frame, and $\mathcal P'$ is contained in a
common $3$-subspace $U$. Let $W_{\mathcal P'}$ be a common linear anchor
set. We begin with a canonical model.

\medskip

\noindent\textbf{Normal form.}
\begin{proposition}[Four-frame normal form]\label{prop:four_plane_model}
Let $\mathcal P=\{P_1,P_2,P_3\}$ be a three-frame, and suppose that
$\mathcal P\cup\{Q\}$ forms a four-frame. Then there exist linearly
independent $u_1,u_2,u_3\in V$ such that
\[
P_i=\langle u_i,u_{i+1}\rangle,\quad i=1,2,3,
\qquad
Q=\{x_1u_1+x_2u_2+x_3u_3:\ x_1+x_2+x_3=0\},
\]
and
\[
\mathcal D(\mathcal P\cup\{Q\})
=
\{[u_1],[u_2],[u_3],[u_1-u_2],[u_2-u_3],[u_3-u_1]\}.
\]
\end{proposition}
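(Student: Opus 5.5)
We first place everything in a common $3$-space and coordinatize it by the three frame directions, then locate $Q$ by its intersections with the $P_i$, and finally rescale the direction vectors so that $Q$ has the symmetric equation.

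\textbf{Setup.} By Lemma~\ref{lem:frame_common_3space_anchor}, all four planes lie in a common $3$-subspace $U$. For the three-frame $\mathcal P$, set $[u_i]=P_{i-1}\cap P_i$ with indices mod $3$. By general position these three lines are pairwise distinct. Moreover $u_3\notin P_1$, since otherwise $u_3\in P_1\cap P_2\cap P_3$. Hence $u_1,u_2,u_3$ are linearly independent, they span $U$, and $P_i=\langle u_i,u_{i+1}\rangle$.

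\textbf{Locating $Q$.} Write $Q=\{x_1u_1+x_2u_2+x_3u_3 : a_1x_1+a_2x_2+a_3x_3=0\}$ for some nonzero linear functional $(a_1,a_2,a_3)$. This is possible because $Q$ is a $2$-subspace of $U$.

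We claim every $a_i\neq0$. If, say, $a_1=0$, then $u_1\in Q$. But $u_1\in P_3\cap P_1$, so $P_1\cap P_3\cap Q\neq\{0\}$, contradicting general position of the triple $\{P_1,P_3,Q\}$. The other coordinates are handled identically, since $u_i$ lies in $P_{i-1}\cap P_i$.

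We now replace $u_i$ by $a_i^{-1}u_i$. This does not change the lines $[u_i]$ or the planes $P_i$. In the new coordinates $Q$ is exactly $x_1+x_2+x_3=0$.

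\textbf{Direction set.} The pairwise intersections among the $P_i$ give $[u_1],[u_2],[u_3]$. For $Q\cap P_1$, a vector in $P_1$ has $x_3=0$, so the condition $x_1+x_2=0$ forces it to be a multiple of $u_1-u_2$. Likewise $Q\cap P_2=[u_2-u_3]$ and $Q\cap P_3=[u_3-u_1]$.

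This gives the stated six-element set. Distinctness of the listed lines is not needed, since $\mathcal D$ is defined as a set; in characteristic $2$ some coincidences may a priori occur.

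The only real point is the nonvanishing of the $a_i$. This is exactly where the four-frame hypothesis, and not just the three-frame hypothesis, is used; the rest is routine linear algebra.
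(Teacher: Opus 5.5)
Your proof is correct and follows the same route as the paper: put all four planes in the common $3$-space $U$, write $Q$ as the kernel of a nonzero functional $\ell$ on $U$, and rescale the $u_i$ so that $\ell(u_i)=1$. You also justify a step the paper leaves implicit, namely $\ell(u_i)\neq 0$, via general position of $\{P_{i-1},P_i,Q\}$; your caveat about characteristic~$2$ is unnecessary, since the six listed lines are always pairwise distinct when $u_1,u_2,u_3$ are independent.
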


\begin{proof}
Let $U=\langle u_1,u_2,u_3\rangle$ be the common $3$-subspace. Since $Q$
meets each $P_i$ in a line, we have $Q\subset U$, hence
$Q=\ker(\ell)$ for some nonzero linear functional $\ell:U\to\mathbb F_q$.
Rescaling $u_i$ so that $\ell(u_i)=1$ yields
\[
Q=\{x_1u_1+x_2u_2+x_3u_3:\ x_1+x_2+x_3=0\}
=\langle u_1-u_2,\;u_2-u_3\rangle.
\]
The direction set follows from pairwise intersections.
\end{proof}

\noindent
For the rest of this subsection, we fix such a choice of
$u_1,u_2,u_3$ and work in this normal form.

\medskip
\noindent\textbf{Choice of the shift parameter.}
Unlike in the three-frame case, the planes in $w+\mathcal P'$ all pass
through the same affine point $w$, so a direct encoding degenerates. To
avoid this, we introduce a shift parameter $w^*\in U$ and encode the
modified family
\[
(w+\mathcal P)\;\cup\;((w-w^*)+Q).
\]
We fix such a $w^*$ once and for all. The precise conditions on $w^*$ will
be specified as needed below. By Lemma~\ref{lem:frame_common_3space_anchor},
we may assume $w^*\in W_{\mathcal P'}$.

With this shift in place, the basic blocks are defined exactly as before,
but with $Q$ translated by $-w^*$.
For $i\in\mathbb Z/3\mathbb Z$, consider the sequence at the origin
\[
P_i,\quad -w^*+Q,\quad P_{i+1},\quad P_{i+2},
\]
which lies in $U$ and whose linear parts, in the displayed cyclic order,
satisfy the hypotheses of
Theorem~\ref{thm:admissibility}. The theorem gives a segment $B_i^{(4)}(0)$.
For $w\in W_{\mathcal P'}$, define
\[
B_i^{(4)}(w):=B_i^{(4)}(0)+w.
\]

To describe this segment explicitly, write
\[
w^*=a_i u_i+a_{i+1}u_{i+1}+a_{i+2}u_{i+2},
\qquad
s=a_i+a_{i+1}+a_{i+2}.
\]
Then
\[
B_i^{(4)}(w):
\quad
(P_{i+2}\cap P_i)
\to (P_i\cap Q)
\to w-s u_{i+1}
\to w-s u_{i+2}
\to (P_{i+1}\cap P_{i+2})
\to (P_{i+2}\cap Q).
\]

The final entry is chosen as $P_{i+2}\cap Q$ so that the terminal
pair matches the initial pair of $B_{i+2}^{(4)}(w')$. This adjustment only
affects the initial triple and does not change the encoded planes.

The consecutive entries are distinct provided $s\neq 0$, equivalently
$w^*\notin Q$. Under this condition, Theorem~\ref{thm:admissibility}
applies.

\begin{lemma}\label{lem:block4_properties}
For each $w\in W_{\mathcal P'}$ and $i\in\mathbb Z/3\mathbb Z$, the block
$B_i^{(4)}(w)$ is a universal segment encoding
\[
(w+\mathcal P)\;\cup\;((w-w^*)+Q).
\]
Moreover, for all $w,w'\in W_{\mathcal P'}$,
\[
\mathrm{term}(B_i^{(4)}(w))
=
\mathrm{init}(B_{i+2}^{(4)}(w')).
\]
\end{lemma}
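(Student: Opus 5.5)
The plan is to apply Theorem~\ref{thm:admissibility} to the four-term sequence at the origin, compute the entries explicitly in the normal form of Proposition~\ref{prop:four_plane_model}, and then translate by $w$. Let $\ell:U\to\mathbb F_q$ be the functional with $Q=\ker\ell$ and $\ell(u_j)=1$ for all $j$. Then $-w^*+Q=\{x\in U:\ell(x)=-s\}$, where $s=\ell(w^*)=a_i+a_{i+1}+a_{i+2}$. This quantity does not depend on $i$.

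\textbf{Hypotheses of the theorem.} The linear parts $(P_i,Q,P_{i+1},P_{i+2})$, read cyclically with $m=4$, form a cycle frame. Indeed, every triple of distinct planes of the four-frame $\mathcal P'$ is in general position, and translating $Q$ does not change linear parts. Since $w^*\in U$, all four affine planes $P_i$, $-w^*+Q$, $P_{i+1}$, $P_{i+2}$ lie in $U$. Hence every consecutive triple lies in the common affine $3$-subspace $U$.

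\textbf{Computing the entries.} The boundary entries are $v_1=P_{i+2}\cap P_i=[u_i]$ and $v_2=P_i\cap Q=[u_i-u_{i+1}]$. The interior entries are computed as follows.
\begin{itemize}
\item $v_3=P_i\cap(-w^*+Q)\cap P_{i+1}$. Here $P_i\cap P_{i+1}=\langle u_{i+1}\rangle$, and the point $tu_{i+1}$ satisfies $\ell=-s$ exactly when $t=-s$. So $v_3=-su_{i+1}$.
\item $v_4=(-w^*+Q)\cap P_{i+1}\cap P_{i+2}$. The same computation on the line $\langle u_{i+2}\rangle$ gives $v_4=-su_{i+2}$.
\item $v_5=P_{i+1}\cap P_{i+2}=[u_{i+2}]$, as prescribed by the theorem.
\end{itemize}
Since $s\neq0$ (that is, $w^*\notin Q$), the points $v_3,v_4$ are distinct, and no point coincides with a direction. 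Thus all consecutive entries are distinct, and the theorem shows that $(v_1,\dots,v_5,P_{i+2}\cap P_i)$ encodes the four planes.

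\textbf{Modified last entry.} The theorem's final entry is $P_{i+2}\cap P_i$, but the block uses $v_6=P_{i+2}\cap Q=[u_{i+2}-u_i]$ instead. This entry occurs only in the last window $(v_4,v_5,v_6)$, so only that window needs rechecking. The point $-su_{i+2}$ lies in $P_{i+2}$. The directions $[u_{i+2}]$ and $[u_{i+2}-u_i]$ are distinct lines of $P_{i+2}$. Hence their affine span is exactly $P_{i+2}$, and the encoding is unchanged.

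\textbf{Universal segment.} The block has six entries and hence exactly four windows. These windows encode the four planes $P_i$, $-w^*+Q$, $P_{i+1}$, $P_{i+2}$. The planes are pairwise distinct because their linear parts are distinct, so each is encoded exactly once. Translating by $w\in W_{\mathcal P'}$ moves affine entries and fixes directions. Translation preserves affine spans, so by Lemma~\ref{lem:translate_cycle} (applied to segments) $B_i^{(4)}(w)$ encodes $(w+\mathcal P)\cup((w-w^*)+Q)$.

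\textbf{Boundary compatibility.} Both boundary pairs consist only of direction entries, so they are independent of $w$. We have
\[
\mathrm{term}(B_i^{(4)}(w))=(P_{i+1}\cap P_{i+2},\,P_{i+2}\cap Q),
\]
while $\mathrm{init}(B_{i+2}^{(4)}(w'))=(P_{i+4}\cap P_{i+2},\,P_{i+2}\cap Q)$. Since indices are taken mod $3$, we have $P_{i+4}=P_{i+1}$, so the two pairs are equal.

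The main obstacle is the modified last entry. That check must confirm that replacing $P_{i+2}\cap P_i$ by $P_{i+2}\cap Q$ still yields a valid last window, so that admissibility is preserved. The check also uses the fact that $s$ is independent of $i$; otherwise the point entries of different blocks would be inconsistent.
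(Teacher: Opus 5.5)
Your proposal is correct and follows essentially the paper's route: you apply Theorem~\ref{thm:admissibility} to $(P_i,-w^*+Q,P_{i+1},P_{i+2})$ inside $U$, compute the point entries $-su_{i+1}$ and $-su_{i+2}$, and replace the last entry by $P_{i+2}\cap Q$, which, as you correctly note, only affects the final window (the paper's text says ``initial triple''). You then translate by $w$ and compare the direction-only boundary pairs, as the paper does; your closing remark that the argument needs $s$ to be independent of $i$ is superfluous, since those boundary pairs contain no point entries.
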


\medskip

\noindent\textbf{Refinement for four-frames.}

As in the three-frame case, the refinement is designed to encode two
adjacent basic blocks without changing the boundary pairs. It suffices to
construct it at the origin. We interlace the
two basic blocks corresponding to $0$ and $w^*$ via the sequence
\[
\begin{aligned}
&P_i,\quad -w^*+Q,\quad P_{i+2},\quad P_{i+1},\quad Q,\\
&w^*+P_i,\quad w^*+P_{i+1},\quad w^*+P_{i+2}.
\end{aligned}
\]
Taking consecutive triple intersections for this interlaced sequence yields
\[
\begin{aligned}
R_i^{(4)}(0,w^*):\quad
&(P_{i+2}\cap P_i)
\to (P_i\cap Q)
\to -s u_i
\to -s u_{i+2}\\
&\to 0
\to -a_{i+2}u_{i+1}+a_{i+2}u_{i+2}\\
&\to a_i u_i-(a_i+a_{i+2})u_{i+1}+a_{i+2}u_{i+2}\\
&\to w^*
\to (P_{i+1}\cap P_{i+2})
\to (P_{i+2}\cap Q).
\end{aligned}
\]

The affine entries are consecutive distinct provided
\[
a_i\neq 0,\qquad a_{i+2}\neq 0,\qquad s\neq 0.
\]
Such a choice of $w^*$ exists for every $q$: if
$\operatorname{char}(\mathbb F_q)\neq 3$, we may take
$a_1=a_2=a_3=1$; if $\operatorname{char}(\mathbb F_q)=3$, we may take
$a_1=a_2=1$ and $a_3=2$.

Since the ordering above is not a simple frame order,
Theorem~\ref{thm:admissibility} does not apply directly. A direct
verification of the consecutive length-$3$ windows shows that this segment
encodes precisely the planes encoded by the two basic blocks
$B_i^{(4)}(0)$ and $B_i^{(4)}(w^*)$. Hence, with this choice of $w^*$, the
segment is admissible and has the required boundary pairs.

\medskip
For general $w\in W_{\mathcal P'}$, define
\[
R_i^{(4)}(w,w^*):=R_i^{(4)}(0,w^*)+w.
\]
By Lemma~\ref{lem:translate_cycle}, this translates the encoded planes.
Thus $R_i^{(4)}(w,w^*)$ encodes the union of the two basic blocks
$B_i^{(4)}(w)$ and $B_i^{(4)}(w+w^*)$, namely
\[
(w+\mathcal P)\cup\{(w-w^*)+Q\}
\cup
((w+w^*)+\mathcal P)\cup\{w+Q\}.
\]
Moreover, it has the same boundary pairs as $B_i^{(4)}(w)$:
\[
\mathrm{init}(R_i^{(4)}(w,w^*))
=
\mathrm{init}(B_i^{(4)}(w)),
\qquad
\mathrm{term}(R_i^{(4)}(w,w^*))
=
\mathrm{term}(B_i^{(4)}(w)).
\]

\begin{lemma}[Local detour / refinement]\label{lem:refine_four}
For every prime power $q$, the segment $R_i^{(4)}(w,w^*)$ has the same
initial and terminal pairs as $B_i^{(4)}(w)$, and its length-$3$ windows
encode
\[
(w+\mathcal P)\cup\{(w-w^*)+Q\}
\cup
((w+w^*)+\mathcal P)\cup\{w+Q\}.
\]
\end{lemma}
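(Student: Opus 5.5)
The plan is to reduce everything to the origin and then check the ten-entry segment $R_i^{(4)}(0,w^*)$ window by window. By Lemma~\ref{lem:translate_cycle}, translating all affine entries by $w$ translates every encoded plane by $w$ and leaves the direction entries fixed. The boundary pairs of $R_i^{(4)}(0,w^*)$ are $\bigl(P_{i+2}\cap P_i,\,P_i\cap Q\bigr)$ and $\bigl(P_{i+1}\cap P_{i+2},\,P_{i+2}\cap Q\bigr)$. These consist only of directions, so they are translation invariant and coincide with those of $B_i^{(4)}(w)$ as displayed earlier. The boundary claim is therefore immediate. What remains is to show that the eight windows of $R_i^{(4)}(0,w^*)$ encode exactly
\[
P_i,\ -w^*+Q,\ P_{i+2},\ P_{i+1},\ Q,\ w^*+P_i,\ w^*+P_{i+1},\ w^*+P_{i+2},
\]
in that order. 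This list is the union of the planes of $B_i^{(4)}(0)$ and $B_i^{(4)}(w^*)$, so it is the stated family with $w=0$.

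I would work in the normal form of Proposition~\ref{prop:four_plane_model}, using coordinates $(x_i,x_{i+1},x_{i+2})$ with respect to $u_i,u_{i+1},u_{i+2}$. In these coordinates $P_i=\{x_{i+2}=0\}$, $P_{i+1}=\{x_i=0\}$, $P_{i+2}=\{x_{i+1}=0\}$ and $Q=\{x_i+x_{i+1}+x_{i+2}=0\}$. Each translate $c+P$ is then the level set of one linear functional, so membership of an entry in a plane is a one-line check. For each window $(v_j,v_{j+1},v_{j+2})$ I would verify two things:
\begin{enumerate}[label=(\roman*)]
\item all three entries lie in the intended plane $\Pi_j$, where a direction entry $[d]$ "lies in" $\Pi_j$ if $d$ is in its linear part;
\item the affine span is $2$-dimensional.
\end{enumerate}
Together these force $\langle v_j,v_{j+1},v_{j+2}\rangle_{\mathrm{aff}}=\Pi_j$.

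For (ii), two situations occur. When the window contains two distinct affine points $p\neq p'$ together with a third entry $x$, it suffices that $x\notin p+\langle p'-p\rangle$ (or, if $x$ is a direction, that $x$ is not the direction $[p'-p]$). When the window contains a direction entry, I would instead use that the two directions in a pair like $P_{i+2}\cap P_i$ and $P_i\cap Q$ are distinct by the four-frame property. The checks for (i) are routine. For example, $-su_i$ lies in $-w^*+Q$ and in $P_i$, while $-a_{i+2}u_{i+1}+a_{i+2}u_{i+2}$ lies in $Q\cap(w^*+P_i)$. The non-collinearity checks in (ii) are exactly where the hypotheses $a_i\neq0$, $a_{i+2}\neq0$, $s\neq0$ enter, since they make consecutive affine entries distinct and prevent three consecutive affine points from being collinear. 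The choice of $w^*$ specified before the lemma (all $a$'s equal to $1$, or $(1,1,2)$ in characteristic $3$) satisfies these conditions for every $q$, which gives the "for every prime power $q$" clause.

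The main obstacle is the middle windows, where the ordering departs from a cycle-frame order. One such window is $(-su_{i+2},\,0,\,-a_{i+2}u_{i+1}+a_{i+2}u_{i+2})$, which must span $Q$. Another is $(-su_i,\,-su_{i+2},\,0)$, which must span $P_{i+1}$. Theorem~\ref{thm:admissibility} cannot be invoked for these, so each window must be checked by hand. I expect this to be where an indexing slip is most likely. For instance, the window for $P_{i+1}$ appears to require an entry with zero $u_i$-coordinate, so I would first recompute each triple intersection $\Pi_{j-2}\cap\Pi_{j-1}\cap\Pi_j$ directly from the level-set descriptions. Only then would I compare with the displayed entries, correcting them if necessary, before doing the span checks. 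Once every window is verified, the eight windows encode eight planes that are pairwise distinct as affine subspaces. The encoding is therefore bijective onto the stated family, and the segment is a universal segment.
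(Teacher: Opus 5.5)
Your strategy is the same as the paper's: translate to the origin, note that both boundary pairs consist only of directions and so are unchanged by translation, and check the eight windows directly in normal-form coordinates. The error is in your last paragraph, where your window-to-plane assignment is shifted by one.

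Since the interior entries are $v_j=\Pi_{j-2}\cap\Pi_{j-1}\cap\Pi_j$, the window $(v_j,v_{j+1},v_{j+2})$ lies in, and encodes, $\Pi_j$. So $(v_3,v_4,v_5)=(-su_i,-su_{i+2},0)$ must span $\Pi_3=P_{i+2}=\{x_{i+1}=0\}$, not $P_{i+1}$. Likewise $(v_4,v_5,v_6)=(-su_{i+2},0,a_{i+2}(u_{i+2}-u_{i+1}))$ must span $\Pi_4=P_{i+1}=\{x_i=0\}$, not $Q$. As you stated them, both claims are false: because $s\neq0$, we have $-su_i\notin P_{i+1}$ and $-su_{i+2}\notin Q$. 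This misalignment is what produced the ``indexing slip'' you anticipated. Recomputing the triple intersections from your level-set descriptions reproduces exactly the displayed entries, so nothing needs correcting.

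With the correct alignment every window checks out, and the hypotheses enter where you expected:
\begin{itemize}
\item Window $3$ spans $P_{i+2}$ when $s\neq0$.
\item Window $4$ spans $P_{i+1}$ when $sa_{i+2}\neq0$.
\item $Q$ is spanned by window $5$, namely $(0,\;a_{i+2}(u_{i+2}-u_{i+1}),\;a_i(u_i-u_{i+1})+a_{i+2}(u_{i+2}-u_{i+1}))$, which requires $a_i\neq0$ and $a_{i+2}\neq0$.
\item Window $6$ lies in $w^*+P_i=\{x_{i+2}=a_{i+2}\}$. It spans this plane because the difference vectors $v_7-v_6=a_i(u_i-u_{i+1})$ and $w^*-v_6=a_iu_i+(a_{i+1}+a_{i+2})u_{i+1}$ have determinant $a_is$.
\item Window $7$ lies in $w^*+P_{i+1}=\{x_i=a_i\}$, and $w^*-v_7=su_{i+1}$ is independent of $u_{i+2}$.
\end{itemize}
Once the windows are re-indexed, your plan is a complete proof and coincides with the paper's ``direct verification''.
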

\medskip

\noindent\textbf{Universal cycles.}

With the four-frame blocks and refinements in place, the universal cycle is
obtained by the same boundary-pair assembly used for three-frames.

\begin{theorem}\label{thm:four_parallel_classes}
Suppose $(n,q)\neq(3,2)$ and let $\mathcal P'=\{P_1,P_2,P_3,Q\}$ be a
four-frame. Then
\[
\bigsqcup_{P\in\mathcal P'} \mathcal A(P)
\]
admits a universal triple-window cycle in which, for every
$[u] \in\mathcal D(\mathcal P')$, there exists an affine point $v$ such that
either $[u]\to v$ or $v\to [u]$ appears.
\end{theorem}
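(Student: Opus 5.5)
The plan mirrors the proof of Theorem~\ref{thm:cyclic_parallel_classes}, with the blocks of Lemma~\ref{lem:block4_properties} and the refinements of Lemma~\ref{lem:refine_four}. First we check that the shifted families partition the target set. By Lemma~\ref{lem:frame_common_3space_anchor}, the set $W_{\mathcal P'}$ is a common anchor set, so by Lemma~\ref{lem:anchor_param} each $\mathcal A(P)$ with $P\in\mathcal P'$ is parametrized by $W_{\mathcal P'}$. For $\mathcal A(Q)$, we use the translated anchor set $W_{\mathcal P'}-w^*$ from Lemma~\ref{lem:affine_anchor}. Since $w^*\in W_{\mathcal P'}$ and $W_{\mathcal P'}$ is a linear subspace, this set equals $W_{\mathcal P'}$. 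Hence the families
\[
(w+\mathcal P)\cup\{(w-w^*)+Q\},\qquad w\in W_{\mathcal P'},
\]
are pairwise disjoint, and their union is $\bigsqcup_{P\in\mathcal P'}\mathcal A(P)$. We fix $w^*$ with $a_i\neq0$ for all $i$ and $s\neq0$, as in the paragraph before Lemma~\ref{lem:refine_four}. Such a $w^*$ avoids $Q$ and every $P_i$, and it serves both the blocks and the refinements.

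Next we assemble the blocks. We form the directed multigraph on the three boundary pairs $(P_{i+2}\cap P_i,\,P_i\cap Q)$, $i\in\mathbb Z/3\mathbb Z$. A block $B_i^{(4)}(w)$ gives an edge from vertex $i$ to vertex $i+2$, by Lemma~\ref{lem:block4_properties}. If $3\mid |W_{\mathcal P'}|$, we split $W_{\mathcal P'}$ into three equal parts and assign one index to each part. The graph is then balanced, and it is strongly connected because $i\mapsto i+2$ generates $\mathbb Z/3\mathbb Z$. An Eulerian circuit together with Lemma~\ref{lem:gluing_multiple} gives the cycle. Otherwise $|W_{\mathcal P'}|=q^{n-2}\equiv k\pmod 3$ with $k\in\{1,2\}$. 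In that case we remove $k$ anchors $w_j+w^*$ with $\{w_1,w_2\}\cap\{w_1+w^*,w_2+w^*\}=\varnothing$, treat the rest as in the balanced case, and replace $B_{i_j}^{(4)}(w_j)$ by $R_{i_j}^{(4)}(w_j,w^*)$. By Lemma~\ref{lem:refine_four}, each replacement has the same boundary pairs. It also absorbs exactly the family attached to the removed anchor $w_j+w^*$, since $(w_j+w^*)-w^*=w_j$ gives the translate $w_j+Q$. For $k=2$ we need $|W_{\mathcal P'}|\ge4$, so that $w_1\ne w_2\pm w^*$ can be chosen; this is where the hypothesis $(n,q)\neq(3,2)$ is used.

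The main obstacle is the interface claim. Every block contains $P_i\cap Q\to w-su_{i+1}$ and $w-su_{i+2}\to P_{i+1}\cap P_{i+2}$. The second shows that each $[u_j]$ appears before an affine point as $v\to[u]$. Every refinement $R_i^{(4)}$ also contains $P_i\cap Q\to -su_i+w$. The directions $[u_j-u_{j+1}]$, equivalently $P_j\cap Q$, must also appear next to an affine point, so every index $i$ has to be used at least once. We guarantee this by making each part of the partition nonempty. This needs $|W_{\mathcal P'}|\ge3$, which also follows from $(n,q)\neq(3,2)$. In the refined cases we check that the chosen $w_j$ can be placed so that all three indices still occur. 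The remaining delicate point is the ``direct verification'' of the refinement windows, which we take from Lemma~\ref{lem:refine_four}. We must also confirm that the junctions between consecutive blocks, where the entries are direction, direction, point, create no spurious windows. These windows are exactly the boundary windows counted inside the blocks by Lemma~\ref{lem:gluing_segment}, so no plane is encoded twice.
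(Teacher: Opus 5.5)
Your proposal is correct and follows the paper's proof. You select one block $B_i^{(4)}(w)$ per anchor using the three-frame distribution, replace selected blocks by $R_i^{(4)}(w_j,w^*)$ to absorb the leftover anchors $w_j+w^*$, and concatenate along an Eulerian circuit on the three boundary pairs. Your extra bookkeeping makes explicit what the paper leaves implicit: $W_{\mathcal P'}-w^*=W_{\mathcal P'}$ covers the $Q$-translates exactly once, and every index $i$ must occur so that all six directions of $\mathcal D(\mathcal P')$ get a direction--point interface, which $R_i^{(4)}$ also supplies via $w+w^*\to[u_{i+2}]$.
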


\begin{proof}
The proof follows the Eulerian assembly scheme used for three-frames. The
only change is that the boundary pairs and the local blocks now come from
the four-frame construction: the basic blocks $B_i^{(4)}(w)$ and the
refinement blocks $R_i^{(4)}(w,w^*)$.

For each $w\in W_{\mathcal P'}$, we select one block among
\[
B_i^{(4)}(w),\qquad i\in\mathbb Z/3\mathbb Z,
\]
replacing it by $R_i^{(4)}(w,w^*)$ when refinement is required.

By Lemmas~\ref{lem:block4_properties} and~\ref{lem:refine_four}, each
selected block has boundary transition
\[
(P_{i+2}\cap P_i,\; P_i\cap Q)
\longrightarrow
(P_{i+1}\cap P_{i+2},\; P_{i+2}\cap Q).
\]
Thus the selected blocks define a directed multigraph whose vertices are
these boundary pairs, and whose edges are the selected blocks.

As before, the indices induce the cyclic transition
$i\mapsto i+2$ on $\mathbb Z/3\mathbb Z$. Hence, after choosing the blocks
with the same distribution as in the three-frame construction, the
resulting directed multigraph is balanced and strongly connected. It
therefore admits an Eulerian circuit, and concatenating the blocks along
this circuit gives a universal triple-window cycle.

The basic and refinement blocks encode exactly the corresponding affine
planes in
\[
\bigsqcup_{P\in\mathcal P'}\mathcal A(P),
\]
so every affine plane is encoded exactly once. Finally, by construction,
each direction $[u]$ in $\mathcal D(\mathcal P')$ appears in some block in a
segment of the form $[u]\to v$ or $v\to [u]$ for an affine point $v$. This
proves the required direction property.
\end{proof}

\subsection{A Cyclic $9$-Frame Construction for $q=3$}

We close the local section with the characteristic-$3$ cyclic construction,
where the block selection already balances the boundary pairs and no
refinement is needed.

\begin{theorem}\label{thm:9_parallel_classes}
Suppose \(\operatorname{char}(\mathbb F_q)=3\), and let
\(
\mathcal P=(P_1,\dots,P_9)
\)
be a cycle frame with direction set
\(
\mathcal D(\mathcal P)=\{[u_1],\dots,[u_9]\},
\)
so that
\(
P_i=\langle u_i,u_{i+1}\rangle
\)
for all \(i\in\mathbb Z/9\mathbb Z\).

Assume there exists a subspace
\(
W_{\mathcal P}\subset V
\)
such that
\(
V=P_i\oplus W_{\mathcal P}
\)
for all \(i\). Then
\[
\mathcal A(P_1)\sqcup\cdots\sqcup\mathcal A(P_9)
\]
admits a universal triple-window cycle in which, for every
\(
[u]\in\mathcal D(\mathcal P),
\)
there exists an affine point \(v\) such that the segment
\(
[u]\to v
\)
appears.
\end{theorem}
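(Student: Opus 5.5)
Following the three-frame proof, I will build basic blocks from Theorem~\ref{thm:admissibility} and assemble them by an Eulerian circuit on boundary pairs. The claim that no refinement is needed suggests the block lengths already match $|W_{\mathcal P}|$.

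\textbf{Step 1: basic blocks.} For $w\in W_{\mathcal P}$ and a starting index $j\in\mathbb Z/9\mathbb Z$, take the cyclic order
\[
w+P_j,\; w+P_{j+1},\;\dots,\; w+P_{j+8}.
\]
I will not take all nine planes through the same point $w$, because then every triple intersection is $w$ and consecutive entries coincide. Instead, I will take the planes $w_i+P_i$ with shifts chosen along the cycle, in the spirit of the $R_i$ refinement: at each step the new plane is translated along the direction it drops.

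Concretely, I use the ninefold periodicity together with the characteristic-$3$ identity $1+1+1=0$. Define a sequence of $27$ planes running around the cycle three times (the case $r=3$ in Theorem~\ref{thm:admissibility}) with
\[
\Pi_{k}=x_k+P_{k \bmod 9},
\]
where $x_k$ advances by multiples of the $u_i$. The triple intersections $v_k$ are then affine points differing by nonzero multiples of $u_k$. The increments are arranged so that after $27$ steps (or after $9$ steps) the accumulated shift returns to the start, which is where characteristic $3$ enters.

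Each such block encodes $w_1+P_1,\dots$ for several translates, and its boundary pairs are $(P_9\cap P_1,P_1\cap P_2)$ and $(P_8\cap P_9,P_9\cap P_1)$ by Theorem~\ref{thm:admissibility}. I will verify that consecutive entries are distinct, and that every three consecutive planes lie in a common affine $3$-subspace. The latter holds because consecutive shifts differ by vectors in $P_{i-1}+P_i+P_{i+1}$, which is $3$-dimensional by general position.

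\textbf{Step 2: bookkeeping over $W_{\mathcal P}$.} Each parallel class $\mathcal A(P_i)$ is indexed by $W_{\mathcal P}$ via Lemma~\ref{lem:linear_anchor}. I need blocks whose encoded translates partition $\bigsqcup_i\mathcal A(P_i)$. The plan is to let the block attached to $w\in W_{\mathcal P}$ encode $(w+c_i)+P_i$ for $i=1,\dots,9$. Here $c_i\in U_i:=P_{i-1}+P_i+P_{i+1}$ are fixed offsets, so that $w\mapsto w+c_i$ permutes the anchors modulo $P_i$. This is automatic, since translation by a fixed vector is a bijection on $\mathcal A(P_i)$. Hence one block per $w$ encodes each plane exactly once.

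\textbf{Step 3: assembly.} All blocks share the same initial pair and the same terminal pair. I will choose the boundary direction entries so that $\mathrm{term}=\mathrm{init}$, adjusting the final entry as in the four-frame case so that the last window still recovers the last plane. Then Lemma~\ref{lem:gluing_multiple} concatenates all $|W_{\mathcal P}|$ blocks cyclically in any order; the graph has one vertex with loops, so it is trivially balanced and connected. Every block contains $[u_1]\to v$ at its start. To get $[u]\to v$ for the other directions, I will rotate the starting index $j$ for some blocks. This is possible because a cyclic shift of the ninefold pattern gives another vertex, and I will distribute the starting indices so that the resulting graph is Eulerian, exactly as in Case~1 of Theorem~\ref{thm:cyclic_parallel_classes}.

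\textbf{Main obstacle.} The hard part is Step 1: finding explicit shifts $c_i$ such that all consecutive triple intersections are points and all consecutive entries are distinct. I will first try $c_i$ given by partial sums $-(u_1+\cdots+u_i)$-type expressions and check the windows directly. Characteristic $3$ should be exactly what makes the nine increments close up consistently without a repeated point.
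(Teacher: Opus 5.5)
Your Step~3 does not work as stated, and Step~1, which carries the construction, is never actually carried out. Consider a block that begins at $P_1$ and ends at $P_9$ (for any $r$) and has direction boundary entries. For such a block, $\mathrm{term}\neq\mathrm{init}$ no matter how you adjust the last entry. The window $(v_8,v_9,v_{10})$ must recover $\Pi_8$, and $v_8,v_9$ already span the direction $[u_8]$, so $v_{10}$ must be a direction of $P_8$. It is also a direction entry of the window recovering $\Pi_9$, so $v_{10}=P_8\cap P_9=[u_9]$ is forced. The first entry of $\mathrm{init}$, on the other hand, must be a direction of $P_1$, and $[u_9]\subset P_1$ would give $[u_9]=P_9\cap P_1=[u_1]$. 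In the three- and four-frame constructions the adjustment works only because the next block begins with the same linear part on which the previous one ends. So there is no single vertex with loops. A block starting at $P_j$ runs from $([u_j],[u_{j+1}])$ to $([u_{j-1}],[u_j])$, and balancing this $9$-cycle of vertices forces every starting index to be used equally often, i.e.\ $9\mid |W_{\mathcal P}|$. That divisibility, not any closing up of increments, is what characteristic $3$ would have to supply in your scheme. It fails when $q=3$ and $\dim V=3$, where $|W_{\mathcal P}|=3$. Such frames exist: take the nine-cycle of the $q=3$ triple construction, viewed in its ambient $3$-space with complement $\langle a_0\rangle$.

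As for Step~1, the closing-up you look for is not needed at all, since blocks are open segments bounded by direction pairs. With $r=1$ you can simply put $v_3=w$ and $v_{i+1}=v_i+t_iu_i$ with $t_i\neq0$ for $3\le i\le 8$. Each window $(v_i,v_{i+1},v_{i+2})$ then spans $v_{i+1}+P_i$, and your Step~2 bookkeeping goes through. With starting indices equidistributed, this repairs your argument whenever $9\mid|W_{\mathcal P}|$, but not in general. The paper avoids both problems by using much shorter blocks. The block $[u_i]\to[u_{i+1}]\to w\to[u_{i+2}]\to[u_{i+3}]$ encodes $w+P_i,\,w+P_{i+1},\,w+P_{i+2}$ through the single point $w$ with no shifting, because $w$ is flanked by direction entries. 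For each $w$, the blocks with indices $j,j+3,j+6$ cover all nine translates through $w$. The boundary transitions are $i\mapsto i+2$, which generates $\mathbb Z/9\mathbb Z$. Balancing then needs only $|W_0|=|W_1|=|W_2|$, i.e.\ $3\mid|W_{\mathcal P}|$, which is exactly what characteristic $3$ gives.
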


\begin{proof}
We adapt the construction of Theorem~\ref{thm:cyclic_parallel_classes} to
cyclic \(9\)-frames over fields of characteristic \(3\).

\noindent\textbf{Step 1: Local blocks.}

For $w\in W_{\mathcal P}$ and $i\in \mathbb Z/9\mathbb Z$, define
\[
\tilde{B}_i(w):
\quad
[u_i]\to [u_{i+1}]\to w\to [u_{i+2}]\to [u_{i+3}].
\]
Each block $\tilde{B}_i(w)$ encodes the three planes
\[
w+P_i,\quad w+P_{i+1},\quad w+P_{i+2}.
\]

For fixed $w$, the three blocks
\[
\tilde{B}_i(w),\quad \tilde{B}_{i+3}(w),\quad \tilde{B}_{i+6}(w)
\]
are disjoint and together encode
\[
w+P_1,\dots,w+P_9.
\]

\medskip
\noindent\textbf{Step 2: Block selection.}

Since $\operatorname{char}(\mathbb{F}_q)=3$, we may partition
\[
W_{\mathcal P}=W_0\sqcup W_1\sqcup W_2,
\qquad |W_0|=|W_1|=|W_2|.
\]
For $w\in W_j$, select the three blocks
\[
\tilde{B}_j(w),\quad \tilde{B}_{j+3}(w),\quad \tilde{B}_{j+6}(w).
\]

\medskip
\noindent\textbf{Step 3: Eulerian assembly.}

Let $G$ be the directed multigraph with vertex set
\[
V(G)=\{([u_i],[u_{i+1}]): i\in \mathbb Z/9\mathbb Z\},
\]
and where each block $\tilde{B}_i(w)$ contributes an edge
\[
([u_i],[u_{i+1}]) \longrightarrow ([u_{i+2}],[u_{i+3}]).
\]

By construction, each vertex has equal indegree and outdegree, so $G$ is
balanced. Moreover, the transition $i\mapsto i+2$ generates
$\mathbb Z/9\mathbb Z$, so $G$ is strongly connected. Hence $G$ admits an
Eulerian circuit.

\medskip
\noindent\textbf{Step 4: Conclusion.}

Concatenating the selected blocks along this circuit yields a universal
cycle. Each affine plane appears exactly once, and each block contains a
segment of the form $[u_i]\to v$, so every direction in
$\mathcal D(\mathcal P)$ appears as required.
\end{proof}

\section{Frame Decompositions in a Layer}

This section constructs connected frame decompositions for the layers
$\Gr_k(2,V)$. In each layer, we partition the relevant \(2\)-subspaces into
three-frames and four-frames while keeping the frame graph connected and
the direction set complete.

The argument combines the local pair and triple constructions with a
translation-based covering procedure. These layer-wise decompositions will
be the input for the global gluing step in the next section.

\subsection{Gluing Principle and Frame Graph}

\begin{definition}[Adjacency]
Two frames $\mathcal P$ and $\mathcal Q$ (including cyclic $9$-frames) are
\emph{adjacent} if
\[
\mathcal D(\mathcal P)\cap \mathcal D(\mathcal Q)\ne\emptyset.
\]
\end{definition}

\begin{definition}[Frame graph]
Let $\mathcal F$ be a collection of frames. The \emph{frame graph}
$G(\mathcal F)$ has vertex set $\mathcal F$, with an edge between
$\mathcal P$ and $\mathcal Q$ whenever they are adjacent.
\end{definition}

This graph records exactly where local cycles can be glued.

\begin{theorem}[Connected gluing principle]\label{thm:connected_gluing_principle}
Let $\mathcal F$ be a collection of frames that partitions the relevant
$2$-subspaces. If the frame graph $G(\mathcal F)$ is connected, then the
corresponding local universal cycles can be glued to form a global
universal cycle.
\end{theorem}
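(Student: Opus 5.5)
The plan is induction on the number of frames, guided by a spanning tree of $G(\mathcal F)$. For each frame $\mathcal P\in\mathcal F$, Theorems~\ref{thm:cyclic_parallel_classes}, \ref{thm:four_parallel_classes} and~\ref{thm:9_parallel_classes} supply a local universal triple-window cycle $\mathcal U_{\mathcal P}$ on $\bigsqcup_{P\in\mathcal P}\mathcal A(P)$. For every $[u]\in\mathcal D(\mathcal P)$, this cycle contains a consecutive pair $[u]\to v$ or $v\to[u]$ with $v$ an affine point. Because $\mathcal F$ partitions the relevant $2$-subspaces, the encoded collections of distinct frames are pairwise disjoint. This is exactly the disjointness hypothesis of Lemma~\ref{lem:gluing_cycle}.

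Next, fix a spanning tree $T$ of the connected graph $G(\mathcal F)$ and order the frames $\mathcal P_1,\dots,\mathcal P_N$ so that each $\mathcal P_j$ with $j\ge2$ is adjacent in $T$ to some earlier $\mathcal P_{i}$ (e.g.\ breadth-first order). Maintain a universal cycle $\mathcal U^{(j)}$ on the union of the first $j$ families. Inductively, require that $\mathcal U^{(j)}$ still contains, for every $i\le j$ and every $[u]\in\mathcal D(\mathcal P_i)$, some interface pair involving $[u]$ and an affine point.

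The key step is gluing $\mathcal P_{j+1}$ along a shared direction $[u]\in\mathcal D(\mathcal P_i)\cap\mathcal D(\mathcal P_{j+1})$. The current cycle contains a pair such as $([u],v)$, and $\mathcal U_{\mathcal P_{j+1}}$ contains $([u],v')$. Apply Lemma~\ref{lem:translate_cycle} with translation $a=v-v'$ to $\mathcal U_{\mathcal P_{j+1}}$. Its encoded collection is translation-invariant, being a union of full parallel classes, so the translate is still a universal cycle on the same family, and it now contains $([u],v)$. Lemma~\ref{lem:gluing_cycle} then concatenates the two cycles into $\mathcal U^{(j+1)}$. Orientation mismatches, where one cycle has $[u]\to v$ and the other $v\to[u]$, are handled by reversing one cycle. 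Reading a sequence backwards preserves the set of length-$3$ windows, and affine span is symmetric in its arguments, so the reversed cycle encodes the same planes.

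The main obstacle is preserving the invariant. Gluing at the pair $(a,b)=([u],v)$ keeps a copy of $(a,b)$ where the two cut cycles are joined; the pairs of each cycle away from the cut are untouched. Still, I must check that no other interface pair is destroyed. The direction entries of the local cycles are also not all of the simple form assumed, so I would verify that the pair $([u],v)$ survives after gluing, whether as the new junction or in its original position. I would also check that the windows straddling the junction are exactly windows of one of the two original cycles. The cut-and-rejoin construction in the proof of Lemma~\ref{lem:gluing_cycle} makes both checks direct: every length-$3$ window of the result is a window of $\mathcal U^{(j)}$ or of the translated $\mathcal U_{\mathcal P_{j+1}}$, which guarantees each plane is encoded exactly once. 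After $N-1$ steps, $\mathcal U^{(N)}$ is the required global universal cycle.
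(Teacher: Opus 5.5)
Your proposal is correct and follows essentially the same argument as the paper: grow the cycle one frame at a time along the connected frame graph, reverse and translate each new local cycle so that a shared interface pair $[u]\to v$ matches, and glue using Lemma~\ref{lem:gluing_cycle}. Your explicit checks fill in details the paper only asserts. First, every consecutive pair of both cycles survives the cut-and-rejoin, so the interface invariant persists. Second, the translated local cycle remains universal because its collection is a union of full parallel classes.
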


\begin{proof}
For each frame $\mathcal P$, let $\mathcal U_{\mathcal P}$ be the local
universal cycle. By construction, for every $[u]\in\mathcal D(\mathcal P)$,
the cycle contains a segment of the form
\[
[u]\to v \quad \text{or} \quad v\to [u].
\]

Start from a frame $\mathcal P_0$ and set
$\mathcal U:=\mathcal U_{\mathcal P_0}$.

Suppose $\mathcal U$ encodes a connected subcollection $\mathcal F_0$.
Let $\mathcal Q\notin\mathcal F_0$ be adjacent to $\mathcal F_0$, so that
\[
[u]\in \mathcal D(\mathcal P)\cap \mathcal D(\mathcal Q)
\]
for some $\mathcal P\in\mathcal F_0$.

Then $\mathcal U$ contains either $[u]\to v$ or $v\to [u]$, and
$\mathcal U_{\mathcal Q}$ contains a corresponding segment involving $[u]$.
By reversing $\mathcal U_{\mathcal Q}$ if necessary, we may match the
orientation. This reversal preserves the encoded affine planes, since the
affine span of a length-three window is independent of the order of its
entries. Translating $\mathcal U_{\mathcal Q}$ so that the affine points
agree, the two cycles share a common consecutive pair and can be glued.

The resulting cycle encodes $\mathcal F_0\cup\{\mathcal Q\}$ and still
contains the required interface segments. Iterating over the connected
frame graph gives the desired global universal cycle.
\end{proof}

\newcommand{\parhead}[1]{\medskip\noindent\textbf{#1}}

\subsection{Layered Decomposition and Translation Framework}

We next organize the $2$-subspaces of $V=\mathbb{F}_q^n$ into layers and
record the translation framework used to build decompositions inside a
fixed layer.

\medskip

\parhead{Layered model.}
Fix a basis $e_1,\dots,e_n$ and set $V_k:=\langle e_1,\dots,e_k\rangle$.
A $2$-subspace $P$ has \emph{index $k$} if
\[
P\subset V_k \quad \text{but} \quad P\not\subset V_{k-1}.
\]
Let $\Gr_k(2,V)$ denote the set of such subspaces. Then
\[
\Gr(2,V)=\bigsqcup_{k=2}^n \Gr_k(2,V).
\]

For $k\ge 3$, write $V_k=V_{k-1}\oplus\langle e_k\rangle$. Every
$P\in\Gr_k(2,V)$ can be written as
\[
P=\langle u, a+e_k\rangle,
\qquad
u\in V_{k-1}\setminus\{0\},\ a\in V_{k-1}.
\]
We call $[u]$ the \emph{linear direction}, $a$ the \emph{affine parameter},
and $[a+e_k]$ the \emph{affine direction}. 

\medskip

\parhead{Translation action.}
For $x\in V_{k-1}$, define
\[
T_x\bigl(\langle u, a+e_k\rangle\bigr)
=
\langle u, a+x+e_k\rangle.
\]
This action fixes linear directions and translates affine parameters.

For a family $\mathcal S$, write
\[
\mathcal D(\mathcal S)
=
\mathcal D_{\mathrm{lin}}(\mathcal S)
\sqcup
\mathcal D_{\mathrm{aff}}(\mathcal S),
\]
where
\[
\mathcal D_{\mathrm{lin}}(\mathcal S)=\{[u]\},\qquad
\mathcal D_{\mathrm{aff}}(\mathcal S)=\{[a+e_k]\}.
\]
Under translation, $\mathcal D_{\mathrm{lin}}$ is invariant, while
$\mathcal D_{\mathrm{aff}}$ is shifted by $x$.

\medskip

\parhead{Parallel classes.}
For a $1$-subspace $L=\langle u\rangle\subset V_{k-1}$, define
\[
\mathcal A(L)
=
\{\,\langle u, a+e_k\rangle : a \in V_{k-1}\,\}.
\]
By Lemma~\ref{lem:linear_anchor}, any complement
$V_{k-1}=L\oplus W$ yields an anchor parametrization
\[
\mathcal A(L)=\{\,\langle u, a+e_k\rangle : a\in W\,\},
\]
in which the affine parameter $a$ is uniquely determined.

\parhead{Two-level decomposition.}
When several parallel classes share the same anchor set \(W\), we use a
second decomposition of \(W\) to separate base data from translations.

\begin{definition}[Two-level anchor decomposition]\label{def:two_level_anchor}
Let $\mathcal A(L_1),\dots,\mathcal A(L_r)$ admit a common anchor set
$W\subset V_{k-1}$. Set
\[
U_0 := \mathrm{Span}(L_1,\dots,L_r)\cap W,
\qquad
W = U_0 \oplus W_0.
\]
We call this a \emph{two-level anchor decomposition}.
\end{definition}

Then each affine parameter decomposes uniquely as $a=u_0+b$, with
$u_0\in U_0$ and $b\in W_0$. Thus it is enough to work on the base subset
supported on $U_0\oplus\langle e_k\rangle$ and then translate it by
elements of \(W_0\).

\medskip

\parhead{Translation lifting.}
The following lemma records this reduction.

\begin{lemma}[Translation lifting]\label{lem:translation_lifting}
Let $\mathcal A(L_1),\dots,\mathcal A(L_r)$ admit a two-level anchor
decomposition $W=U_0\oplus W_0$, and define
\[
\mathfrak X
=
\{\,\langle u_i, u_0+e_k\rangle : 1\le i\le r,\ u_0\in U_0\,\}.
\]
Then
\[
\mathcal A(L_1)\sqcup \cdots \sqcup \mathcal A(L_r)
=
\bigsqcup_{b\in W_0} T_b(\mathfrak X).
\]

If $\mathcal S_{\mathfrak X}$ is a frame partition of $\mathfrak X$, then
\[
\mathcal S
=
\bigsqcup_{b\in W_0} T_b(\mathcal S_{\mathfrak X})
\]
is a frame partition of $\mathcal A(L_1)\sqcup \cdots \sqcup \mathcal A(L_r)$.

Moreover,
\[
\mathcal D_{\mathrm{lin}}(\mathcal S)
=
\mathcal D_{\mathrm{lin}}(\mathcal S_{\mathfrak X}),
\]
and affine directions are translated by $b$. If
$\mathcal S_{\mathfrak X}$ is connected and
$\mathcal D_{\mathrm{lin}}(\mathcal S_{\mathfrak X})\neq\varnothing$,
then $\mathcal S$ is connected.
\end{lemma}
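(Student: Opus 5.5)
The plan has three parts: the set-theoretic identity, the frame-partition claim, and the direction and connectivity claims.

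\emph{The identity.} Fix the common anchor set $W$ and the splitting $W=U_0\oplus W_0$. By the anchor parametrization (Lemma~\ref{lem:anchor_param} and Lemma~\ref{lem:linear_anchor}), every element of $\mathcal A(L_i)$ is $\langle u_i,a+e_k\rangle$ for a unique $a\in W$. Writing $a=u_0+b$ uniquely with $u_0\in U_0$, $b\in W_0$, this element equals $T_b(\langle u_i,u_0+e_k\rangle)$. Hence the union of the $\mathcal A(L_i)$ equals $\bigcup_{b\in W_0}T_b(\mathfrak X)$. The union is disjoint for two reasons. The classes $\mathcal A(L_i)$ are disjoint from one another, since a plane $\langle u,a+e_k\rangle$ with $u\in V_{k-1}$ has $P\cap V_{k-1}=\langle u\rangle$ as its linear direction. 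Within one class, uniqueness of the anchor $a$, together with uniqueness of the decomposition $a=u_0+b$, separates the different values of $b$.

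\emph{Frame partition.} Next I check that $T_b$ carries frames to frames. The map $T_b$ is induced by the linear automorphism $\phi_b$ of $V_k$ that fixes $V_{k-1}$ pointwise and sends $e_k\mapsto b+e_k$. Indeed $\phi_b\langle u,a+e_k\rangle=\langle u,a+b+e_k\rangle$. Since $\phi_b$ is a linear automorphism, it preserves dimensions of intersections and sums. So every triple in general position maps to a triple in general position, and $T_b(\mathcal P)$ is again a frame with $\mathcal D(T_b\mathcal P)=\phi_b(\mathcal D(\mathcal P))$. Applying the disjoint decomposition from the first part, the translates $T_b(\mathcal S_{\mathfrak X})$ partition the whole union.

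\emph{Directions and connectivity.} Because $\phi_b$ fixes $V_{k-1}$, linear directions $[u]\subset V_{k-1}$ are fixed, and affine directions $[a+e_k]$ go to $[a+b+e_k]$. This gives $\mathcal D_{\mathrm{lin}}(\mathcal S)=\mathcal D_{\mathrm{lin}}(\mathcal S_{\mathfrak X})$ and the shift statement. For connectivity, each copy $T_b(\mathcal S_{\mathfrak X})$ is connected, since $\phi_b$ preserves adjacency. Pick a frame $\mathcal P_0\in\mathcal S_{\mathfrak X}$ containing some $[u]\in\mathcal D_{\mathrm{lin}}$. Then $T_b(\mathcal P_0)$ contains the same $[u]$ for every $b$, so all the copies are pairwise adjacent through these frames, and $G(\mathcal S)$ is connected.

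The main obstacle is the precise definition of $\mathcal D(\mathcal S)$ for frames in a layer. Not every pairwise intersection need be a pure linear direction or a pure $[a+e_k]$; it might be, for instance, $[u+c(a+e_k)]$. So the split $\mathcal D_{\mathrm{lin}}\sqcup\mathcal D_{\mathrm{aff}}$ must be read as the intersections lying in $V_{k-1}$ versus those outside it. With that reading, the $\phi_b$ argument still works: directions outside $V_{k-1}$ are transformed by $\phi_b$, and directions inside $V_{k-1}$ are fixed.
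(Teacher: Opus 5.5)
Your proposal is correct and follows the paper's route: the unique decomposition $a=u_0+b$ gives the disjoint-union identity, and your linear automorphism $\phi_b$ (fixing $V_{k-1}$ pointwise, $e_k\mapsto b+e_k$) is a precise version of the ``translation invariance'' the paper cites for the frame, direction, and connectivity claims. The concern in your last paragraph does not arise: any direction in $V_k\setminus V_{k-1}$ rescales to the form $[a'+e_k]$ (e.g.\ $[u+c(a+e_k)]=[c^{-1}u+a+e_k]$ for $c\neq0$), so the linear/affine split is exactly the inside/outside $V_{k-1}$ split you use.
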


\begin{proof}
Every affine parameter $a\in W$ decomposes uniquely as $a=u_0+b$, giving
\[
\langle u_i, a+e_k\rangle
=
T_b(\langle u_i, u_0+e_k\rangle).
\]
This gives the stated decomposition, and the remaining assertions follow
from translation invariance.
\end{proof}

\subsection{Local Frame Constructions in a Layer}

We now build the local decompositions used inside a layer. By
Lemma~\ref{lem:translation_lifting}, each construction may be carried out
on the base family $\mathfrak X$ supported on
$U_0\oplus\langle e_k\rangle$ and then extended by translation.

\subsubsection{Pair Construction}

The pair construction is the basic local move inside a layer. It handles
two parallel classes with directions in a common \(2\)-subspace and
produces frames that already carry the linear directions of those classes.

\begin{proposition}[Pair construction]\label{prop:pair_construction}
Let $L_1=\langle u\rangle$ and $L_2=\langle v\rangle$ be distinct
$1$-subspaces contained in a $2$-subspace $U\subset V_{k-1}$.
Then $\mathcal A(L_1)\sqcup \mathcal A(L_2)$ admits a frame partition
$\mathcal S$ such that
\[
\mathcal D_{\mathrm{lin}}(\mathcal S)=\{[u],[v]\},
\]
and
\[
\mathcal D_{\mathrm{aff}}(\mathcal S)
\supset
\begin{cases}
\{[w+e_k]: w\in W\}, & q \text{ even},\\[1ex]
\{[w+e_k]: w\in W\setminus (a_0+W_0)\}, & q \text{ odd},
\end{cases}
\]
for some two-level anchor decomposition
\[
W=\langle a_0\rangle\oplus W_0.
\]
Moreover, $\mathcal S$ is connected when $q\neq 3$. When $q=3$,
$\mathcal S$ has exactly two connected components, whose affine
parameter sets contain $W_0$ and $-a_0+W_0$, respectively.
\end{proposition}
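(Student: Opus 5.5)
The plan is to reduce to the base family of Lemma~\ref{lem:translation_lifting} and partition it by hand. First I fix the anchor data. Choose a complement $W_0$ of $U$ in $V_{k-1}$ and a vector $a_0\in U\setminus(L_1\cup L_2)$; for definiteness take $a_0=u+v$. Then $W:=\langle a_0\rangle\oplus W_0$ is a common complement of $L_1$ and of $L_2$ in $V_{k-1}$, so it is a common anchor set for $\mathcal A(L_1)$ and $\mathcal A(L_2)$. Moreover $\mathrm{Span}(L_1,L_2)\cap W=U\cap W=\langle a_0\rangle$, so this is a two-level anchor decomposition with $U_0=\langle a_0\rangle$. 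For $t\in\mathbb F_q$ write
\[
A_t=\langle u,\,ta_0+e_k\rangle,\qquad B_t=\langle v,\,ta_0+e_k\rangle .
\]
The base family is then $\mathfrak X=\{A_t,B_t: t\in\mathbb F_q\}$, which consists of $2q$ planes inside the $3$-space $U\oplus\langle e_k\rangle$. By Lemma~\ref{lem:translation_lifting}, it suffices to build a frame partition $\mathcal S_{\mathfrak X}$ of $\mathfrak X$ with the stated direction sets and connectivity, and then translate it by $W_0$.

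Next I record the pairwise intersections, which are elementary to check:
\[
A_t\cap A_s=[u],\qquad B_t\cap B_s=[v]\quad(s\neq t),\qquad A_t\cap B_t=[ta_0+e_k],
\]
while $A_t\cap B_s$ for $s\neq t$ is a line different from $[u]$, $[v]$ and from all the affine directions. Every pair of distinct planes in $\mathfrak X$ meets in a line. The key claim is that for $s\ne t$ the family $\{A_t,B_t,A_s,B_s\}$ is a four-frame. To verify it, take any triple from it. Two members of the triple share one of the lines listed above, and that line is not contained in the third plane: for example, $[u]\not\subset B_s$, and $[ta_0+e_k]\not\subset A_s$ because $a_0\notin L_1$. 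So the triple intersection is $\{0\}$, which is general position.

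The same computation shows that $\{A_{t_1},B_{t_1},A_{t_2}\}$ and $\{B_{t_2},B_{t_3},A_{t_3}\}$ are three-frames. The first has direction set containing $[u]$ and $[t_1a_0+e_k]$ but not $[v]$. The second contains $[v]$ and $[t_3a_0+e_k]$ but not $[u]$.

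The partition then depends on the parity of $q$.
\begin{itemize}
\item If $q$ is even, I pair the elements of $\mathbb F_q$ into $q/2$ pairs $\{t,s\}$ and take the four-frames $\{A_t,B_t,A_s,B_s\}$. They all contain $[u]$ and $[v]$, so the frame graph is complete. Together they carry every affine direction $[ta_0+e_k]$, and after lifting this gives all $[w+e_k]$ with $w\in W$.
\item If $q$ is odd, I use the two three-frames above with $(t_1,t_2,t_3)=(0,1,-1)$, and four-frames on the remaining $q-3$ values, paired arbitrarily. Only the direction $[a_0+e_k]$ (that is, $t=1$) is lost. After lifting this is exactly the coset $a_0+W_0$ excluded in the statement.
\end{itemize}

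For connectivity with $q$ odd and $q\ge5$, there is at least one four-frame. It contains both $[u]$ and $[v]$, so it is adjacent to both three-frames, and $\mathcal S_{\mathfrak X}$ is connected. Since $\mathcal D_{\mathrm{lin}}\ne\varnothing$, Lemma~\ref{lem:translation_lifting} gives connectivity of $\mathcal S$.

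For $q=3$ there are only the two three-frames. I must show that the lifted partition has exactly two components. Every frame in the component of $T_b$ of the first three-frame has linear direction $[u]$ only. Every frame in the component of $T_b$ of the second has $[v]$ only. The affine directions of distinct frames never coincide, since they are translated by distinct $b$ and have different $t$. So translates of the first frame are joined to each other through $[u]$, translates of the second through $[v]$, and there is no edge between the two groups. Their affine parameter sets are $0+W_0$ and $-a_0+W_0$, as claimed.

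I expect the main obstacle to be the general-position check for triples of the form $A_t,B_s,\ast$. This needs the exact position of the line $A_t\cap B_s$, namely that it avoids $[u]$, $[v]$ and the affine directions, and it is precisely where the choice $a_0\notin L_1\cup L_2$ enters. It is also where I must confirm that no accidental shared direction in the $q=3$ case merges the two components.
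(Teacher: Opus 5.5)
Your proposal is correct and is essentially the paper's proof: you use the same anchor data $W=\langle a_0\rangle\oplus W_0$, the same three-frames $\{A_0,B_0,A_1\}$ and $\{A_{-1},B_1,B_{-1}\}$ for odd $q$, and four-frames $\{A_t,B_t,A_s,B_s\}$ (the paper pairs $t$ with $-t$ for odd $q$ and with $t+1$ for even $q$), all lifted by Lemma~\ref{lem:translation_lifting}. Your general-position check and your count of components for the lifted $q=3$ partition go further than the paper, which only notes that no four-frame occurs in the base family. To close that count you still need the cross lines $T_b(A_t)\cap T_b(B_s)=[su+tv+b+e_k]$ (taking $a_0=u+v$): their $u$-coordinate is $0$ for translates of the first three-frame and $\pm1$ for translates of the second, so they never join the two groups.
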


\begin{proof}
Choose a $1$-subspace $L_0=\langle a_0\rangle\subset U$ distinct from
$L_1$ and $L_2$. Then
\[
U=L_1\oplus L_0=L_2\oplus L_0.
\]
Fix a decomposition
\[
V_{k-1}=U\oplus W_0,
\]
and set
\[
W:=L_0\oplus W_0.
\]
Then $W$ is a common anchor set, giving the two-level decomposition
\[
W=L_0\oplus W_0.
\]

Define
\[
\mathfrak X
=
\{\langle u,a+e_k\rangle,\ \langle v,a+e_k\rangle : a\in L_0\}.
\]
By Lemma~\ref{lem:translation_lifting}, it is enough to partition the base
family $\mathfrak X$.

\medskip
\noindent\textbf{Case 1: $q$ odd.}
Define the three-frames
\[
\mathcal T_+
=
\{\langle u,e_k\rangle,\ \langle v,e_k\rangle,\
  \langle u,a_0+e_k\rangle\},
\]
\[
\mathcal T_-
=
\{\langle u,-a_0+e_k\rangle,\
  \langle v,a_0+e_k\rangle,\
  \langle v,-a_0+e_k\rangle\},
\]
and for
\[
a\in L_0\setminus\{0,\pm a_0\}
\]
modulo the relation $a\sim -a$, define
\[
\mathcal Q_a
=
\{\langle u,\pm a+e_k\rangle,\
  \langle v,\pm a+e_k\rangle\}.
\]
These frames partition $\mathfrak X$. Moreover,
\[
\mathcal D_{\mathrm{lin}}(\mathcal S_{\mathfrak X})
=
\{[u],[v]\},
\]
and
\[
\mathcal D_{\mathrm{aff}}(\mathcal S_{\mathfrak X})
\supset
\{[a+e_k]: a\in L_0\setminus\{a_0\}\}.
\]

If $q>3$, some four-frame $\mathcal Q_a$ intersects both
$\mathcal T_+$ and $\mathcal T_-$, so
$\mathcal S_{\mathfrak X}$ is connected.

When $q=3$, the only nonzero elements of $L_0$ are $\pm a_0$, so no
four-frame $\mathcal Q_a$ occurs. Hence $\mathcal T_+$ and
$\mathcal T_-$ form two connected components. Moreover,
\[
\mathcal D_{\mathrm{aff}}(\mathcal T_+)\supseteq \{[e_k]\},
\qquad
\mathcal D_{\mathrm{aff}}(\mathcal T_-)\supseteq \{[-a_0+e_k]\}.
\]
\medskip
\noindent\textbf{Case 2: $q$ even.}
For $a\in L_0$ modulo the relation $a\sim a+a_0$, define
\[
\mathcal Q_a
=
\{
\langle u,a+e_k\rangle,\
\langle u,a+a_0+e_k\rangle,\
\langle v,a+e_k\rangle,\
\langle v,a+a_0+e_k\rangle
\}.
\]
These four-frames partition $\mathfrak X$. Also,
\[
\mathcal D_{\mathrm{lin}}(\mathcal S_{\mathfrak X})
=
\{[u],[v]\},
\]
and
\[
\mathcal D_{\mathrm{aff}}(\mathcal S_{\mathfrak X})
=
\{[a+e_k]: a\in L_0\}.
\]
Since consecutive four-frames overlap, the partition
$\mathcal S_{\mathfrak X}$ is connected.

\medskip
Applying Lemma~\ref{lem:translation_lifting} gives the required partition
\(\mathcal S\).
\end{proof}

\subsubsection{Triple Construction}

Pairing accounts for most parallel classes. When an odd number remains, we
need one construction that treats three parallel classes at once while
preserving the direction information needed for later gluing.

\begin{proposition}[Triple construction]\label{prop:triple_construction}
Let $k\ge 4$ and let $L_i=\langle u_i\rangle$ for $i=1,2,3$ be distinct
$1$-subspaces contained in a $2$-subspace $U\subset V_{k-1}$. Then
\[
\mathcal A(L_1)\sqcup \mathcal A(L_2)\sqcup \mathcal A(L_3)
\]
admits a frame partition $\mathcal S$ such that
\[
\mathcal D_{\mathrm{lin}}(\mathcal S)=\{[u_1],[u_2],[u_3]\}.
\]
If $q=3$, then $\mathcal S$ is connected and
\[
\mathcal D_{\mathrm{aff}}(\mathcal S)\supset \{[w+e_k]:w\in W\}.
\]
\end{proposition}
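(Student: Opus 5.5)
The plan is to follow the template of Proposition~\ref{prop:pair_construction}: fix a common anchor set with a two-level decomposition, partition the base family $\mathfrak X$ by hand, and then apply Lemma~\ref{lem:translation_lifting}.

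\textbf{Preliminary geometry.} Two planes $\langle u_i,a+e_k\rangle$ and $\langle u_j,b+e_k\rangle$ with $i\ne j$ meet in a line only when $a-b\in U$. In that case the line is an affine direction of the form $[c+e_k]$ with $c\in a+U$. Two planes with the same $u_i$ and $a\ne b$ meet in the linear direction $[u_i]$. Hence, to achieve $\mathcal D_{\mathrm{lin}}(\mathcal S)=\{[u_1],[u_2],[u_3]\}$, every frame should contain at least two planes sharing some $u_i$, and each $u_i$ must occur in this way in at least one frame.

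\textbf{Step 1: anchor set.} Build a common anchor set $W\subset V_{k-1}$ for $\mathcal A(L_1),\mathcal A(L_2),\mathcal A(L_3)$ together with a decomposition $W=L_0\oplus W_0$, where the base parameter line $L_0$ supports a family $\mathfrak X$ of $3q$ planes.
\begin{itemize}
\item For $q\ge3$, take $L_0=\langle a_0\rangle\subset U$ distinct from $L_1,L_2,L_3$. This is possible since $U$ has $q+1\ge4$ lines. Then put $W=L_0\oplus W_0$ with $V_{k-1}=U\oplus W_0$, exactly as in the pair construction.
\item For $q=2$ no such $L_0$ exists, because $L_1,L_2,L_3$ are all the lines of $U$. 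Here I would use $k\ge4$, i.e.\ $\dim V_{k-1}\ge3$. I would choose the base parameters in a line not contained in $U$, or else replace the linear anchor by a translated one via Lemma~\ref{lem:affine_anchor}. Since a base line outside $U$ does not lie in a common $3$-space with the planes built on $U$, I would instead take the base support to be a $2$-dimensional $U_0\subset W$ meeting $U$ suitably, with $\mathfrak X$ correspondingly larger. This $q=2$ case is the step I am least sure of.
\end{itemize}

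\textbf{Step 2: partition of $\mathfrak X$.} Partition $\mathfrak X$ into three-frames and four-frames modelled on the frames $\mathcal T_\pm$ and $\mathcal Q_a$ of Proposition~\ref{prop:pair_construction}. The idea is to treat the classes of $u_1,u_2$ by the pair construction. The $q$ planes $\langle u_3,a+e_k\rangle$, $a\in L_0$, are then absorbed by:
\begin{itemize}
\item enlarging some frames into four-frames of type $\{\langle u_1,\cdot\rangle,\langle u_1,\cdot\rangle,\langle u_3,\cdot\rangle,\langle u_3,\cdot\rangle\}$ or $\{\langle u_2,\cdot\rangle,\langle u_2,\cdot\rangle,\langle u_3,\cdot\rangle,\langle u_3,\cdot\rangle\}$, which are again of the pair type for the pairs $(u_1,u_3)$ and $(u_2,u_3)$;
\item using one or two mixed three-frames $\{\langle u_i,a+e_k\rangle,\langle u_i,b+e_k\rangle,\langle u_j,c+e_k\rangle\}$ to fix the parity, since $3q$ planes must be split into parts of size $3$ or $4$.
\end{itemize}
Each candidate frame has to be checked to be in general position. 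For a triple, this reduces to a single non-vanishing condition on a $3\times3$ determinant in the coordinates $u_1,u_2,a_0$ (after writing $u_3=\alpha u_1+\beta u_2$). The choices of $a,b,c\in L_0$ would be made so that this determinant is nonzero. I expect this bookkeeping — a partition of all $3q$ base planes in which every triple is in general position, uniformly in $q$ — to be the main obstacle. My first attempt would be an explicit parametrization $a\mapsto -a$ / $a\mapsto a+a_0$ as in the odd and even cases of the pair construction, with an extra $u_3$-pair frame.

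\textbf{Step 3: the case $q=3$.} Here $\mathfrak X$ has $9$ planes, so I would write down an explicit partition into three three-frames. I would verify two things:
\begin{itemize}
\item the frames are pairwise linked through shared linear directions $[u_i]$ or shared affine directions $[c+e_k]$, giving connectedness;
\item the affine directions cover $[a+e_k]$ for all $a\in L_0$.
\end{itemize}
This avoids the two-component defect of Proposition~\ref{prop:pair_construction} at $q=3$. Finally, Lemma~\ref{lem:translation_lifting} transports the partition to all of $W$. It preserves $\mathcal D_{\mathrm{lin}}$, and it preserves connectedness because $\mathcal D_{\mathrm{lin}}\neq\varnothing$. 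It also translates the affine directions, giving $\mathcal D_{\mathrm{aff}}(\mathcal S)\supset\{[w+e_k]:w\in W\}$.
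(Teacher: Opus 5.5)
\textbf{Step 3 ($q=3$) cannot work, and this is the decisive gap.} Work in the $3$-space $U\oplus\langle e_k\rangle$. Every line there lies in at most three of the nine base planes:
\begin{itemize}
\item $[u_i]$ lies exactly in the three planes $\langle u_i,a+e_k\rangle$, $a\in L_0$;
\item a line $[c+e_k]$, $c\in U$, lies in exactly one base plane of each class, because $U=L_i\oplus L_0$.
\end{itemize}
Two disjoint three-frames sharing a direction would put that line in four base planes. So no two frames of a partition of $\mathfrak X$ are adjacent.

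Frames in different translates $T_b(\mathfrak X)$ lie in the $3$-spaces $U\oplus\langle b+e_k\rangle$, which pairwise meet in $U$. Hence they can share only linear directions. A three-frame carries at most one linear direction, since two would need two planes from each of two classes.

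Since $9=3+3+3$ forces three-frames, the frames carrying $[u_1]$, $[u_2]$, $[u_3]$ always lie in three different components, whatever partition of the nine base planes you choose. So the two-component defect you hoped to avoid is unavoidable with three- and four-frames. The translation-lifting lemma also never gets its hypothesis, because the base partition itself is never connected.

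The missing idea, which the paper uses, is to arrange all nine base planes into a single \emph{cyclic $9$-frame}. This is allowed because Theorem~\ref{thm:9_parallel_classes} supplies the local cycle in characteristic $3$. If its consecutive intersections contain $[u_1],[u_2],[u_3]$, all translates share these directions and the lift is connected.

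For the affine coverage, the cyclic order must also make, for each $c\in L_0$, two of the three planes $\langle u_i,c+e_k\rangle$ adjacent. The order printed in the paper fails this for $c=2a_0$: those planes sit in positions $5,7,9$. An order that works, with $u_3=u_1+u_2$ and $a_0=u_2-u_1$, is
\[
\begin{aligned}
&\langle u_1,e_k\rangle,\ \langle u_1,2a_0+e_k\rangle,\ \langle u_2,a_0+e_k\rangle,\ \langle u_2,e_k\rangle,\ \langle u_3,e_k\rangle,\\
&\langle u_1,a_0+e_k\rangle,\ \langle u_3,a_0+e_k\rangle,\ \langle u_3,2a_0+e_k\rangle,\ \langle u_2,2a_0+e_k\rangle .
\end{aligned}
\]
All its consecutive triples are in general position, and its consecutive intersections include $[u_1],[u_2],[u_3],[e_k],[a_0+e_k],[2a_0+e_k]$.

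\textbf{Your $q=2$ plan also has a real gap.} There is no common anchor set at all for the three classes. Such a set would meet each coset $x+U$ in exactly two points, one per coset of $L_1$. But any two points of $x+U$ differ by a nonzero vector of $U=L_1\cup L_2\cup L_3$, so they lie in a common coset of some $L_i$. Hence neither the two-level framework nor any translate of it is available. The paper instead takes the six planes with parameters in $U$ (two per class) and translates by a complement $W_0$ of $U$ in $V_{k-1}$.

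Moreover, these six planes split into two three-frames, each carrying at most one linear direction. So lifting a single base partition can never produce all three $[u_i]$. The paper gives two partitions, with linear directions $\{[u_1],[u_3]\}$ and $\{[u_2],[u_3]\}$, and places them on different translates. This is exactly where $k\ge4$, i.e.\ $|W_0|\ge2$, is needed.

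\textbf{For $q>3$, your outline is workable but not carried out.} The general-position checks you were worried about are automatic. Inside $U\oplus\langle e_k\rangle$, two distinct planes of one class together with one or two planes of another class always form a frame: in the basis $u_i,u_j,e_k$ the relevant determinant is the difference of the two parameters in the repeated class. So only the counting remains. The paper does it by writing $q=4m+r$, using $r$ three-frames with one plane from each class, and handling the remaining parameters by three pair-type families on halves $D_1,D_2$.
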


\begin{proof}
Write $V_{k-1}=U\oplus W_0$, and let $\mathfrak X$ be the subset of
$\mathcal A(L_1)\sqcup \mathcal A(L_2)\sqcup \mathcal A(L_3)$ supported on
$U\oplus\langle e_k\rangle$. Then
\[
\mathcal A(L_1)\sqcup \mathcal A(L_2)\sqcup \mathcal A(L_3)
=
\bigsqcup_{b\in W_0} T_b(\mathfrak X),
\]
so it remains to partition $\mathfrak X$.

\medskip
\noindent\textbf{Case 1: $q=2$.}
In this case $U$ contains exactly the three lines $L_1,L_2,L_3$, so no
additional line in $U$ is available to form a two-level anchor decomposition.
We therefore construct the partition directly.

The set $\mathfrak X$ consists of six planes and admits a partition
\[
T_1=\{\langle u_1,e_k\rangle,\ \langle u_1,u_2+e_k\rangle,\ \langle u_2,e_k\rangle\},
\]
\[
T_2=\{\langle u_3,e_k\rangle,\ \langle u_3,u_1+e_k\rangle,\ \langle u_2,u_3+e_k\rangle\},
\]
with $\mathfrak X=T_1\sqcup T_2$. Each of $T_1,T_2$ is a three-frame, and
\[
\mathcal D_{\mathrm{lin}}(T_1\cup T_2)=\{[u_1],[u_3]\}.
\]

A second partition $\mathfrak X=T_1'\sqcup T_2'$ is obtained by interchanging
$u_1$ and $u_2$, yielding
\[
\mathcal D_{\mathrm{lin}}(T_1'\cup T_2')=\{[u_2],[u_3]\}.
\]
Since $k\ge 4$, we have $|W_0|\ge 2$, and applying these two decompositions
to distinct translates ensures that all three directions
$[u_1],[u_2],[u_3]$ occur.

For the remaining translates, choose either of the two decompositions
arbitrarily. Since the translates \(T_b(\mathfrak X)\) are disjoint, this
assigns a frame partition to every translate and hence gives a frame
partition of the full union of the three parallel classes.

\medskip
\noindent\textbf{Case 2: $q\ge 3$.}
Choose a $1$-subspace $L_0=\langle a_0\rangle\subset U$ with
$L_0\notin\{L_1,L_2,L_3\}$, and set $W=L_0\oplus W_0$. Then
\[
\mathfrak X
=
\{\langle u_i,a+e_k\rangle : i=1,2,3,\ a\in L_0\}.
\]

\smallskip
\noindent\emph{Subcase $q=3$.}
Here $|L_0|=3$. After rescaling $u_1,u_2$, we may assume
\[
u_3=u_1+u_2,
\qquad
a_0=-u_1+u_2.
\]
Then $\mathfrak X$ consists of $9$ planes arranged in the cyclic order
\[
\begin{aligned}
&\langle u_1,e_k\rangle \to \langle u_2,e_k\rangle \to \langle u_2,a_0+e_k\rangle \\
&\to \langle u_3,a_0+e_k\rangle \to \langle u_3,2a_0+e_k\rangle \to \langle u_1,a_0+e_k\rangle \\
&\to \langle u_1,2a_0+e_k\rangle \to \langle u_3,e_k\rangle \to \langle u_2,2a_0+e_k\rangle .
\end{aligned}
\]
Each consecutive triple forms a three-frame, yielding a cyclic $9$-frame.
Hence $\mathcal S_{\mathfrak X}$ is connected and
\[
\mathcal D_{\mathrm{lin}}(\mathcal S_{\mathfrak X})
=
\{[u_1],[u_2],[u_3]\},
\qquad
\mathcal D_{\mathrm{aff}}(\mathcal S_{\mathfrak X})
\supseteq
\{[a+e_k]:a\in L_0\}.
\]

\smallskip
\noindent\emph{Subcase $q>3$.}
Write $q=4m+r$ with $r\in\{0,1,3\}$ and choose $R\subset L_0$ with $|R|=r$.

\smallskip
\noindent\emph{Three-frames.}
For $a\in R$, define
\[
\mathcal F_a
=
\{\langle u_1,a+e_k\rangle,\ \langle u_2,a+e_k\rangle,\ \langle u_3,a+a_0+e_k\rangle\}.
\]

\smallskip
\noindent\emph{Four-frames.}
Let $D=L_0\setminus R$ and partition $D=D_1\sqcup D_2$ with $|D_1|=|D_2|$.
Since $U=L_1\oplus L_0=L_2\oplus L_0=L_3\oplus L_0$, the parametrization
of $\mathcal A(L_3)$ differs by a translation along $L_0$, realized by
the shift $a\mapsto a_0+a$. The remaining planes decompose into
\[
\mathcal A(L_1,D_1)\sqcup \mathcal A(L_2,D_1),\quad
\mathcal A(L_2,D_2)\sqcup \mathcal A(L_3,a_0+D_2),\quad
\mathcal A(L_3,a_0+D_1)\sqcup \mathcal A(L_1,D_2).
\]
Each pair consists of two parallel classes indexed by a set of even
cardinality; pairing anchors $a$ with $-a$ (or $a$ with $a+a_0$ in
characteristic $2$) yields a partition into four-frames.

Since $D\neq\varnothing$, at least one four-frame appears, ensuring
\[
\mathcal D_{\mathrm{lin}}(\mathcal S_{\mathfrak X})
=
\{[u_1],[u_2],[u_3]\}.
\]
\end{proof}

\subsection{Connected Frame Decomposition of $\Gr_k(2,V)$}

We now assemble the local constructions into a connected core whose affine
directions cover all parameters in \(V_{k-1}\). This core is then extended
to a full decomposition of the layer.

The first step is to build a connected family that sees every affine
direction in the layer. Once such a core is available, all other frames can
be attached to it through shared directions.

\begin{theorem}[Affine covering and connectivity]
\label{thm:affine_covering}
Let $k\ge 4$, and fix a decomposition
\[
V_{k-1}=\langle \ell\rangle \oplus W.
\]
Then there exists a connected family $\mathcal S_{\mathrm{core}}$ of frames
such that
\[
\mathcal D_{\mathrm{aff}}(\mathcal S_{\mathrm{core}})
=
\{[x+e_k]:x\in V_{k-1}\}.
\]
\end{theorem}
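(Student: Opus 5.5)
We propose to build the core from pair constructions (Proposition~\ref{prop:pair_construction}), or triple constructions when $q=3$, applied to parallel classes $\mathcal A(L)$ with $L$ inside a fixed $2$-subspace containing $\ell$. The affine directions produced by each such construction cover a large part of the anchor set, and translation lifting lets us move these coverages around.

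\textbf{Setup.} Since $k\ge4$, we have $\dim V_{k-1}\ge3$, so we can choose a $2$-subspace $U=\langle \ell,m\rangle\subset V_{k-1}$. Then $W$ may be taken so that $U\cap W=\langle a_0\rangle$ for a suitable $a_0$, and we fix a complement $W=\langle a_0\rangle\oplus W_0$.

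\textbf{$q$ even.} Apply Proposition~\ref{prop:pair_construction} to $L_1=\langle\ell\rangle$ and $L_2=\langle m\rangle$, with anchor set arranged to be $W$ (shifting anchors via Lemma~\ref{lem:affine_anchor} if necessary). This gives a connected $\mathcal S$ whose affine directions contain $\{[w+e_k]:w\in W\}$. For $x\in V_{k-1}$, write $x=t\ell+w$ with $w\in W$. The direction $[x+e_k]$ is generally not one of the $[w+e_k]$, so we must also take the affine directions of the other classes. Concretely, $\mathcal A(\langle\ell\rangle)$ consists of the planes $\langle\ell,w+e_k\rangle$, and the affine direction attached to a frame is $[a+e_k]$ for the parameter $a$ of each plane. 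Since $\langle \ell, w+e_k\rangle$ contains $[t\ell+w+e_k]$, the natural choice of representatives gives only $[w+e_k]$, and we need genuinely different planes.

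\textbf{Sweeping the remaining directions.} We therefore take several pair constructions, one for each pair $\langle m_j\rangle,\langle m_j'\rangle$ of lines in a complement-type family. The lines not containing $\ell$ have affine parameters ranging over complements that include $\ell$-components. Given $L=\langle u\rangle$ and any complement $W_L$ of $L$ in $V_{k-1}$, the affine directions $[a+e_k]$ with $a\in W_L$ appear. Choosing two pair constructions whose anchor sets $W_{L}$ and $W_{L'}$ together cover all of $V_{k-1}$ in their parameters, together with translates, yields every $[x+e_k]$. Since every $x$ can be rescaled into some complement, a family of lines $L$ with complements whose union is $V_{k-1}$ suffices. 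For instance, take all $L\subset V_{k-1}$ with $L\ne\langle\ell\rangle$, paired inside $2$-subspaces through a fixed auxiliary line. For odd $q$, the missing coset $a_0+W_0$ in Proposition~\ref{prop:pair_construction} is covered by a second pair construction with the roles of $a_0$ and $-a_0$ exchanged, or by a differently chosen pair. For $q=3$ we use the triple construction of Proposition~\ref{prop:triple_construction}, which is connected and covers all of $W$.

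\textbf{Connectivity.} Different pair constructions are joined through shared directions. Constructions sharing a linear direction $[u]$ are adjacent. When no common linear direction exists, we link them through a common affine direction $[x+e_k]$, which exists because their parameter sets overlap. For $q=3$, the two components of a pair construction, with parameters $W_0$ and $-a_0+W_0$, are each joined to some other connected piece covering those cosets.

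\textbf{Main obstacle.} We expect the hardest step to be guaranteeing that every $[x+e_k]$ appears while keeping the union connected, especially in the exceptional cases $q=3$ and $q$ odd, where the pair construction misses a coset. We would first verify the choices explicitly, by computation, for $k=4$ and $q\in\{2,3\}$.
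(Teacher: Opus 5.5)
Your proposal leaves the actual content of the theorem unconstructed: it never produces a specific family that reaches every $[x+e_k]$ and is connected. The claims you make in its place do not hold as stated.

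\textbf{Covering.} The pair and triple constructions only guarantee $\mathcal D_{\mathrm{aff}}\supset\{[w+e_k]:w\in W_L\}$ for their anchor hyperplane $W_L$. For odd $q$ the pair construction also loses one coset of $W_0$. Two hyperplanes, linear or translated, cannot cover $V_{k-1}$ once $q>2$, so ``two pair constructions \dots together with translates'' is not enough. Counting points, you need at least $q$ such constructions, on pairwise disjoint sets of parallel classes, with coordinated anchors, and you never specify them. Your concrete instance (all $L\neq\langle\ell\rangle$, paired inside $2$-subspaces through a fixed line) assigns no anchors. For odd $q$ it cannot even be split into pairs, since such a $2$-subspace contains $q$ lines other than the fixed one.

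\textbf{Connectivity.} This step is likewise unsupported. Pieces need not share a linear direction, and ``their parameter sets overlap'' is exactly what fails for translate-based coverings, where the pieces sit over disjoint cosets of one hyperplane. Your closing paragraph concedes that these are the unresolved steps.

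The idea you are missing is the paper's. The map $T_x$ is a linear automorphism of $V_k$ fixing $V_{k-1}$ pointwise, so it preserves every parallel class and shifts affine directions by $x$. Write $W=\langle a_0\rangle\oplus W_0$ and fix $w_0\in W_0\setminus\{0\}$. For fixed $j$, the lines $L_{i,j}=\langle\ell+ia_0+jw_0\rangle$ lie in $\langle \ell+jw_0,a_0\rangle$ and share the anchor $W$. So a connected family $\mathcal S_j$ covering $W$ comes from a pair construction ($q$ even), the triple construction ($q=3$), or two pair constructions with auxiliaries $a_0$ and $2a_0$ (odd $q>3$). Then $T_{j\ell}(\mathcal S_j)$ covers $W+j\ell$, and these $q$ pieces use disjoint classes and together cover $V_{k-1}$.

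Because those cosets are disjoint, the paper adds one connector. It is a pair construction on two lines inside $W$ whose anchor complement $W_0'$ contains $\ell$. Its component over $W_0'$ meets every $W+j\ell$ at $j\ell$; for $q=3$, the second component, over $-a_0+W_0'$, also meets every coset.

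Alternatively, your ``union of complements'' idea can be completed as follows. Take as anchors the $q+1$ linear hyperplanes $H_0,\dots,H_q$ through a fixed codimension-$2$ subspace $Z$, indices mod $q+1$. For $H_i$, take the lines in a $2$-subspace $U_i\subset H_{i+1}$ with $U_i\not\subset Z$. Then $U_i\cap H_i=U_i\cap Z$ serves as the auxiliary line, and the lines used are disjoint across $i$. Every anchor is linear, so every piece contains $[e_k]$ and connectivity is automatic. Either route needs this level of specification, which your proposal does not supply.
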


\begin{proof}
We first cover all affine parameters and then connect the resulting
families in the frame graph.

\medskip
\noindent \textbf{Step 1: Covering $W$.}
Write $W = \langle a_0 \rangle \oplus W_0$ and fix $w_0 \in W_0\setminus\{0\}$.
For $i,j \in \mathbb F_q$, define
\[
L_{i,j} = \langle \ell + i a_0 + j w_0 \rangle.
\]
Then each $L_{i,j}$ is not contained in $W$, and for fixed $j$, the lines
$\{L_{i,j}\}_{i\in\mathbb F_q}$ are distinct. Moreover, for $i\neq i'$,
\[
\mathrm{Span}(L_{i,j},L_{i',j})
=
\langle \ell + j w_0,\ a_0\rangle,
\qquad
V_{k-1} = \mathrm{Span}(L_{i,j},L_{i',j}) \oplus W_0.
\]
Thus $W=\langle a_0\rangle \oplus W_0$ provides a common two-level anchor
decomposition for every pair $(L_{i,j},L_{i',j})$, and each such pair
satisfies the hypotheses of Proposition~\ref{prop:pair_construction}.
Fix $j\in\mathbb F_q$ and construct a family $\mathcal S_j$:

\begin{itemize}
\item If $q$ is even, one pair $(L_{0,j},L_{1,j})$ yields a connected family
$\mathcal S_j$ on
\[
\mathcal A_j := \mathcal A(L_{0,j}) \sqcup \mathcal A(L_{1,j}),
\]
covering all affine parameters in $W$.

\item If $q=3$, applying the triple construction to
$\{L_{0,j},L_{1,j},L_{2,j}\}$ yields a connected family $\mathcal S_j$ on
\[
\mathcal A_j := \mathcal A(L_{0,j}) \sqcup \mathcal A(L_{1,j}) \sqcup \mathcal A(L_{2,j}),
\]
covering all affine parameters in $W$.

\item If $q$ is odd and $q>3$, each pair $(L_{i,j},L_{i',j})$ covers all
affine parameters in $W$ except one coset of $W_0$. For the pair
$(L_{0,j},L_{1,j})$, use the auxiliary representative $a_0$, so the
missing coset is $a_0+W_0$. For the pair $(L_{2,j},L_{3,j})$, use the
auxiliary representative $2a_0$, so the missing coset is $2a_0+W_0$.
These cosets are distinct because \(a_0\notin W_0\). Hence the two pair
constructions together cover $W$. Since they share affine directions on the
remaining cosets, the union is connected, yielding a family
$\mathcal S_j$ on
\[
\mathcal A_j := \mathcal A(L_{0,j}) \sqcup \mathcal A(L_{1,j}) \sqcup \mathcal A(L_{2,j}) \sqcup \mathcal A(L_{3,j}).
\]
\end{itemize}

In all cases,
\[
\mathcal D_{\mathrm{aff}}(\mathcal S_j)
\supset
\{[w+e_k]: w\in W\}.
\]

\medskip
\noindent
\textbf{Step 2: Extending to all affine parameters.}
Since each $\mathcal A_j$ is a union of parallel classes, for each
$j\in\mathbb F_q$ we apply the translation $T_{j\ell}$ to $\mathcal S_j$.
This preserves each parallel class (and hence $\mathcal A_j$), while shifting
affine parameters from $W$ to the coset $W+j\ell$. Thus
$T_{j\ell}(\mathcal S_j)$ is again a frame partition on $\mathcal A_j$, and
\[
\mathcal D_{\mathrm{aff}}(T_{j\ell}(\mathcal S_j))
\supset
\{[w+j\ell+e_k]: w\in W\}.
\]

Since
\[
V_{k-1}
=
\bigsqcup_{j\in\mathbb F_q} (W+j\ell),
\]
the families $\{T_{j\ell}(\mathcal S_j)\}_{j\in\mathbb F_q}$ together cover
\(\bigsqcup_{j}\mathcal A_j\) and hence all affine parameters:
\[
\mathcal D_{\mathrm{aff}}\!\left(\bigcup_{j} T_{j\ell}(\mathcal S_j)\right)
=
\{[x+e_k]: x\in V_{k-1}\}.
\]
Each $T_{j\ell}(\mathcal S_j)$ is connected, so at this stage there are at
most \(q\) connected components.

\medskip
\noindent
\textbf{Step 3: Connecting the components.}
Choose distinct $1$-subspaces $L_1,L_2\subset W$ and a complement
$W_0'$ such that
\[
V_{k-1}=L_1\oplus L_2\oplus W_0',
\qquad
\ell\in W_0'.
\]
Then $W_0'$ serves as the second component of a two-level anchor
decomposition associated with the pair $(L_1,L_2)$.

Applying Proposition~\ref{prop:pair_construction} to $(L_1,L_2)$ yields
a frame partition $\mathcal S'$. We include all of $\mathcal S'$ in the
core.  When $q\neq 3$, the family $\mathcal S'$ is connected.  When
$q=3$, the family $\mathcal S'$ has two connected components: one whose
affine parameters contain $W_0'$, and one whose affine parameters contain
a coset of the form $-a_0+W_0'$, where $a_0\in W$ is the auxiliary
representative used in the pair construction.

In every case, the component of $\mathcal S'$ containing $W_0'$ satisfies
\[
\mathcal D_{\mathrm{aff}}
\supset
\{[x+e_k]: x\in W_0'\}.
\]
For odd \(q>3\), this follows from Proposition~\ref{prop:pair_construction}
because, for the auxiliary representative \(a_0\) chosen in that
application, the only omitted coset is the nonzero coset \(a_0+W_0'\).

Since $\ell\in W_0'$, every coset $W+j\ell$ intersects $W_0'$.
Therefore, for each $j$, the translated component
\[
T_{j\ell}(\mathcal S_j)
\]
shares an affine direction with the component of $\mathcal S'$ containing
$W_0'$.  If $q=3$, the second component of $\mathcal S'$ also attaches to
the translated families: since $a_0\in W$, every coset $W+j\ell$ also
intersects $-a_0+W_0'$.  Hence all components of $\mathcal S'$ are adjacent
to the previously constructed families.

Combining these families yields a connected family
$\mathcal S_{\mathrm{core}}$ with the desired affine-direction coverage.
\end{proof}

The covering theorem supplies the connected core. The remaining task is to
turn this core into a partition of the whole layer without losing
connectivity or direction completeness.

\begin{theorem}[Connected frame decomposition of a layer]\label{thm:index-k-frame-decomposition}
Let $V=\mathbb F_q^n$ with $n \ge k\ge 4$.
Then $\Gr_k(2,V)$ admits a partition $\mathcal S$ into three-frames and
four-frames such that
\begin{itemize}
\item the frame graph is connected, and
\item $\mathcal D(\mathcal S)$ consists of all directions in $V_k$.
\end{itemize}
\end{theorem}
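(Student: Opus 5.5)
We build the decomposition of \(\Gr_k(2,V)\) by organizing it into parallel classes over the linear directions, using the connected core of Theorem~\ref{thm:affine_covering}, and attaching every remaining frame to it through shared affine directions. Every \(P\in\Gr_k(2,V)\) is \(\langle u,a+e_k\rangle\) with \([u]\) a \(1\)-subspace of \(V_{k-1}\), uniquely determined as \(P\cap V_{k-1}\). Hence
\[
\Gr_k(2,V)=\bigsqcup_{L\subset V_{k-1}}\mathcal A(L),
\]
with \(L\) ranging over the \(1\)-subspaces of \(V_{k-1}\). The task is therefore to partition this set of lines into groups of two or three, each contained in a common \(2\)-subspace, and to treat each group by the pair or triple construction.

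\emph{Step 1: the core.} Fix \(\ell\) and \(W\) as in Theorem~\ref{thm:affine_covering}. Take the lines used in its proof: the \(L_{i,j}\) (two, three, or four per \(j\) according to \(q\)) and the two lines \(L_1,L_2\subset W\) of Step~3. Their parallel classes are fully partitioned by \(\mathcal S_{\mathrm{core}}\). This is because each \(T_{j\ell}(\mathcal S_j)\) is a frame partition of \(\mathcal A_j\), and \(\mathcal S'\) partitions \(\mathcal A(L_1)\sqcup\mathcal A(L_2)\). Also, the lines \(L_{i,j}\) (not in \(W\)) are distinct from \(L_1,L_2\) (in \(W\)). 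So \(\mathcal S_{\mathrm{core}}\) is a connected frame partition of the union of these classes, and its affine directions exhaust \(\{[x+e_k]:x\in V_{k-1}\}\).

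\emph{Step 2: the remaining lines.} Let \(\mathcal L\) be the set of \(1\)-subspaces of \(V_{k-1}\) not used in Step~1. We partition \(\mathcal L\) into pairs, plus at most one triple, each lying in a common \(2\)-subspace. We apply Proposition~\ref{prop:pair_construction} to each pair and Proposition~\ref{prop:triple_construction} (valid since \(k\ge4\)) to the triple. We choose the pairing inductively: pick any remaining line \(L\), and pair it with another remaining line \(L'\). Any two distinct lines span a \(2\)-subspace, so the coplanarity requirement for a pair is automatic. The only constraint is parity. If \(|\mathcal L|\) is odd, three lines must be kept in a common plane for the triple, which we do by choosing them first from any \(2\)-subspace containing at least three unused lines. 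Such a plane exists when \(q\ge2\) and \(|\mathcal L|\) is not tiny; otherwise we first adjust the core's choice of lines. In every pair or triple construction the resulting family contains at least one four-frame or three-frame whose affine direction set is nonempty, and every affine direction \([x+e_k]\) is already a direction of the core. Hence every frame produced is adjacent to the core, except possibly components carrying no affine directions. No such components occur, because every frame in these constructions contains planes \(\langle u,a+e_k\rangle\) of index \(k\), and hence an affine direction. This is the point to verify carefully, especially for \(q=3\): there the pair construction splits into two components, but each component still carries affine directions (\([e_k]\), resp. \([-a_0+e_k]\), after translation), hence meets the core.

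\emph{Step 3: directions.} Connectivity of the whole frame graph then follows, since each component is joined to the connected core. For the direction set, the affine directions of index \(k\) are covered by the core. The linear directions are exactly the \([u]\) for all lines \(L\subset V_{k-1}\), since each pair or triple construction realizes \(\mathcal D_{\mathrm{lin}}\) equal to all its lines. Together these give all \(1\)-subspaces of \(V_k=V_{k-1}\oplus\langle e_k\rangle\). Here we need the core's linear directions to include its own lines, which holds by the propositions used to build it.

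The main obstacle is the combinatorial bookkeeping in Step~2. First, the leftover count \(|\mathcal L|\) must admit a decomposition into pairs and at most one coplanar triple. Second, the triple construction for \(q\ne3\) only asserts the linear direction set and no affine coverage. Its frames still contain index-\(k\) affine directions lying in the core's direction set, so adjacency holds, but checking this for every case of Proposition~\ref{prop:triple_construction} (including \(q=2\) with the two alternating decompositions) is where the argument needs the most care.
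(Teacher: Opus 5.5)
Your overall route matches the paper's: take the core from Theorem~\ref{thm:affine_covering}, pair the remaining parallel classes with Proposition~\ref{prop:pair_construction}, use one triple when parity forces it, and attach everything to the core through affine directions. Your attachment argument holds, and it is cleaner than the paper's once you add the one-line justification. Each $P_i\in\Gr_k(2,V)$ meets $V_{k-1}$ in a single line. So if two pairwise intersections in a frame lay in $V_{k-1}$, they would share some $P_i$ and both equal $P_i\cap V_{k-1}$, which violates general position. Hence every frame has at least two affine directions, and since the core has all of them, every frame is adjacent to the core. This also settles your worry about the triple construction giving no affine coverage.

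The gap is Step~2, which is where the paper's proof does its real work. You assert that three unused coplanar lines exist ``when $|\mathcal L|$ is not tiny'' and otherwise ``adjust the core.'' You neither carry out the parity count nor say what the adjustment is, and an adjustment is genuinely needed. For $q=2$, $k=4$, the space $V_3=\mathbb F_2^3$ has $7$ lines and the core uses $2q+2=6$ of them. The single leftover class can be neither paired nor put into a triple, so your mechanism fails outright.

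The paper's fix changes Step~3 of the core. Instead of the pair $(L_1,L_2)$, it applies the triple construction to all three lines $L_1,L_2,L_3$ of a $2$-subspace $U\subset W$; for $k=4$ these are exactly the lines left after the $L_{i,j}$. It takes a two-level decomposition with $\ell\in W_0$. It puts $T_1\sqcup T_2$ (linear directions $[u_1],[u_3]$) on $\mathfrak X$ and $T_1'\sqcup T_2'$ (linear directions $[u_2],[u_3]$) on $T_\ell(\mathfrak X)$. This joins the two coset components $W$ and $\ell+W$ through the shared $[u_3]$, realizes all three linear directions, and makes the number of unused classes even for every $k$ when $q=2$. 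You would need to supply this, or an equivalent modification, and re-verify connectivity and affine coverage of the modified core.

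For $q>2$ you also owe the count. The core uses $2q+2$, $3q+2$, or $4q+2$ classes according as $q$ is even, $q=3$, or $q>3$ is odd. Also $N_k=1+q+\cdots+q^{k-2}$ is odd for $q$ even and $\equiv k-1\pmod 2$ for $q$ odd. So an odd remainder occurs exactly for $q$ even, for $q=3$ with $k$ odd, and for odd $q>3$ with $k$ even. In each case you must exhibit three unused coplanar lines:
\begin{itemize}
\item For $q$ even with $q>2$: take $L_{\alpha,0},L_{\alpha,1},L_{\alpha,\alpha}$ with $\alpha\notin\mathbb F_2$.
\item For $q=3$ with $k$ odd: take a $2$-subspace of $W$ containing at most one of $L_1,L_2$, which exists because $\dim W=k-2\ge 3$.
\item For odd $q>3$: take any $2$-subspace of $W$, since it has $q+1\ge 6$ lines and at most two are used.
\end{itemize}
These checks are routine, but without them and without the $q=2$ modification, the partition of the leftover classes into pairs and at most one coplanar triple is not established.
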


\begin{proof}
We start from the connected affine-direction core supplied by the previous
theorem. The rest of the proof attaches the remaining parallel classes to
this core while keeping the number of unused classes even, except in the
small \(q=2\) case where the core itself is adjusted.

Let
\[
V_{k-1} = \langle \ell \rangle \oplus W
\]
be as in Theorem~\ref{thm:affine_covering}.

\medskip
\noindent
\textbf{Step 1: Core construction.}
By Theorem~\ref{thm:affine_covering}, there exists a connected family
$\mathcal S_{\mathrm{core}}$ such that
\[
\mathcal D_{\mathrm{aff}}(\mathcal S_{\mathrm{core}})
=
\{[x+e_k] : x \in V_{k-1}\}.
\]
Thus the core already contains all affine directions.

\medskip
\noindent
\textbf{Step 2: Pairing and triple reduction.}
The number of parallel classes in $\Gr_k(2,V)$ is
\[
N_k=1+q+\cdots+q^{k-2}.
\]
By Theorem~\ref{thm:affine_covering}, the core construction uses
\[
2q+2,\qquad 3q+2,\qquad 4q+2
\]
parallel classes, according as $q$ is even, $q=3$, or $q>3$ is odd.

If the number of remaining parallel classes is even, we pair them and apply
Proposition~\ref{prop:pair_construction} to each pair. Each such
construction produces affine directions already contained in
$\mathcal D_{\mathrm{aff}}(\mathcal S_{\mathrm{core}})$, so
the resulting frames are adjacent to the core family.

It remains to consider when the number of remaining classes is odd. Since
\[
N_k\equiv
\begin{cases}
1 \pmod 2, & q \text{ even},\\
k-1 \pmod 2, & q \text{ odd},
\end{cases}
\]
the odd-remainder cases are handled as follows.

\begin{itemize}
\item If $q$ is even and $q>2$, choose unused directions
\[
L_{\alpha,0},\quad L_{\alpha,1},\quad L_{\alpha,\alpha},
\qquad
\alpha\in\mathbb F_q\setminus\mathbb F_2,
\]
where $L_{i,j}$ is as in Theorem~\ref{thm:affine_covering}. These three
directions lie in a common $2$-subspace. Applying the triple construction
removes the parity obstruction.

\item If $q=3$ and $k$ is odd, then
\[
\dim W=k-2\ge 3.
\]
Only two directions in $W$ have been used by the connecting family.
Choose a $2$-subspace $U\subset W$ containing at most one of them. Since
$U$ contains $q+1=4$ one-dimensional subspaces, at least three remain
unused.

\item If $q>3$ is odd and $k$ is even, then $\dim W=k-2\ge 2$.
Choose any $2$-subspace $U\subset W$. At most two of its directions have
already been used, while $U$ contains $q+1\ge 6$ directions. Hence at
least three directions in $U$ remain unused.

\item If $q=2$, there may be no unused triple outside the core.
We therefore modify Step~3 of the core construction by using the triple
construction instead of the pair construction. This uses three parallel
classes rather than two, so the number of remaining classes becomes even.

More explicitly, after Step~2 there are two components corresponding to
the two cosets
\[
W,\qquad \ell+W
\]
of $W$ in $V_{k-1}$. Choose a $2$-subspace $U\subset W$ with three
one-dimensional subspaces
\[
L_1,\ L_2,\ L_3.
\]
Let \(\mathfrak X\) denote the planes in
\[
\mathcal A(L_1)\sqcup \mathcal A(L_2)\sqcup \mathcal A(L_3)
\]
supported on \(U\oplus\langle e_k\rangle\).
The triple construction yields two distinct decompositions of $\mathfrak X$
with linear directions $\{[u_1],[u_3]\}$ and $\{[u_2],[u_3]\}$, whose affine
parameters lie in $W$. Apply the first decomposition to $\mathfrak X$, and
the second to $T_\ell(\mathfrak X)$.
Since $\ell \notin W$, it is transversal to $W$ and hence to $U\subset W$.
Thus we may choose a two-level anchor decomposition whose second component
contains $\ell$. Then the translation $T_\ell$, applied to the local partition
$\mathcal S_{\mathfrak X}$ as in the translation lifting construction,
shifts affine parameters from $W$ to $\ell+W$.

Thus the two resulting families contain frames, with affine parameters in
$W$ and $\ell+W$, that share the common direction $[u_3]$. These adjacent
frames connect the two components. The remaining unused parallel classes
can then be paired and handled by Proposition~\ref{prop:pair_construction}.
\end{itemize}

\medskip
\noindent
\textbf{Step 4: Direction completeness.}
Each parallel class is used exactly once, so every linear direction in
$V_{k-1}$ appears in $\mathcal S$. Together with the affine directions from
$\mathcal S_{\mathrm{core}}$, we obtain
\[
\mathcal D(\mathcal S)
=
\{\text{all directions in } V_k\}.
\]

\medskip
\noindent
Combining the preceding steps, the partition $\mathcal S$ is connected and
has the full direction set.
\end{proof}

\section{Global Assembly and Universal Cycles}

We now assemble the layer-wise constructions into a universal cycle for all
affine planes in \(V=\mathbb F_q^n\), with \(n\ge 4\).  The organization of
this section follows the layer decomposition
\[
\Gr(2,V)=\bigsqcup_{k=2}^n \Gr_k(2,V).
\]
The high layers \(\Gr_k(2,V)\), \(k\ge 4\), have already been handled
uniformly for all prime powers \(q\).  The only part that requires separate
attention is the lower block
\[
\Gr_{\le 3}(2,V):=\Gr_2(2,V)\sqcup \Gr_3(2,V).
\]
For \(q>2\), this lower block admits a uniform frame-decomposition treatment.
For \(q=2\), the dimension-three local frame construction is too small, and
we instead use an explicit universal cycle in \(\mathbb F_2^3\), translated to
the cosets of \(V_3\) in \(V\).

In both cases, once the lower block is constructed, it can be glued to the
\(k=4\) layer through a common direction in \(V_3\subset V_4\).  The higher
layers are then glued successively through common directions.

\subsection{The High Layers}

For each \(k\ge 4\), Theorem~\ref{thm:index-k-frame-decomposition} gives a
frame partition \(\mathcal S_k\) of \(\Gr_k(2,V)\) such that the frame graph is
connected and
\[
\mathcal D(\mathcal S_k)=\{\text{all directions in }V_k\}.
\]
This applies for every prime power \(q\).

These high-layer decompositions are compatible with one another.  Indeed,
fix a direction, say \([e_1]\).  Since \([e_1]\subset V_k\) for every
\(k\ge 4\), each \(\mathcal S_k\) contains a frame whose direction set contains
\([e_1]\).  Therefore
\[
\mathcal S_{\ge 4}:=\bigcup_{k=4}^n \mathcal S_k
\]
has connected frame graph.  Moreover, since the top layer \(\mathcal S_n\)
has full direction set in \(V_n=V\), the union \(\mathcal S_{\ge 4}\) already
contains every direction in \(V\).

Thus, after the high layers have been assembled, the only remaining task is
to attach the lower block \(\Gr_{\le 3}(2,V)\) to \(\mathcal S_{\ge 4}\).
Because \(\mathcal S_4\) contains all directions in \(V_4\), any lower-layer
frame whose direction set contains a direction in \(V_3\) is adjacent to
\(\mathcal S_4\).

\subsection{The Lower Layers for \(q>2\)}

We first handle the lower block
\[
\Gr_{\le 3}(2,V)=\Gr_2(2,V)\sqcup \Gr_3(2,V)
\]
when \(q>2\).  Here \(\Gr_2(2,V)\) consists of the single plane
\(V_2=\langle e_1,e_2\rangle\), while \(\Gr_3(2,V)\) is organized by the
parallel classes corresponding to the \(1\)-subspaces of \(V_2\).

\begin{proposition}[Lower-layer frame partition for \(q>2\)]
\label{prop:lower_layers_q_gt_2}
Let \(V=\mathbb F_q^n\) with \(n\ge 4\) and \(q>2\).  Then
\(\Gr_{\le 3}(2,V)\) admits a partition \(\mathcal S_{\le 3}\) into local
frames.  Every frame in \(\mathcal S_{\le 3}\) has a direction lying in
\(V_3\).  Consequently, every frame in \(\mathcal S_{\le 3}\) is adjacent to
the high-layer decomposition \(\mathcal S_4\) of \(\Gr_4(2,V)\).
\end{proposition}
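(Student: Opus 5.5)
The plan is to observe first that the adjacency claim is automatic, and then to reduce the proposition to a purely combinatorial partition problem inside \(V_3\). Every plane in \(\Gr_{\le 3}(2,V)\) is a \(2\)-subspace of \(V_3\). Hence for any frame \(\mathcal P\subset\Gr_{\le 3}(2,V)\), each direction \(P_i\cap P_j\in\mathcal D(\mathcal P)\) lies in \(V_3\subset V_4\). By Theorem~\ref{thm:index-k-frame-decomposition}, \(\mathcal D(\mathcal S_4)\) contains all directions in \(V_4\), so every such frame is adjacent to \(\mathcal S_4\). It therefore remains only to partition the \(q^2+q+1\) planes of \(V_3\) into frames for which Section~3 supplies local cycles. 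These are three-frames, four-frames, and, in characteristic \(3\), cyclic \(9\)-frames. Projectively, this means splitting the lines of \(\mathrm{PG}(2,q)\) into triangles, quadrilaterals with no three lines concurrent, and, when \(q=3^e\), possibly cyclic \(9\)-configurations.

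For the bulk I will use the layered description with \(k=3\). The layer \(\Gr_3(2,V)\) is the disjoint union of the \(q+1\) parallel classes \(\mathcal A(L)\), \(L\subset V_2\), each of size \(q\), with anchor set a line \(L_0\subset V_2\) and trivial \(W_0\). The partition parts of Proposition~\ref{prop:pair_construction} and of Case~2 of Proposition~\ref{prop:triple_construction} (\(q\ge 3\)) do not use \(k\ge4\). Only the \(q=2\) case and the direction-coverage claims use it, and these are irrelevant here. So I will apply them verbatim on \(\mathfrak X\), the whole layer being \(\mathfrak X\) itself.
\begin{itemize}
\item For \(q\) odd, \(q+1\) is even, so I pair the classes.
\item For \(q\) even with \(q>2\), \(q+1\) is odd, so I use one triple construction on three lines of \(V_2\) and pair the rest.
\end{itemize}
This partitions \(\Gr_3(2,V)\) and leaves only the single plane \(V_2=\Gr_2(2,V)\).

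The main obstacle is absorbing \(V_2\), since no frame above contains it and \(V_2\) meets every plane of \(\Gr_3\) in a line of \(V_2\). My approach is a local surgery on one frame. I take a three-frame \(\{P,P',P''\}\) produced above whose three pairwise intersections are not all in \(V_2\). Concretely, in the odd case I use \(\mathcal T_-=\{\langle u,-a_0+e_3\rangle,\langle v,a_0+e_3\rangle,\langle v,-a_0+e_3\rangle\}\). I then try to add \(V_2\) to obtain a four-frame. A triple containing \(V_2\) fails general position exactly when two of the other planes meet in a line of \(V_2\). In \(\mathcal T_-\) the two \(v\)-planes meet in \([v]\subset V_2\), so this particular choice fails, and I would instead re-choose the frames.

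My first attempt is the following. I choose the pair construction for \((L_1,L_2)\) with auxiliary line \(L_0\), and replace its two three-frames \(\mathcal T_\pm\) together with \(V_2\) (\(7\) planes) by a three-frame plus a four-frame. For instance, I would take \(V_2\) together with \(\langle u,a_0+e_3\rangle\), \(\langle v,-a_0+e_3\rangle\) and \(\langle v,e_3\rangle\) or a similar choice. The check is that the pairwise intersections of the three non-\(V_2\) planes avoid \(V_2\) and that the complementary triple is in general position. Both checks are direct coordinate computations in the basis \(u,v,e_3\).

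In the even case the same idea applies to a four-frame \(\mathcal Q_a\) of some pair, combined with \(V_2\). For \(q=3\), as a fallback, I can use the count \(13=4+9\): one four-frame containing \(V_2\) and one cyclic \(9\)-frame, the latter modeled on the \(9\)-cycle of Proposition~\ref{prop:triple_construction}. I expect the explicit verification of general position for the modified frames containing \(V_2\) to be the only real work. Connectivity needs no argument, since adjacency to \(\mathcal S_4\) was already settled in the first paragraph.
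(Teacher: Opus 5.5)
Your outline matches the paper's. Adjacency to $\mathcal S_4$ is automatic because every plane of $\Gr_{\le3}(2,V)$ lies in $V_3$. You then run the pair construction, and Case~2 of the triple construction, on $\Gr_3(2,V)$ with $k=3$, and absorb $V_2$ by a local surgery. The gap is that this surgery is the entire content of the proposition, and none of the configurations you propose for it works.

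The constraint you are missing is that a frame containing $V_2$ can contain at most one plane from each parallel class $\mathcal A(L)$, $L\subset V_2$. Two planes of $\mathcal A(L)$ meet in $L$, so their triple intersection with $V_2$ is $L\neq0$. The seven planes $\mathcal T_+\cup\mathcal T_-\cup\{V_2\}$ involve only the $u$- and $v$-classes. Hence $V_2$ can only lie in a \emph{three}-frame with one $u$-plane and one $v$-plane. Your four-frame $V_2,\langle u,a_0+e_3\rangle,\langle v,-a_0+e_3\rangle,\langle v,e_3\rangle$ violates your own criterion, since the two $v$-planes meet in $[v]\subset V_2$.

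The split that works, and the one the paper uses, is $\{\langle u,e_3\rangle,\langle v,e_3\rangle,V_2\}$ together with $\mathcal Q_{a_0}=\{\langle u,\pm a_0+e_3\rangle,\langle v,\pm a_0+e_3\rangle\}$. Normalize $a_0=u+v$. In the basis $u,v,e_3$, the normals of the four planes of $\mathcal Q_{a_0}$ are $(0,1,\mp1)$ and $(1,0,\mp1)$, and every $3\times3$ minor of this normal matrix equals $\pm2$. So $\mathcal Q_{a_0}$ is a four-frame exactly because $q$ is odd, and this also covers $q=3$.

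For even $q$, combining $V_2$ with a single four-frame $\mathcal Q_a$ fails by counting. Five planes cannot be split into three- and four-frames, and by the constraint above $V_2$ can take at most one plane from each of the two classes. For even $q\ge4$ the pair and triple constructions produce only four-frames, so you must merge $V_2$ with two of them. The paper takes $\mathcal Q_0$ and $\mathcal Q_{\alpha a_0}$ with $\alpha\notin\{0,1\}$ and re-splits the nine planes into three three-frames:
$\{\langle e_1,e_3\rangle,\langle e_2,e_3\rangle,V_2\}$, then $\{\langle e_1,a_0+e_3\rangle,\langle e_2,\alpha a_0+e_3\rangle,\langle e_2,(\alpha+1)a_0+e_3\rangle\}$, and the image of the latter under $e_1\leftrightarrow e_2$.

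Your $q=3$ fallback $13=4+9$ is also invalid. Suppose four lines of $\mathrm{PG}(2,3)$, one of them $V_2$, form a four-frame. A point missing the other nine lines would have all four of its lines in the four-frame, making them concurrent. Hence the nine lines cover every point, so no common complement $W_{\mathcal P}$ exists. Theorem~\ref{thm:9_parallel_classes} therefore does not apply.
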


\begin{proof}
The parallel classes in \(\Gr_3(2,V)\) are indexed by the \(1\)-subspaces of
\(V_2=\langle e_1,e_2\rangle\).  Choose two of them,
\[
L_1=\langle e_1\rangle,\qquad L_2=\langle e_2\rangle,
\]
and reserve them to be treated together with the unique plane
\(V_2\in\Gr_2(2,V)\).

The remaining \(q-1\) directions in \(V_2\) are handled by the pair and triple
constructions from Section~4.  If \(q\) is odd, then \(q-1\) is even, so we
partition the remaining directions into pairs and apply the pair construction
to each pair.  If \(q\) is even and \(q>2\), then \(q-1\) is odd; we choose
one triple of remaining directions and apply the triple construction to it,
and then pair the remaining directions.  These constructions partition all
parallel classes in \(\Gr_3(2,V)\) except the two reserved classes
\(\mathcal A(L_1)\) and \(\mathcal A(L_2)\).

It remains to combine these two reserved classes with \(V_2\).  Set
\[
a_0=e_1+e_2,
\qquad W=\langle a_0\rangle.
\]
Then
\[
V_2=L_1\oplus W=L_2\oplus W,
\]
and the two reserved parallel classes are parametrized by
\[
\mathfrak X
=
\mathcal A(L_1)\sqcup\mathcal A(L_2)
=
\{\langle e_1,a+e_3\rangle,
  \langle e_2,a+e_3\rangle : a\in \langle a_0\rangle\}.
\]
We now partition \(\mathfrak X\cup\{V_2\}\) into frames.

\medskip
\noindent
\textbf{Case 1: \(q\) odd.}
In the pair construction, \(\mathfrak X\) is partitioned into
\[
\mathcal T_+,
\qquad
\mathcal T_-,
\qquad
\mathcal Q_a.
\]
We replace the two three-frames \(\mathcal T_+\) and \(\mathcal T_-\) by
\[
\mathcal T_+'
=
\{\langle e_1,e_3\rangle,
  \langle e_2,e_3\rangle,
  \langle e_1,e_2\rangle\},
\]
together with the four-frame
\[
\mathcal Q_{a_0}
=
\{\langle e_1,a_0+e_3\rangle,
  \langle e_1,-a_0+e_3\rangle,
  \langle e_2,a_0+e_3\rangle,
  \langle e_2,-a_0+e_3\rangle\}.
\]
Thus \(\mathcal T_+\cup\mathcal T_-\) is replaced by
\(\mathcal T_+'\cup\mathcal Q_{a_0}\), inserting
\(V_2=\langle e_1,e_2\rangle\) while preserving a partition of
\(\mathfrak X\cup\{V_2\}\).

\medskip
\noindent
\textbf{Case 2: \(q\) even and \(q>2\).}
In the pair construction, the relevant four-frames have the form
\[
\mathcal Q_a
=
\{\langle e_1,a+e_3\rangle,
  \langle e_1,a+a_0+e_3\rangle,
  \langle e_2,a+e_3\rangle,
  \langle e_2,a+a_0+e_3\rangle\},
\]
where \(a\in\langle a_0\rangle\) modulo the relation \(a\sim a+a_0\).
Choose \(\alpha\in\mathbb F_q\setminus\{0,1\}\).  We replace the two
four-frames \(\mathcal Q_0\) and \(\mathcal Q_{\alpha a_0}\), together with
\(V_2=\langle e_1,e_2\rangle\), by the following three three-frames:
\begin{align*}
\mathcal Q_0'&=
\{\langle e_1,e_3\rangle,
  \langle e_2,e_3\rangle,
  \langle e_1,e_2\rangle\},\\
\mathcal Q_1'&=
\{\langle e_1,a_0+e_3\rangle,
  \langle e_2,\alpha a_0+e_3\rangle,
  \langle e_2,(\alpha+1)a_0+e_3\rangle\},\\
\mathcal Q_2'&=
\{\langle e_2,a_0+e_3\rangle,
  \langle e_1,\alpha a_0+e_3\rangle,
  \langle e_1,(\alpha+1)a_0+e_3\rangle\}.
\end{align*}
These three-frames are disjoint and together cover exactly the planes of
\(\mathcal Q_0\cup\mathcal Q_{\alpha a_0}\cup\{V_2\}\).  Hence they give the
required replacement inside \(\mathfrak X\cup\{V_2\}\).

Combining the reserved-class construction with the pair and triple
constructions on the remaining parallel classes gives a frame partition
\(\mathcal S_{\le 3}\) of \(\Gr_{\le3}(2,V)\).  Every frame constructed above
lies in \(V_3\), so its direction set contains a direction in \(V_3\).  Since
\(\mathcal S_4\) contains all directions in \(V_4\), each such frame is
adjacent to some frame in \(\mathcal S_4\).
\end{proof}

The preceding proposition gives the lower-layer input for \(q>2\).  Combining
it with the high layers gives the following global frame decomposition.

\begin{theorem}[Global frame decomposition for \(q>2\)]
\label{thm:global_frame_decomposition}
Let \(V=\mathbb F_q^n\) with \(n\ge4\) and \(q>2\).  Then \(\Gr(2,V)\) admits
a frame partition \(\mathcal S\) such that
\begin{itemize}
\item the associated frame graph is connected, and
\item
\[
\mathcal D(\mathcal S)=\{\text{all directions in }V\}.
\]
\end{itemize}
\end{theorem}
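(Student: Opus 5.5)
The plan is to take \(\mathcal S\) as the union of the lower-block partition and the high-layer partitions,
\[
\mathcal S:=\mathcal S_{\le 3}\sqcup \mathcal S_{\ge 4},
\qquad
\mathcal S_{\ge 4}=\bigcup_{k=4}^n \mathcal S_k,
\]
where \(\mathcal S_{\le 3}\) comes from Proposition~\ref{prop:lower_layers_q_gt_2} and each \(\mathcal S_k\) from Theorem~\ref{thm:index-k-frame-decomposition}. We then check three things: that \(\mathcal S\) is a partition, that its frame graph is connected, and that its direction set is everything.

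\textbf{Partition.} The layers \(\Gr_k(2,V)\), \(2\le k\le n\), are pairwise disjoint and their union is \(\Gr(2,V)\). Proposition~\ref{prop:lower_layers_q_gt_2} partitions \(\Gr_2(2,V)\sqcup\Gr_3(2,V)\). For each \(4\le k\le n\), Theorem~\ref{thm:index-k-frame-decomposition} (which needs \(n\ge k\ge4\)) partitions \(\Gr_k(2,V)\). Hence \(\mathcal S\) is a frame partition of \(\Gr(2,V)\).

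\textbf{Connectivity.} Each frame graph \(G(\mathcal S_k)\) with \(k\ge4\) is connected. Since \([e_1]\in\mathcal D(\mathcal S_k)\) for every \(k\ge 4\), any two of these layers contain adjacent frames, so \(G(\mathcal S_{\ge4})\) is connected. This is exactly the argument recorded at the start of this section.

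By Proposition~\ref{prop:lower_layers_q_gt_2}, every frame in \(\mathcal S_{\le3}\) has a direction in \(V_3\subset V_4\). Because \(\mathcal D(\mathcal S_4)\) contains all directions of \(V_4\), each such frame is adjacent to some frame of \(\mathcal S_4\). So every vertex of \(G(\mathcal S)\) lies in the component containing \(\mathcal S_{\ge4}\), and \(G(\mathcal S)\) is connected. No connectivity among the lower-layer frames themselves is required.

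\textbf{Directions.} We have \(\mathcal D(\mathcal S)\supseteq\mathcal D(\mathcal S_n)\), which consists of all directions in \(V_n=V\). Conversely, every direction of a frame is a \(1\)-subspace of \(V\). Therefore \(\mathcal D(\mathcal S)\) is precisely the set of all directions in \(V\).

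\textbf{Main obstacle.} The only delicate point is making sure the lower-block frames really are frames with their stated direction sets. In particular, the replacements \(\mathcal T_+'\), \(\mathcal Q_{a_0}\) and \(\mathcal Q_i'\) must be in general position. That work is already done in Proposition~\ref{prop:lower_layers_q_gt_2}, so the theorem reduces to the bookkeeping above, and I expect no real difficulty.
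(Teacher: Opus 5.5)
Your proposal is correct and is essentially the paper's own proof: take \(\mathcal S=\mathcal S_{\le3}\cup\mathcal S_{\ge4}\), get connectivity of \(\mathcal S_{\ge4}\) from the shared direction \([e_1]\), attach every lower-layer frame to \(\mathcal S_4\) through a direction in \(V_3\subset V_4\), and read off the full direction set from \(\mathcal S_n\). Your explicit check that the union is a partition, and your remark that \(\mathcal S_{\le3}\) need not be internally connected, are details the paper leaves implicit but which agree with its argument.
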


\begin{proof}
Use Proposition~\ref{prop:lower_layers_q_gt_2} for the lower block
\(\Gr_{\le3}(2,V)\), and use Theorem~\ref{thm:index-k-frame-decomposition}
for each high layer \(\Gr_k(2,V)\), \(k\ge4\).  The high-layer union
\(\mathcal S_{\ge4}\) is connected by the discussion above.  Every lower-layer
frame is adjacent to \(\mathcal S_4\), hence to \(\mathcal S_{\ge4}\).  Thus
the full union
\[
\mathcal S=\mathcal S_{\le3}\cup\mathcal S_{\ge4}
\]
is connected.  Since \(\mathcal S_n\subset\mathcal S\) has full direction set
in \(V_n=V\), we have
\[
\mathcal D(\mathcal S)=\{\text{all directions in }V\}.
\]
\end{proof}

\subsection{The Lower Layers for \(q=2\)}

For \(q=2\), the uniform lower-layer frame construction above is not
available.  The obstruction is concentrated in dimension three, so we handle
that part by an explicit cycle.

Let \(V_3=\langle e_1,e_2,e_3\rangle\).  In \(V_3=\mathbb F_2^3\), consider
the cyclic sequence
\[
\begin{aligned}
C_0=\bigl(&[e_1],\,0,\,e_3,\,e_2,\,e_1,\,e_1+e_3,\,e_1+e_2,\,0,\,
[e_2],\,[e_1+e_3],\\
& e_2+e_3,\,e_1+e_2+e_3,\,e_3,\,[e_1+e_2+e_3]\bigr).
\end{aligned}
\]
This cycle encodes the \(14\) affine planes in \(V_3\): each cyclic
length-three window encodes a distinct affine plane, and all \(14\) affine
planes appear exactly once.  It contains the affine interface segment
\[
[e_1]\to 0
\]
and also the direction--direction segment
\[
[e_2]\to [e_1+e_3].
\]

For general \(n\ge4\), write
\[
V=V_3\oplus W.
\]
For each \(b\in W\), translate \(C_0\) by \(b\), translating affine-point
entries and leaving direction entries fixed.  The translated cycle encodes
exactly the affine planes contained in the affine \(3\)-space \(b+V_3\), with
linear part contained in \(V_3\).  As \(b\) ranges over \(W\), these cycles
cover all affine planes whose linear part lies in \(V_3\).

All translated copies contain the same direction--direction segment
\([e_2]\to [e_1+e_3]\), because direction entries are not translated.  Hence
the translated copies can be glued along this common pair.  We choose the
cuts away from the segment \([e_1]\to0\) in the copy \(C_0\), so the resulting
lower-layer cycle still contains the affine interface segment \([e_1]\to0\).

\subsection{Proof of Theorem~\ref{thm:affine_ucycle}}

We now prove the main affine-plane universal-cycle theorem.

Suppose first that \(q>2\).  By
Theorem~\ref{thm:global_frame_decomposition}, the whole Grassmannian
\(\Gr(2,V)\) admits a connected frame partition whose direction set contains
all directions in \(V\).  Applying the local universal-cycle constructions of
Section~3 to each frame, and then applying the connected gluing principle
(Theorem~\ref{thm:connected_gluing_principle}), gives a universal
triple-window cycle for all affine planes in \(V\).

Now suppose that \(q=2\).  The high layers \(\Gr_k(2,V)\), \(k\ge4\), are
handled by Theorem~\ref{thm:index-k-frame-decomposition}; as explained
above, these layers glue together through common directions.  The lower
block \(\Gr_{\le3}(2,V)\) is handled by the explicit translated cycle
constructed in the preceding subsection.  This lower cycle contains the
interface segment \([e_1]\to0\), while the high-layer cycle contains an
interface involving the direction \([e_1]\) by the local frame construction.
After reversing and translating the high-layer cycle if necessary, the two
interfaces can be matched.  Applying Lemma~\ref{lem:gluing_cycle}, or
equivalently the connected gluing principle, glues the lower cycle to the
high-layer cycle.

Thus in every case there exists a universal triple-window cycle encoding all
affine planes in \(\mathbb F_q^n\).  This proves
Theorem~\ref{thm:affine_ucycle}.

\subsection{Extension to $3$-Subspaces}

Finally, we extend the construction from affine planes to
\(3\)-dimensional subspaces of \(\mathbb F_q^n\), for all \(n\ge4\). The
argument separates the subspaces according to a fixed hyperplane and uses
the affine-plane construction on the complementary part.

\subsubsection{Base Case: $n=4$}

Let $N = \frac{q^4 - 1}{q - 1}$ and let $\alpha$ be a generator of
$\mathbb{F}_{q^4}^\times$. The cyclic sequence
\[
1 \to \alpha \to \alpha^2 \to \cdots \to \alpha^{N-1}
\]
runs through all $1$-subspaces of $\mathbb{F}_{q^4}\cong \mathbb{F}_q^4$.

Since $3$-subspaces of $\mathbb F_q^4$ are hyperplanes, i.e.\
codimension-$1$ subspaces, they are in bijection with $1$-subspaces via
duality. The multiplicative action of $\mathbb F_{q^4}^\times$ generated
by $\alpha$ is transitive on $1$-subspaces, and hence also on
$3$-subspaces. Therefore the sliding windows
\[
\langle \alpha^i,\alpha^{i+1},\alpha^{i+2}\rangle
=
\alpha^i \cdot \langle 1,\alpha,\alpha^2 \rangle
\]
run through all \(3\)-dimensional subspaces. Indeed, the stabilizer of
\(\langle 1,\alpha,\alpha^2\rangle\) under multiplication is
\(\mathbb F_q^\times\): if a larger element stabilized this
\(3\)-subspace, then the subspace would be a vector space over a proper
extension field of \(\mathbb F_q\), which is impossible in dimension \(3\)
inside \(\mathbb F_{q^4}\).
Thus this sequence forms a universal cycle whose direction set consists of
all \(1\)-subspaces. It also contains the consecutive independent pair
\([1]\to[\alpha]\), which will serve as the interface property in the
induction.

\subsubsection{Inductive Step}

Assume the statement holds for $n-1 \ge 4$, and write
\[
\mathbb{F}_q^n = V_{n-1} \oplus \langle e_n \rangle.
\]
Every $3$-subspace is either contained in $V_{n-1}$ or not.
We construct cycles on these two parts separately and then glue them along
a common pair of linear directions.

\medskip
\noindent
\textbf{(i) Subspaces contained in $V_{n-1}$.}
By the induction hypothesis, these subspaces have a universal cycle whose
direction set contains all \(1\)-subspaces of \(V_{n-1}\). We use the
induction in the slightly stronger form supplied by the base case: this
cycle also contains a consecutive pair of entries lying in \(V_{n-1}\).

\medskip
\noindent
\textbf{(ii) Subspaces not contained in $V_{n-1}$.}
Each such subspace determines a unique affine plane in \(V_{n-1}\). It may
be written in the form
\[
H = \langle a + e_n, u, v \rangle,
\qquad u,v \in V_{n-1},
\]
and the associated affine plane is
\[
a + \langle u,v \rangle \subset V_{n-1}.
\]
This gives a bijection between such \(3\)-subspaces and affine planes in
\(V_{n-1}\). By Theorem~\ref{thm:affine_ucycle}, they admit a universal
cycle whose direction set consists of all \(1\)-subspaces of \(V_{n-1}\).

\medskip
\noindent
\textbf{Gluing.}
Under this bijection, we identify the affine-plane cycle with a cycle on
$3$-subspaces by sending $[u]\mapsto [u]$ and $v\mapsto [v+e_n]$.

Thus all entries in both cycles are \(1\)-subspaces of
\(\mathbb F_q^n\). Those contained in \(V_{n-1}\) are the linear
directions, while those not contained in \(V_{n-1}\) represent affine
directions.

The outer cycle is obtained from the universal cycle on affine planes. By
the local construction on three-frames (Section~3), its building blocks
have the form
\[
[u_i]\to [u_{i+1}] \to w \to [u_{i+2}] \to [u_{i+3}],
\]
so this cycle contains consecutive pairs of linear directions
\[
[u]\to [v], \qquad [u],[v]\subset V_{n-1}.
\]
This property is preserved under the above identification, since linear
entries are unchanged.

Thus both cycles contain consecutive pairs of entries lying in
\(V_{n-1}\), i.e. consecutive pairs of linear directions. Since such a pair
occurs inside a window encoding a subspace, the two directions are linearly
independent. The group $\mathrm{GL}(V_{n-1})$ acts transitively on ordered
pairs of linearly independent $1$-subspaces, so we may apply a linear
automorphism of $V_{n-1}$ (extended to $V_n$) so that a common pair
$([u],[v])$ appears in both cycles.

Cutting both cycles at this segment and applying
Lemma~\ref{lem:gluing_cycle}, we obtain a universal cycle on all
$3$-subspaces of $\mathbb{F}_q^n$. This completes the induction and proves
Theorem~\ref{thm:3_subspace_ucycle}.

\bibliographystyle{amsalpha}  
\bibliography{reference}     

\end{document}